\documentclass[oneside,reqno]{amsart}
\usepackage{algpseudocode}
\usepackage{graphicx}
\usepackage{caption}
\usepackage{amsmath,amssymb}
\usepackage{empheq}
\usepackage{color, tikz, comment}
\usepackage[utf8]{inputenc}
\usepackage{mathtools}
\usepackage{float}
\usepackage{subfigure}
\usepackage[ruled,linesnumbered,lined,boxed,commentsnumbered]{algorithm2e}

\def\one{\mbox{1\hspace{-4.25pt}\fontsize{12}{14.4}\selectfont\textrm{1}}}
\vfuzz2pt 
\hfuzz2pt 

\newtheorem{thm}{Theorem}[section]
\newtheorem{cor}[thm]{Corollary}
\newtheorem{lem}[thm]{Lemma}
\newtheorem{prop}[thm]{Proposition}
\newtheorem{assump}[thm]{Assumption}
\theoremstyle{definition}
\newtheorem{defn}[thm]{Definition}
\theoremstyle{remark}
\newtheorem{rem}[thm]{Remark}
\numberwithin{equation}{section}
\newcommand{\defeq}{\vcentcolon=}

\renewcommand{\parallel}{\mathrel{/\mkern-5mu/}}
\makeatletter
\newcommand{\notparallel}{%
  \mathrel{\mathpalette\not@parallel\relax}%
}
\newcommand{\not@parallel}[2]{%
  \ooalign{\reflectbox{$\m@th#1\smallsetminus$}\cr\hfil$\m@th#1\parallel$\cr}%
}
\makeatother

\theoremstyle{plain}

\newcommand{\norm}[1]{\left\Vert#1\right\Vert}

\newcommand{\loc}{\mathrm{loc}}



\begin{document}

\title[Bayesian inversion for a class of tumor growth models]{A unified Bayesian inversion approach for a class of tumor growth models with different pressure laws}

\author{Yu Feng, Liu Liu, Zhennan Zhou}%

\address{Yu Feng: Beijing International Center for Mathematical Research, Peking University, No. 5 Yiheyuan Road Haidian District, Beijing, P.R.China 100871}
\email{fengyu@bicmr.pku.edu.cn}
\address{Liu Liu: Department of Mathematics, The Chinese University of Hong Kong, Lady Shaw Building, Ma Liu Shui, Hong Kong, China}
\email{lliu@math.cuhk.edu.hk}
\address{Zhennan Zhou: Beijing International Center for Mathematical Research, Peking University, No. 5 Yiheyuan Road Haidian District, Beijing, P.R.China 100871}
\email{zhennan@bicmr.pku.edu.cn}
\date{\today}

\subjclass[2020]{35R30, 62F15, 65M32, 92-10}%

\keywords{Bayesian inversion, tumor growth models, asymptotic-preserving}%
\thanks{}

\maketitle

\begin{abstract}
In this paper, we use the Bayesian inversion approach to study the data assimilation problem for a family of tumor growth models described by porous-medium type equations. The models contain uncertain parameters and are indexed by a physical parameter $m$, which characterizes the constitutive relation between density and pressure. Based on these models, we employ the Bayesian inversion framework to infer parametric and nonparametric unknowns that affect tumor growth from noisy observations of tumor cell density. We establish the well-posedness and the stability theories for the Bayesian inversion problem and further prove the convergence of the posterior distribution in the so-called incompressible limit, $m \rightarrow \infty$. Since the posterior distribution across the index regime $m\in[2,\infty)$ can thus be treated in a unified manner, such theoretical results also guide the design of the numerical inference for the unknown. We propose a generic computational framework for such inverse problems, which consists of a typical sampling algorithm and an asymptotic preserving solver for the forward problem. With extensive numerical tests, we demonstrate that the proposed method achieves satisfactory accuracy in the Bayesian inference of the tumor growth models, which is uniform with respect to the constitutive relation.   
\end{abstract}


\section{Introduction}
In recent years, mathematical modeling has become an increasingly important tool in tumor research. By using mathematical models to simulate tumor growth and evolution, one can better understand the underlying mechanisms that drive tumor progression. However, most existing work on mathematical models in tumor research is limited to formulation and analysis, which means they are designed to predict how a tumor will develop given certain initial conditions and parameters. And it needs to be emphasized that due to the limitations in understanding the tumor growth mechanism, various models exist in the current literature, such as stochastic models based on reaction-diffusion equations \cite{greenspan1976growth}, phase field models based on Cahn-Hilliard equations \cite{garckehilliard}, and mechanical models based on porous media equations \cite{perthame2014hele}. We suggest the following textbooks \cite{cristini2010multiscale,cristini2017introduction} and review articles \cite{araujo2004history,byrne2006modelling,roose2007mathematical,lowengrub2009nonlinear} as references for interested readers.

As tumor growth is a rather complex biological process, it develops in distinguishable phases and is affected by various factors. Many mathematicians are devoted to incorporate these elements in modeling and analyze their individual and synergistic effects, such as nutrient concentration \cite{feng2023tumor,jacobs2022tumor}, degree of vascularization \cite{cristini2003nonlinear,pham2018nonlinear}, cell reproduction and apoptosis \cite{friedman2001symmetry, friedman2008stability}, chemotaxis \cite{lu2020complex,lu2022nonlinear}. However, the development of the model library also raises an alarming issue, the model identification and the parameter calibrations in the equations are becoming significantly more challenging as well. 

The presence of unknown parameters and the difficulty of validating models against experimental data are major obstacles in the practical application of these tumor models. Therefore, studying the inverse problem in tumor growth has both theoretical and practical values. For example, by conducting model selection and parameter inferences, researchers can gain insights into the underlying mechanisms driving tumor growth and progression \cite{falco2023quantifying}. Also, The inverse problem can be used to optimize treatment strategies for individual patients by predicting the efficacy of different treatments \cite{kostelich2011accurate, lipkova2019personalized}. 

The study on the inverse problem for tumor growth has a shorter history compared to the forward modeling but has received significant attention in recent years. In the context of tumor growth modeling, the inverse problem aims to estimate the unknown parameters in the model (e.g., proliferation rates, diffusion coefficients, etc.) that govern the growth of tumors via the observed data such as tumor images or size measurements \cite{falco2023quantifying,kahle2020parameter,kahle2019bayesian,lipkova2019personalized,subramanian2020did}. Moreover, various methodologies have also been developed for concerning the inverse problem in tumor growth models, such as Tikhonov regularization method \cite{kahle2020parameter,liu2014patient}, Bayesian inference \cite{falco2023quantifying,lipkova2019personalized}, Machine learning algorithms \cite{selvanambi2020lung,zhang2019spatio} and so on.

In particular, among the methodologies above, Bayesian inference has emerged as a promising approach for solving the inverse problem in tumor growth modeling \cite{falco2023quantifying,kahle2019bayesian,kostelich2011accurate,lipkova2019personalized}. This approach involves combining prior knowledge about the unknown model parameters with likelihood functions that capture the probability of observing the available data.  Bayesian methods have been used to estimate parameters in various tumor growth models, such as reaction-diffusion model \cite{ lipkova2019personalized}, phase field model \cite{kahle2019bayesian}, and mechanical model (degenerate diffusion model) \cite{falco2023quantifying}. Additionally, Bayesian approaches can be combined with Uncertainty Quantification (UQ) methods to generate probabilistic predictions of tumor growth dynamics, providing insight into the uncertainty associated with the estimated model parameters and guiding us in assessing the reliability and robustness of the estimated parameters and their predictions.

Despite the progress made in inverse problems and UQ studies for tumor growth, many challenges remain. In particular, due to the diversity and hierarchy in the model library, it becomes inefficient to design tailored treatments for specific models.

In this paper, we consider the inverse problem of a family of mechanical models for tumor growth described by porous-medium type equations. The tumor cell density evolves as follows
\begin{equation} \label{eqn:pme for short}
    \frac{\partial}{\partial t}\rho - \nabla\cdot\left(\rho \nabla p \right)=g(x,t,\rho), \quad p=\frac{m}{m-1}\rho^{m-1}, \quad m\ge 2.
\end{equation}
Here, $\rho$ denotes the cell density, $p$ denotes the pressure and $g$ is the growth factor. For simplicity, we take $g=h(x) \rho$, where $h$ is the growth rate function manifesting the local condition of the growing environment.
 We can index these models according to the physical parameter $m$, which specifies the constitutive relation between density and pressure. 
 
 Such models share the same physical laws but obey different constitutive relations, a phenomenon that is reminiscent of kinetic models containing different collision kernels or fluid mechanical models with different pressure relations \cite{friedlander2002handbook, villani2002review}. It is worth mentioning that the physical parameter is also similar to the scaling parameter $\varepsilon$ in multiscale models \cite{nolen2012multiscale, weinan2011principles}, but they also differ significantly, since as $m$ varies, the nonlinearity structure changes as well, which cannot be recovered by rescaling.

Without loss of generality, we consider two types of unknowns in the inverse problem: the non-parametric and the parametric ones. The former refers to unknown functions without additional assumptions on their functional forms, such as the growth rate function $h$, and the latter refers to finite-dimensional parameters associated with unknowns in some prescribed forms, such as shape parameters specifying the initial profile. 

In this work, we study the Bayesian inversion problem for model \eqref{eqn:pme for short} indexed by $m  \in I= [2,\infty)$, and aim to provide a unified computation framework for such inverse problems. To be more precise, the numerical method is supposed to not only produce stable and reliable parameter inference for each model with fixed $m$, but also we expect that the numerical results should exhibit uniform accuracy across the index regime $m\in I$. In particular, it is necessary to rule out the possibility that  the numerical performance degenerates as $m\rightarrow \infty$.

From the Bayesian point of view, we seek a probabilistic solution to the inverse problem in the form of a posterior distribution $\mu_m^y$,  where $y$ denotes the observed data (which will be omitted in this section) and $m$ is the physical index. However, since the posterior distribution is often formidably high dimensional (or even possibly infinite-dimensional), sampling tools are applied to obtain a statistical presentation of the distributions. In this sense, proposing a unified computational framework for these inverse problems boils down to designing a numerical method that can efficiently sample the collection of posterior distributions $\{ \mu_m \}_{m \in I}$. 

Our analysis of the Bayesian problems investigates the properties of the posterior distributions and thus provides theoretical foundations and insights for constructing the numerical scheme.  On one hand, we establish the well-posedness theory for the Bayesian inversion problem with a given index $m$; on the other hand, we show that the posterior distributions converge in the limit $m\rightarrow \infty$. These results strongly yield a key observation: the probability measures in the set  $\{ \mu_m \}_{m\in I}$ do not differ much besides being absolute continuous with respect to the prior distribution. 

In light of this, most prevailing numerical sampling strategies, such as Markov Chain Monte Carlo (MCMC) methods, can be adopted here. Notice that  when generating each sample a typical numerical scheme involves computing the likelihood function, which requires efficiently computing the forward problem. Thus a reliable numerical solver for the tumor growth models, which achieves  correct approximations for $m\in I$, is desired. Thanks to the previous works \cite{jianguo2018,jianguo2021}, an asymptotic preserving numerical scheme has been constructed, which can accurately capture the boundary moving speed in the limit $m\rightarrow \infty$. Hence, such numerical schemes can readily be integrated into our numerical method for the inverse problem.

To sum up, the unified computational method for the Bayesian inversion problems to a family of tumor growth models consists of a  plain MCMC method and an asymptotic preserving numerical solver for the forward problem. We highlight that our theoretical analysis only indicates the minimal requirements for treating the collection of posterior distributions, and it is certain that more advanced sampling techniques can be applied to further improve the numerical performance. 

We that compared with other prevailing inverse problem approaches, the Bayesian approach avoids finding the estimator of the inverse or solving the optimization problem  with a regularized functional, thus it offers plenty of flexibility in dealing with different models with the same approach.  In a recent paper \cite{falco2023quantifying}, the authors also adopt the Bayesian inversion method to compare different tumor growth models and confirm that the pme-based models \eqref{eqn:pme for short} are more reasonable in the presence of tissue collision.

This paper is organized as follows: in Section \ref{sec: Preliminary}, we introduce a family of tumor growth models described by the porous medium type equations, and set up the Bayesian inverse problem for these models, and present the unified numerical method. In Section \ref{sec: section3}, we establish the well-posedness and stability theory for the Bayesian inversion problems and characterize the convergence behavior of the posterior distributions in the incompressible limit, which serve as the theoretic foundation for the numerical scheme.  The numerical experiments are presented in Section \ref{sec: Numerical Experiments} to verify our results in theoretical analysis. Lastly, the conclusion and future work is addressed. 

\section{Preliminary}
\label{sec: Preliminary}
In this section, we begin with introducing a family of  tumor growth models indexed by a physical parameter $m$, which are porous medium type equations and possess a Hele-Shaw-type asymptote as the index $m$ tends to infinity. Then, we formulate the inverse problems with respect to the above models and employ a Bayesian framework to quantify parametric and nonparametric  unknowns in the models based on some noisy observation data.  In the last part of this section, we establish the algorithm for the inverse problem, which works for an extensive range of index $m$ and can capture the asymptotic limit of the solutions.

\subsection{A family of deterministic tumor growth model}
\label{sec:A deterministic tumor growth model}
In the first part, we adopt and introduce a family of well-studied mechanical tumor growth models that are porous medium type equations and are indexed by a physical parameter $m$ specifying the constitutive relation between the pressure and the density (see \cite{benilan1996singular}, section 3 ). In each mechanistic model, i.e., fixing a value of the index $m$, we consider the evolution of the tumor cell density over a specified domain. Moreover, as the physical index $m$ tends to infinity, such equations have natural Hele-Shaw type asymptotes. For a complete introduction to the model, we begin with introducing the notation and physical parameters.

Let $\Omega$ be a bounded open set in $\mathbb{R}^2$, and we consider the growth of the tumor in this region. For $T>0$, define $Q_T\defeq\Omega\times(0,T)$, and $\Sigma_T\defeq\partial\Omega\times(0,T)$. Let $\rho(x,t)$ denote the cell population density, with the cells transported by a velocity field $v$ and the cell production governed by the growth function $g(x,t,\rho)$. Then the continuity of mass yields 
\begin{equation}
\label{eqn:cts of mass}
    \frac{\partial}{\partial t}\rho+\nabla\cdot\left(\rho v\right)=g(x,t,\rho).
\end{equation}
We further assume the velocity $v$ is governed by Darcy's law $v=-\nabla p$, where the pressure $p$ further satisfies the power law: $p=\frac{m}{m-1}\rho^{m-1}$, with $m (\geq 2)$ meanwhile acts as the index for the family of problems. Then the continuity of mass equation \eqref{eqn:cts of mass} can be further written into:
\begin{equation}
\label{eqn:pme density eqn}
    \frac{\partial}{\partial t}\rho-\nabla\cdot\left(\rho \nabla p\right)=g(x,t,\rho).
\end{equation}
Moreover, we employ the set
\begin{equation}
\label{eqn:set D}
    D(t)=\left\{\rho(x,t)>0\right\}
\end{equation}
to denote the support of $\rho$. Physically, it presents the tumoral region at time $t$. Then the tumor boundary expands with a finite normal speed $s=-\nabla p\cdot {\bf n}\vert_{\partial D}$, where ${\bf n}$ stands for the outer normal vector on the tumor boundary. 
Observe the fact that the expression of $p$ enables the flux $\nabla\cdot(\rho \nabla p)$ equivalently written as $\Delta \rho^m$. On the other hand, for the boundary condition, we assume $\rho$, so as $p$, vanishes on $\Sigma_T$. Besides, let $f(x)$ be the initial data, and it can generally be an arbitrary function that takes the value in $[0,1]$. However, in practice, we focus on a specific class of initial data, which can simplify the regime. We leave the detailed explanation for later.  

With the above assumptions, for any $m\geq 2$, the evolution of the tumor cell density satisfies the following system:
\begin{subequations}
\label{eqn:Pm}
\begin{empheq}[left={(P_m)\empheqlbrace\,}]{align*}
&\rho_t=\Delta\rho^{m}+g(x,t,\rho)\qquad&\text{on}\quad Q_{T},\\
&\rho= p = 0 \qquad&\text{on}\quad\Sigma_T,\\
&\rho(x,0)=f(x)\qquad&\text{on}\quad \Omega.
\end{empheq}
\end{subequations}
For each fixed $m\geq 2$, the system $(P_m)$ possesses a unique solution (see Theorem \ref{thm:exist_Pm}) under proper assumptions. In this work, we consider the growth function in the following form
\begin{equation}
\label{eqn:nonlocalFKPP g}
g(x,t,\rho)= h(x)\rho, \quad 0<h(x)\in L^{\infty}(\Omega).
\end{equation}
The expression in \eqref{eqn:nonlocalFKPP g} can be understood as the cell production is determined by the cell density and a growth rate function $h(x)$, which reflects the tumor micro-environment that may affect cell growth, such as the distribution of nutrients. Moreover, we consider the development of an early-stage tumor so that the cell apoptosis is neglectable, and $h(x)$ can be reasonably assumed to be a positive function. 

Many research (e.g. \cite{david2021free, dou2022modeling, guillen2022hele, kim2016free, kim2018porous, perthame2014hele}) indicate that the porous medium type functions have a Hele-Shaw type asymptote as the power $m$ tends to infinity. In particular, the solution of $(P_m)$ tends to the solution of (see Theorem \ref{thm:L1 convergence thm} for precise description):
\begin{subequations}
\label{eqn:P_infty}
\begin{empheq}[left={(P_\infty)\empheqlbrace\,}]{align*}
&\rho_t=\Delta p_{\infty}+g(x,t,\rho)\qquad&\text{on}\quad Q_{T},\\
&0\leq\rho\leq1,\quad p_{\infty}\geq 0,\quad(\rho-1)p_{\infty}=0\qquad&\text{on}\quad Q_{T},\\
&p_{\infty}= 0 \qquad&\text{on}\quad\Sigma_T,\\
&\rho(x,0)=f(x)\qquad&\text{on}\quad \Omega,
\end{empheq}
\end{subequations}
if the initial data $f$ is provided to be a characteristic function $f=\chi_{D_0}$, where $D_0$ be a bounded subset of $\Omega$. That means the initial density is saturated in the set $D_0$ and vanishes outside. Actually, $(P_m)$ converges to $(P_\infty)$ for more general initial data (see Theorem \ref{thm:L1 convergence thm}). However, the prescribed ones can simplify the regime and are enough for our purpose. And it is worth mentioning that in the Hele-Shaw model $(P_{\infty})$, if the initial data is in the form of a characteristic function, then the solution remains in the form of characteristic function consistently, i.e., $\rho(x,t)=\chi_{D(t)}$. We refer to these solutions as patch solutions. 
Furthermore, for patch solutions and $g(x,t,\rho)$ given by \eqref{eqn:nonlocalFKPP g} with $h(x)>0$, the limit pressure $p_{\infty}(x,t)$ solves the following elliptic problem in the tumoral region $D(t)$ for each time $t$:
\begin{align}
   -\Delta p_{\infty}&=h(x)\quad\text{in}\quad D(t),\\
   p_{\infty}&\geq 0\quad\text{in}\quad D(t),\\
   p_{\infty}&=0\quad\text{on}\quad \partial D(t).
\end{align}
And the tumor boundary propagates with a finite normal speed $s=-\nabla p_{\infty}\cdot {\bf n}\vert_{\partial D}$.

\subsection{Set up for the inverse problem}
In this section, we set up the inverse problem based on the models established in the previous section. 

For each $m\geq 2$, consider the model $(P_m)$ with $g(x,t,\rho)$  given by \eqref{eqn:nonlocalFKPP g}. And the initial data in the form of $f=f_0^{z}(x)$, where $f_0$ is a given characteristic function $\chi_{D_0}$, with $D_0\subset \mathbb{B}_1(0)$, i.e., a subset of the unit disk centered at the origin. And $z$ can generally be any parameters for the initial data with a prescribed form, such as the center and the scaling (or size). Then the problem $(P_m)$ can be further written as:
\begin{subequations}
\label{eqn:Pm'}
\begin{empheq}[left={(P_m')\empheqlbrace\,}]{align*}
&\rho_t=\Delta\rho^{m}+h(x)\rho\qquad&\quad\text{on}\quad Q_{T},\\
&\rho=p= 0 \qquad&\text{on}\quad\Sigma_T,\\
&\rho(x,0)=f_0^{z}(x)\qquad&\text{on}\quad \Omega.
\end{empheq}
\end{subequations}
Our primary interest is identifying two types of unknowns in the problem $(P_m')$ from some noisy observations that will be specified later. The first unknown type collects the unknowns from the parametric form of the initial data. This type of unknowns constitute a simple finite-dimension vector. While the second kind of unknown is treated in a non-parametric way, such as the growth rate function $h(x)$. For concision, we collect them in a single variable $u$ as
$$u=(z,h(x)).$$
Given  $\hat{u}=(\hat{z},\hat{h}(x))$, $(P_m')$ has a unique solution (see Theorem \ref{thm:exist_Pm}), and we denote it as $\rho^{(m)}:=\rho^{(m)}(\hat{u})$. For the observations, we consider data obtained from snapshots of the tumor at several time instances, which are slightly polluted by noises. We assume that the noises cannot be directly measured but their statistical properties are known. In the work, the noises are modeled as Gaussian random variables which are independent of the unknown parameters. Mathematically, we generate the noisy observation with respect to $\rho^{(m)}$ as follow: 
\begin{enumerate}
    \item Fix a sequence of smooth test function $\{\xi_k\}_{k=1}^{K}$  with $supp(\xi_k)\subseteq\Omega$ for any $1\leq k\leq K$. 
    \item Fix $T>0$, and let $\{t_j\}_{j=1}^{J}$ (with some fixed $J\in\mathbb{N}$) be an increasing sequence in the time interval $[0,T]$.
    \item We model the noisy observations using a set of linear functional $\{l_{j,k}\}_{j=1,k=1}^{j=J,k=K}$ of the solution $\rho^{(m)}$. Specifically, we assume that $l_{j,k}:f\mapsto l_{j,k}(f)\in \mathbb{R}$ is given by
       \begin{equation}
       \label{eqn:linear-functional}
       l_{j,k}(f)=\int_{\Omega}\xi_k(x)f(x,t_j)dx.
       \end{equation}
    Then the noisy observations, denote by $\{y_{j,k}\}_{j=1,k=1}^{j=J,k=K}$, $y_{j,k}\in \mathbb{R}$, are expressed as
\begin{equation}
\label{eqn:single observation}
    y_{j,k}^m=l_{j,k}(\rho^{(m)})+\eta_{j,k},\qquad 1\leq j\leq J,\quad 1\leq k\leq K, 
\end{equation}
where $\eta_{j,k}\sim N(0,\sigma^2_{j,k})$, i.e., the standard normal distribution with mean $0$ and variance $\sigma^2_{j,k}>0$.
\end{enumerate}

For concision, let data space $Y\defeq \mathbb{R}^{J K}$. Define the noise vector $\eta\coloneqq(\eta_{j,k})\in Y$ and the observation vector $y\coloneqq(y_{j,k})\in Y$ with $1\leq j\leq J$ and $1\leq k\leq K$.
Then \eqref{eqn:single observation} can be written in the vector form:
\begin{equation}
\label{eqn:noisy observation for m}
    y=\mathcal{G}^m(\hat{u})+\eta,
\end{equation}
where the forward operator $\mathcal{G}^m(\hat{u})$ is the composition of the solution operator $\mathcal{F}^m:=\hat{u}\mapsto \rho^{(m)}(\hat{u})$ and the observation functionals $\rho^{(m)}\mapsto l_{j,k}(\rho^{(m)})$, with $1\leq j\leq J$ and $1\leq k\leq K$. And the noise vector 
\begin{equation}
\label{eqn:noise}
   \eta\sim N(0,\Gamma), 
\end{equation}
where the covariance matrix $\Gamma$ is a $JK$ by $JK$ diagonal matrix with diagonal elements given by $\sigma^2_{j,k}>0$. 

For the inverse problem, we assume that $m$ can be measured directly from experiment data, and we consider the following inverse problem: given $m$ and the noisy data $y$, we aim to infer the unknown $\hat{u}$ by \eqref{eqn:noisy observation for m} in a probability sense. 

On the other hand, it is worth emphasizing that we aim to solve for a family of inverse problem indexed by $m$, which takes value in a semi-bounded domain $[2, \infty)$. And thus, it is inevitable to discuss the solution behavior as $m$ is approaching infinity. 

As explained previously, the solution to $(P_m')$ converges to the solution of the following one
\begin{subequations}
\label{eqn:P_infty'}
\begin{empheq}[left={(P_\infty')\empheqlbrace\,}]{align*}
&\rho_t=\Delta p_{\infty}+h(x)\rho\qquad&\text{on}\quad Q_{T},\\
&0\leq\rho\leq 1,\quad p_{\infty}\geq 0,\quad(\rho-1)p_{\infty}=0\qquad&\text{on}\quad Q_{T},\\
&p_{\infty}= 0 \qquad&\text{on}\quad\Sigma_T,\\
&\rho(x,0)=f^z_0(x)\qquad&\text{on}\quad \Omega.
\end{empheq}
\end{subequations}
Let $\rho^{(\infty)}(\hat{u})$ be the solution to $(P_{\infty}')$, with $(z,h(x))$ replaced by $(\hat{z},\hat{h}(x))$, then one can define $(\mathcal{F}^\infty, \mathcal{G}^\infty)$ in the same way as $(\mathcal{F}^m, \mathcal{G}^m)$. More precisely, each component of the observation vector $y$ is given by
\begin{equation}
y_{j,k}=l_{j,k}\circ\mathcal{F}^\infty(\hat{u})+\eta_{j,k}:=\mathcal{G}_{j,k}^\infty(\hat{u})+\eta_{j,k}.
\end{equation}
In the forward problem, one has $\rho^{(m)}(\hat{u})\rightarrow\rho^{(\infty)}(\hat{u})$ in proper function space (see Theorem \ref{thm:L1 convergence thm}). For the inverse problem, since we aim to design a numerical method that works for a large range of physical index $m$, we not only require that the approach is uniformly well-posed for $m\in [2,\infty)$, but also we expect the numerical performance does not degenerate as $m$ approaches infinity. 

We employ a Bayesian approach for the inverse problem to identify the unknown factor $\hat{u}$. The Bayesian inversion is a method for solving inverse problems by using Bayes' theorem to update our beliefs about the unknown parameters by leveraging the observed data. We take the identification for problem $(P_m')$ as an example, and the identification for problem $(P_\infty')$ can be done similarly. 

To begin with, we treat the unknown $\hat{u}$ as a random variable. To distinguish with the deterministic $\hat{u}$, we use the notation $u$ for the random variables instead. Recall that $u$ contains two types of components. For the parameter $z$, we assume it generates from a uniform distribution and denote the measure as $\mu_0^z$. While for the random function $h(x)$, we assume it can be presented as:
\begin{equation*}
    h(x)=h_0(x)+\Sigma_{j=1}^{\infty}\gamma_j\zeta_j\phi_j,
\end{equation*}
where $h_0(x)$ is a determined positive $L^{\infty}$ function,  $\gamma=\{\gamma_i\}_{i=1}^{\infty}$ is a deterministic sequence of scalars, $\phi=\{\phi_i\}_{i=1}^{\infty}$ is a set of basis functions for a certain function space, and $\zeta=\{\zeta_i\}_{i=1}^{\infty}$ be an i.i.d. random sequence with $\zeta_i\sim N(0,1)$, thus we have defined the prior distribution for $h$, which we denote by $\mu_0^h$  (see, e.g. \cite{dashti2017bayesian}); Therefore, $u$ has a \emph{priori measure} $\mu_0:=\mu_0^z\times\mu_0^h$, since $z$ and $\zeta$ (so as $h(x)$) are independent. We leave the precise description of $\mu_0$ to Section \ref{sec:Generation and prior measure for the random variables}.

The posterior distribution obtained from Bayesian inversion represents our beliefs about the parameters and their uncertainty after data assimilation. We aim to derive the posterior distribution with respect to the noisy observation data $y$, which we denote as $\mu_m^y$.  The classical theory of Bayes' rule yields the following Radon-Nikodym relation \cite{dashti2017bayesian} with respect to $\mu_m^y$ and $\mu_0$:
\begin{equation}
    \frac{d\mu_m^{y}}{d\mu_0}(u,y)=\frac{1}{Z_{m}(y)}\exp{(-\Phi_m(u,y))},\quad
    Z_{m}(y)=\int_{X}\exp{(-\Phi_m(u,y))}d\mu_0(u),
    \label{RN1}
\end{equation}
where the potential function $\Phi_m(u,y)$ takes the form of:
\begin{equation}
    \Phi_m(u,y)=\frac{1}{2}\vert\Gamma^{-1/2}(\mathcal{G}^m(u)-y)\vert^2-\frac{1}{2}\vert\Gamma^{-1/2}y\vert^2.
    \label{RN2}
\end{equation}
Recall that $\mathcal{G}^m$ is the forward operator as in \eqref{eqn:noisy observation for m}, and $\Gamma$ is the covariance matrix for the observation noise. And we can define $(\Phi_{\infty},Z_{\infty}, \mu_{\infty}^y)$ analogously.

We devote ourselves to the following three main targets in the following:
\begin{enumerate}
    \item Show that the Bayesian inversion problem is well-posed to all $m\geq 2$.
    \item Show that the posterior distribution $\mu_m^y$ converges as $m$ tends to infinity, in the sense of Hellinger distance (see Definition \ref{def:H-distance}).
    \item With the theoretical understanding above, design a numerical method for the inverse problem that works uniformly well for $m\in [2,\infty)$.
\end{enumerate}
We close this section by presenting the numerical method in the next subsection and leaving the first two targets to the latter chapters.

\subsection{Algorithm for the inverse problem}
In this section, we establish a unified computational method for the family of tumor growth models in the Bayesian inversion framework. Recall that in the tumor growth model 
\begin{equation}
\label{model_eqn}
\left\{ 
\begin{array}{ll}
\partial_t \rho = \Delta \rho^m+ h( \mathbf{x})\rho, \qquad \mathbf{x} \in \Omega, \\[4pt]
\rho(\mathbf{x},0) = \rho_0(\mathbf{x}+\mathbf{z}), 
\end{array}
\right.
\end{equation}
we aim to infer the unknown $u^\star=(z,h(x))$ based on the noisy observation data 
\begin{equation*}
  y = \mathcal{G}^m(u^{\star}) + \eta.   
\end{equation*}
Here, the forward operator $\mathcal{G}^m=l\circ\mathcal{F}^m$ is the composition of the solution operator $\mathcal{F}^m$ and the linear functional $l$, defined in \eqref{eqn:linear-functional}-\eqref{eqn:noisy observation for m}, and $\eta\sim N(0,\Gamma)$ as in \eqref{eqn:noise}  denotes the noise.

In the Bayesian inversion, given the prior distribution $\mu_0$ and the noisy observation data $y$, the distribution law of the unknown $u$ is given by the posterior distribution $\mu_m^y$, and by \eqref{RN1} and \eqref{RN2} we know that
\begin{equation}
    \frac{d\mu_m^{y}}{d\mu_0}(u) \propto \exp{\left(-\frac{1}{2}\vert\Gamma^{-1/2}(\mathcal{G}^m(u)-y)\vert^2\right)}.
    \label{RN3}
\end{equation}
Since the normalization constant in \eqref{RN1} is not feasibly computable, and the posterior distribution $\mu_m^y$ or its finite-dimensional approximations are of complicated landscapes, we seek numerical sampling of $\mu_m^y$ based on \eqref{RN3} rather than a direct computation. 

It is worth emphasizing that our primary goal is to construct a numerical method that can efficiently sample $\mu_m^y$ for arbitrary $m \ge 2$, hence two major challenges are in the way. On the one hand, the collection of measures $\mu_m^y$ needs to be investigated such that the criterion for the choice of the sampler can be established. On the other hand, during the sampling process, one needs to repeatedly solve  $\mathcal{G}^m(u)$, which evolves solving the PDE model \eqref{model_eqn}, and thus a numerical solver that works for all $m \ge 2$ is desired.

Our proposed method consists of two main ingredients: a plain MCMC method and an asymptotic-preserving (AP) numerical solver for the forward problem. We elaborate on the designing principle and implementation details in the following. 

In Section \ref{sec: section3}, we will give theoretical proof that the posterior distribution $\mu_m^y$ is well-posed and stable for each $m$ and further show that it converges as $m\to\infty$. This guarantees that the posterior distribution behaves as a Cauchy sequence (refer to Theorem \ref{thm:convergence of posterior}) so that it does not vary dramatically as $m$ increases. Due to the similarity among the posterior distributions with different $m$, a standard sampling method would be sufficient to accomplish the task, hence we choose the plain MCMC approach and briefly review it below. More advanced sampling techniques will be considered in future work.

For simplicity, we employ a typical MCMC algorithm called the Metropolis-Hastings (MH) that constructs a Markov chain by accepting or rejecting samples extracted from a proposed distribution and is widely used in Bayesian inversion \cite{dashti2017bayesian}. 


Now, we are ready to lay out our main algorithm. Here, the covariance $\Gamma^u$ determines the Markov kernel that generates sampling proposals.

\begin{algorithm*}[htb]
\caption{Metropolis-Hastings MCMC}
Assign a proper $u_0$.

Generate the next proposal $ u' \sim \mathcal{N}(u_k, \Gamma^{u})$, where $\Gamma^{u}$ is a given diagonal with each diagonal element $\Gamma^{u}_{ii}>0$.  \\[2pt]

Calculate the acceptance probability $$\displaystyle\alpha(u',u_k) = \min\Bigl\{1, \frac{ L(y|u')\mu_0(u')}{ L(y|u_k) \mu_0(u_k)}\Bigr\}.$$ Here, $\mu_0$ stands for the prior distribution, the likelihood function $$\displaystyle L(y|u) \propto \exp{\left(-\frac{1}{2}\vert\Gamma^{-1/2}(\mathcal{G}^m(u)-y)\vert^2\right)}, $$ and to obtain $\mathcal{G}^m(u)$, one needs to solve the PDE model \eqref{model_eqn} and calculate the observations \eqref{eqn:linear-functional}. \\[2pt]

Update  as $u_{k+1} = u'$ with probability $\alpha(u', u_k)$, otherwise 
set $u_{k+1} = u_k$. 
\end{algorithm*}

We observe  that in the step of calculating the likelihood and the acceptance rate, the quantity $\mathcal{G}^m(u)$ is computed by calling the forward solver. Hence, a unified  Bayesian inversion approach for such a class of tumor growth models is not completed without an efficient and robust forward solver that works for all $m \ge 2$.  

With the constitutive law of $p(\rho) = \frac{m}{m-1}\rho^{m-1}$, both the nonlinearity and the degeneracy in diffusion bring significant challenges in numerical simulations. An asymptotic-preserving (AP) scheme that can accurately capture the boundary moving speed in the limit $m\to\infty$ is necessary. Despite the above challenges, thanks to the previous work \cite{jianguo2018,jianguo2021}, we adopt the AP scheme developed there as our forward solver and briefly summarize it below.

In \cite{jianguo2018}, a numerical scheme based on a novel prediction correction reformulation that can accurately approximate the front propagation has been developed. The authors show that the semi-discrete scheme naturally recovers the free boundary limit equation as $m \to \infty$. With proper spatial discretization, this fully discrete scheme has been shown to improve stability, preserve positivity, and can be implemented without nonlinear solvers. For convenience, we summarize the numerical scheme developed in \cite{jianguo2018,jianguo2021} in the Appendix. By using this AP solver, we can compute the density solution and then obtain $\mathcal{G}(u)$, thereby the likelihood functions in Step 3 of \textbf{Algorithm 1}.

To sum up, we have constructed a generic computational framework for the inverse problem of tumor growth models for all $m \ge 2$, which consists of a plain MCMC method and the AP forward solver originated in \cite{jianguo2018}. In the next, we shall provide both  a theoretical foundation as well as extensive numerical verification for the effectiveness of the proposed method.

\section{Well-posedness, stability, and convergence for the posterior distribution}
\label{sec: section3}
In this section, we establish the well-posedness and stability results for the Bayesian inversion problems of $(P_m')$ and $(P_\infty')$. We emphasize that these results are held uniformly for the physical index $m\in I$. In the last part of this chapter, to further exclude the possibility that the posterior diverges in the incompressible limit, where $m$ tends to infinity, we prove that the posterior distribution indeed converges in the sense of the Helllinger distance.

\subsection{Well-posedness and $L^1$ contraction for the forward problem}
We devote this section to establishing the well-posedness and properties of the forward problems, which also served as the cornerstone for showing the well-posedness, stability, and convergence of the posterior distribution in the inverse problems. 

Consider problem $(P_m)$, and we begin with recalling the results from \cite{benilan1996singular}. Firstly, we make following assumptions for the initial data $f$, and the growth function function $g(x,t,\rho)$.
\begin{assump}
\label{assump: f and g}
Let $f\in L^{\infty}(\Omega)$ with $f\geq 0$, and $g:Q_T\times\mathbb{R}_{+}\rightarrow\mathbb{R}$ satisfies:
\begin{enumerate}
    \item[(i)] $g(x,t,r)$ is continuous in $r\in\mathbb{R}_+$ for a.e. $(x,t)\in Q_T$,
    \item[(ii)] $g(\cdot,r)\in L_{\loc}^1(\bar{\Omega}\times[0,T))$ for any $r\in\mathbb{R}_+$,
    \item[(iii)] $\frac{\partial g}{\partial r}(x,t,\cdot)\leq K(\cdot)$ in $\mathcal{D}'(0,\infty)$ for a.e. $(x,t)\in Q_T$ with $K\in\mathcal{C}(\mathbb{R}_+)$,
    \item[(iv)] $g(\cdot,0)\geq 0$ a.e. on $Q_T$,
    \item[(v)] there exists $M\in W_{\loc}^{1,1}([0,T))$ such that $M'(t)\geq g(x,t,M(t))$ for a.e. $(x,t)\in Q_T$ and $M(0)\geq\norm{f}_{L^{\infty}(\Omega)}$.
\end{enumerate}
\end{assump}
The above assumptions implies
\begin{equation*}
    g(\cdot,\rho)\in L_{\loc}^1\left(\bar{\Omega}\times[0,T)\right)\qquad\text{for any}\quad\rho\in L_{\loc}^{\infty}(\Omega\times[0,T))
\end{equation*}
since
\begin{equation}
    g(\cdot,R)-\Tilde{K}(R)R\leq g(\cdot,r)\leq g(\cdot,0)+\Tilde{K}(R)R\quad\text{for}\quad 0\leq r\leq R,
\end{equation}
where $\Tilde{K}(R)=\max_{[0,R]} K$.

Under above assumptions, one has the well-posedness for $(P_m)$. We give the precise description in the following.

\begin{thm}[Lemma 2,\cite{benilan1996singular}]
\label{thm:exist_Pm}
Under Assumption \ref{assump: f and g}, for any $m\geq 2$ there exists a unique solution of $(P_m)$ in the sense
\begin{subequations}
\begin{empheq}[left={\empheqlbrace\,}]{align*}
&\rho\in L^{\infty}_{\loc}([0,T)\times\Omega)\cap \mathcal{C}([0,T);L^1(\Omega)),\quad\rho\geq 0,\quad \rho(\cdot, 0)=f(\cdot),\\
&\rho^m\in L^2_{\loc}([0,T);H^1(\Omega))\quad\text{and}\quad \frac{\partial\rho}{\partial t}=\Delta \rho^m+g(\cdot,\rho) \quad\text{in}\quad \mathcal{D}'(Q_T).
\end{empheq}
\end{subequations}
Moreover $\rho\leq M$ a.e. on $Q_T$. 
\end{thm}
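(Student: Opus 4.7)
\medskip
\noindent\textbf{Proof proposal.}
The statement is a standard well-posedness result for porous medium type equations with reaction, and I would attack it along classical lines (which, modulo book-keeping, is the Bénilan--Crandall argument). The plan has four stages: regularize to avoid degeneracy, extract uniform a priori bounds, pass to the limit, and finally establish uniqueness by an $L^1$-contraction argument that uses the one-sided Lipschitz condition (iii).

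First, I would construct approximate problems $(P_m^\eps)$ by replacing the initial datum with a smooth strictly positive $f_\eps \to f$ in $L^1$ and by adding a linear viscosity $\eps \Delta \rho$, so that the equation becomes uniformly parabolic. Standard quasilinear parabolic theory gives classical solutions $\rho^\eps$. The key a priori estimates are:
\begin{enumerate}
\item[(a)] The upper bound $\rho^\eps \le M(t)$ a.e., obtained by comparison with the spatially constant supersolution furnished by Assumption \ref{assump: f and g}(v): direct substitution of $M(t)$ into the regularized equation uses $M'(t) \ge g(x,t,M(t))$ and yields a supersolution.
\item[(b)] The lower bound $\rho^\eps \ge 0$, which follows from $f_\eps \ge 0$ together with Assumption \ref{assump: f and g}(iv) $g(\cdot,0) \ge 0$ and a comparison with the zero subsolution.
\item[(c)] An $L^2_{\mathrm{loc}}([0,T);H^1(\Omega))$ estimate for $(\rho^\eps)^m$, obtained by testing the equation with $(\rho^\eps)^m$, using the bound in (a) and the local $L^1$ bound on $g$ implied by the observation following Assumption \ref{assump: f and g}.
\item[(d)] A uniform equicontinuity in time in $L^1(\Omega)$, obtained either by a Kru\v zkov-type argument or by the standard shift estimate for the porous medium equation.
\end{enumerate}

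With these estimates in hand, the passage $\eps \to 0$ is carried out by Aubin--Lions compactness: the $L^\infty$ bound and the gradient bound on $(\rho^\eps)^m$ give pointwise a.e.\ convergence of a subsequence, which combined with the continuity of $g$ in $r$ (Assumption (i)) and the local $L^1$ control allows identification of the limit in the sense of $\mathcal{D}'(Q_T)$. The time continuity $\rho \in \mathcal{C}([0,T);L^1(\Omega))$ is inherited from the time equicontinuity in (d), and the initial trace is matched using $f_\eps \to f$ in $L^1$.

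The main obstacle, and the step I would spend the most care on, is uniqueness. Here the standard doubling-of-variables / Kato inequality approach applies. Given two solutions $\rho_1, \rho_2$, one has formally
\[
\partial_t (\rho_1 - \rho_2) - \Delta(\rho_1^m - \rho_2^m) = g(\cdot,\rho_1) - g(\cdot,\rho_2).
\]
Testing against $\sign^+(\rho_1^m - \rho_2^m) = \sign^+(\rho_1 - \rho_2)$ (justified rigorously via a smooth approximation of the sign function) kills the diffusion term because $\Delta(\rho_1^m - \rho_2^m)\,\sign^+(\rho_1 - \rho_2)$ integrates to a non-positive quantity, while Assumption \ref{assump: f and g}(iii) yields
\[
\bigl(g(x,t,\rho_1) - g(x,t,\rho_2)\bigr)\,\sign^+(\rho_1 - \rho_2) \le \tilde K(M)\,(\rho_1 - \rho_2)_+,
\]
with $\tilde K(M) = \max_{[0,M]} K$ finite by (a) and the continuity of $K$. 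A Gronwall inequality on $\int_\Omega (\rho_1 - \rho_2)_+\,dx$, together with the same argument applied to $(\rho_2 - \rho_1)_+$, gives $\rho_1 = \rho_2$ and yields the $L^1$-contraction estimate, from which uniqueness follows. The only subtlety worth flagging is the rigorous justification of Kato's inequality at the boundary of $\{\rho_1 > \rho_2\}$ and the handling of the boundary term under the homogeneous Dirichlet condition, which is handled by smooth truncation and integration by parts.
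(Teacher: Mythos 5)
A point of orientation first: the paper does not prove Theorem~\ref{thm:exist_Pm} at all. It is imported verbatim as Lemma~2 of \cite{benilan1996singular} and used downstream as a black box, so the only meaningful comparison is with the cited source. There the result is obtained by nonlinear semigroup theory: the operator $\rho\mapsto-\Delta\rho^m$ with homogeneous Dirichlet conditions is $m$-accretive in $L^1(\Omega)$, the Crandall--Liggett machinery produces the solution as a mild/integral solution, and the reaction term $g$ is treated as a perturbation, with Assumption~\ref{assump: f and g}(iii) keeping the perturbed problem quasi-contractive and (v) furnishing the barrier $M$. Your route --- uniform parabolic regularization, barriers, an energy estimate on $(\rho^\eps)^m$, Aubin--Lions compactness, and a Kato-inequality contraction for uniqueness --- is the other classical path to the same statement, and it deploys the hypotheses in the same places: (v) for the supersolution $M(t)$, (iv) for nonnegativity, (iii) for the Gronwall absorption. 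The semigroup route buys the $L^1$-contraction and uniqueness essentially for free from accretivity (which is why the paper can also quote Theorems~\ref{thm:rho m L1-cotraction} and \ref{thm:rho infty L1-cotraction} in the same spirit); your route is more elementary and makes the compactness mechanism explicit, at the price of having to justify the weak formulation limits by hand.

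The one genuinely thin spot in your sketch is the uniqueness step. For solutions known only in the class stated in the theorem ($\rho^m\in L^2_{\loc}([0,T);H^1(\Omega))$, equation in $\mathcal{D}'(Q_T)$), testing against $\sign^+(\rho_1-\rho_2)$ is not directly licit: $\partial_t(\rho_1-\rho_2)$ is merely a distribution, the chain rule producing $\partial_t(\rho_1-\rho_2)_+$ needs justification, and the claim that the diffusion term integrates to a non-positive quantity requires more than a smooth approximation of the sign function --- this is exactly where a genuine doubling-of-variables argument, or the passage through integral solutions and accretivity as in \cite{benilan1996singular}, does the real work. You name the right tools, but as written the ``smooth the sign function and integrate by parts'' step is the part of the proof that does not yet close. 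Relatedly, the comparison argument in your step (a) presupposes a comparison principle for the regularized reaction--diffusion equation, which itself rests on Assumption~\ref{assump: f and g}(iii); that dependence is worth stating explicitly rather than leaving implicit.
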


Besides the well-posedness of the problems $\left\{(P_m)\right\}_{m=2}^{\infty}$, the convergence of $(P_m)$ to $(P_\infty)$ is characterized as following.

\begin{thm}[Theorem 2,\cite{benilan1996singular}]
\label{thm:L1 convergence thm}
Under Assumption \ref{assump: f and g}, for $m\geq 2$, let $\rho^{(m)}$ be the solution of $(P_m)$ given in Theorem \ref{thm:exist_Pm}. Then,
\begin{enumerate}
    \item $\rho^{(m)}\rightarrow \rho^{(\infty)}$ in $\mathcal{C}((0,T);L^1(\Omega))$ as $m\rightarrow\infty$.
    \item Assuming $g(\cdot,1)\leq\Tilde{g}$ in $\mathcal{D}'(Q_T)$ with $\Tilde{g}\in L_{\loc}^2([0,T),H^{-1}(\Omega))$, then there exists a unique $(\rho,p_{\infty})$ solution of $(P_{\infty})$ in the sense
    \begin{subequations}
\begin{empheq}[left={\empheqlbrace\,}]{align*}
&\rho\in \mathcal{C}((0,T);L^1(\Omega)),\quad p_{\infty}\in L_{\loc}^2((0,T),H_0^1(\Omega)),\\
&\rho(\cdot, 0)=\Tilde{f}(\cdot),\quad 0\leq \rho\leq 1,\quad p_{\infty}\geq 0,\quad (\rho-1)p_{\infty}=0,\\
&\frac{\partial\rho}{\partial t}=\Delta p_{\infty} + g(\cdot,\rho)\quad\text{in}\quad \mathcal{D}'(Q_T),
\end{empheq}
\end{subequations}
where $\Tilde{f}=f\chi_{[\Tilde{p}_{\infty}=0]}+\chi_{[\Tilde{p}_{\infty}>0]}$, with $\Tilde{p}_{\infty}$ the unique solution of the 'mesa problem':
\begin{subequations}
\begin{align*}
    &\Tilde{p}_{\infty}\in H_0^1(\Omega),\quad\Delta\Tilde{p}_{\infty}\in L^{\infty}(\Omega),\quad\Tilde{p}_{\infty}\geq 0,\\
    & 0\leq\Delta\Tilde{p}_{\infty}+f\leq 1,\quad \Tilde{p}_{\infty}(\Delta\Tilde{p}_{\infty}+f-1)=0\quad\text{a.e.}\quad\Omega.
\end{align*}
\end{subequations}
And we have $\rho^{(\infty)}=\rho$.
\end{enumerate}
\end{thm}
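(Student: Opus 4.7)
The plan is to follow the standard weak-compactness strategy for the incompressible (stiff-pressure) limit of porous-medium-type equations, which naturally splits into three tasks: establishing uniform-in-$m$ a priori bounds, extracting limits and passing to the limit in the nonlinear terms, and then handling the initial layer via the mesa problem together with uniqueness.

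First I would collect uniform estimates. From Assumption \ref{assump: f and g}(v), the supersolution $M(t)$ gives $0\le \rho^{(m)}\le M(t)$ on $Q_T$, uniformly in $m$. Testing the equation against $p^{(m)}=\frac{m}{m-1}(\rho^{(m)})^{m-1}$ and using a Grönwall argument that absorbs the growth term via Assumption \ref{assump: f and g}(iii) yields a uniform bound on $\nabla p^{(m)}$ in $L^2_{\loc}((0,T); L^2(\Omega))$. Combining Theorem \ref{thm:exist_Pm} with the classical $L^1$-contraction property of the porous-medium operator and the one-sided Lipschitz condition (iii) gives an equicontinuity-in-time estimate for $\rho^{(m)}$ as a curve in $L^1(\Omega)$, uniformly in $m$.

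Next I would extract a convergent subsequence. By Arzelà--Ascoli in $\mathcal{C}([0,T);L^1(\Omega))$ (or Aubin--Lions on subintervals), one obtains $\rho^{(m_k)} \to \rho^{(\infty)}$ strongly in $\mathcal{C}((0,T);L^1(\Omega))$, while $p^{(m_k)} \rightharpoonup p_\infty$ in $L^2_{\loc}((0,T);H_0^1(\Omega))$. The uniform $L^\infty$ pressure ceiling forces $\rho^{(m)}\le (C(m-1)/m)^{1/(m-1)}\to 1$, giving $\rho^{(\infty)}\le 1$ a.e. To pass to the limit in $\Delta(\rho^{(m)})^m$ distributionally, I would rewrite it as $\tfrac{m-1}{m}\Delta(\rho^{(m)}p^{(m)})$ and combine the strong $L^1$ convergence of $\rho^{(m)}$ with the weak $H^1$ convergence of $p^{(m)}$. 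The complementarity relation $(\rho^{(\infty)}-1)p_\infty=0$ would be derived from the monotonicity (Minty) identity $\int (\rho^{(m)})^{m+1}$ versus $\int \rho^{(m)} p^{(m)}$, since on $\{p_\infty>0\}$ the uniform positivity of $p^{(m_k)}$ along the subsequence forces $\rho^{(m_k)}\to 1$.

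Finally I would handle the initial layer and uniqueness. If $f\le 1$ a.e., then $\tilde f=f$ and the initial condition is preserved by $L^1$-continuity. If $f>1$ on a set of positive measure, the extra assumption $g(\cdot,1)\le \tilde g$ with $\tilde g \in L^2_{\loc}([0,T); H^{-1})$ provides just enough control to justify that the instantaneous $m\to\infty$ relaxation is governed by the stationary variational inequality: one shows $p^{(m)}(\cdot,t)$ concentrates, as $t\to 0^+$ and $m\to\infty$, on the unique solution $\tilde p_\infty$ of the obstacle problem listed in the statement, from which $\tilde f$ is reconstructed. Standard monotone-operator arguments give uniqueness of $\tilde p_\infty$. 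Uniqueness of $(\rho,p_\infty)$ for $(P_\infty)$ itself follows from a Kato-type $L^1$-contraction obtained by subtracting two solutions and pairing with $\sign(\rho_1-\rho_2)$, using that $(p_1-p_2)(\rho_1-\rho_2)\ge 0$ by the complementarity condition. This uniqueness then upgrades the subsequential convergence to convergence of the full family.

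The main obstacle I anticipate is twofold and concentrated in the second step: cleanly passing to the limit in the nonlinear flux and simultaneously proving the complementarity condition $(\rho^{(\infty)}-1)p_\infty=0$ without a uniform $L^\infty$ pressure bound near $t=0$ when $f>1$. This is precisely where the mesa-problem characterization of $\tilde f$ is indispensable: it replaces the missing initial pressure bound by a stationary obstacle-problem input, allowing the energy/monotonicity identity to close. Outside of this initial layer the argument is robust, but threading through it requires the hypothesis $g(\cdot,1)\le \tilde g$ and the careful $H^{-1}$ control it provides.
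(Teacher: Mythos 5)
This theorem is not proved in the paper at all: it is imported verbatim as Theorem~2 of \cite{benilan1996singular}, so there is no in-paper argument to compare yours against. For what it is worth, the proof in that reference is operator-theoretic rather than the energy/compactness route you outline: the porous-medium operators are treated as $m$-accretive operators in $L^1(\Omega)$, one proves convergence of the resolvents $(I+\lambda A_m)^{-1}\to(I+\lambda A_\infty)^{-1}$ (a family of stationary elliptic problems, where the mesa/obstacle problem appears naturally), and then the convergence of the semigroups in $\mathcal{C}((0,T);L^1(\Omega))$ follows from the Br\'ezis--Pazy/Trotter--Kato theorem for nonlinear semigroups, with the reaction term $g$ handled as a perturbation under conditions (i)--(v) of Assumption~\ref{assump: f and g}. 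That route buys you the strong $L^1$ convergence and the identification of $\tilde f$ essentially for free once the resolvent convergence is established, which is precisely where your sketch is thinnest.

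Concretely, two steps in your plan are genuine gaps rather than routine details. First, Arzel\`a--Ascoli in $\mathcal{C}((0,T);L^1(\Omega))$ requires, for each fixed $t$, precompactness of $\{\rho^{(m)}(\cdot,t)\}_m$ in $L^1(\Omega)$; time-equicontinuity from the $L^1$-contraction does not provide this, and the uniform gradient bound you derive is on $p^{(m)}=\tfrac{m}{m-1}(\rho^{(m)})^{m-1}$, not on $\rho^{(m)}$. Since $p^{(m)}\to 0$ wherever $\rho^{(\infty)}<1$, the pressure bound carries no spatial-compactness information for the density exactly in the unsaturated region, so strong $L^1$ convergence of $\rho^{(m)}$ (as opposed to weak-$*$ convergence) does not follow from the estimates you list. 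Second, the claim of a ``uniform $L^\infty$ pressure ceiling'' forcing $\rho^{(m)}\le (C(m-1)/m)^{1/(m-1)}$ is not established by your estimates (only an $L^2_{\loc}(H^1_0)$ bound is), and the initial-layer analysis --- showing that the $m\to\infty$ collapse of data with $f>1$ is governed by the stated obstacle problem --- is asserted rather than argued, even though it is the hardest part of item (2). Your overall architecture is a legitimate alternative (it is close to the Perthame--Quir\'os--V\'azquez approach), but as written it would not close without supplying these two ingredients.
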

\begin{rem}
It is easy to check that the assumptions for $g(x,t,r)$ in the Assumption~\ref{assump: f and g} covers not only the form of \eqref{eqn:nonlocalFKPP g} but also the standard FKPP form employed in \cite{falco2023quantifying}. On the other hand, if the initial data $f$ is in the form of a characteristic function, then $f=\Tilde{f}$. And we only consider the initial data in such a form. Thus, $(P_m')$ and $(P_{\infty}')$ as sub-case of $(P_m)$ and $(P_m)$ correspondingly. Theorem \ref{thm:exist_Pm} and Theorem \ref{thm:L1 convergence thm} provide the existence and uniqueness of solution to $(P_m')$ and $(P_{\infty}')$ respectively. And Theorem \ref{thm:L1 convergence thm} also characterize the convergence of $(P_m')$ to $(P_{\infty}')$.
\end{rem}

Next, we introduce the so-called $L^1$-contraction property with respect to $(P_m)$ and $(P_\infty)$. Such property is inherited from the porous medium type equations.  Now, we begin with the case for $(P_m)$.

\begin{thm}[Theorem 1.1,\cite{igbida2021a}]
\label{thm:rho m L1-cotraction}
For each $m\geq 2$, if $\rho_1$ and $\rho_2$ are two solutions of $(P_m)$ associated with $g_1$ and $g_2$ satisfying Assumption \ref{assump: f and g} respectively, then 

\begin{equation*}
    \frac{d}{dt}\norm{\rho_1-\rho_2}_{L^1}\leq\norm{g_1-g_2}_{L^1},\qquad\text{in}\quad\mathcal{D}'(0,T).
\end{equation*}
\end{thm}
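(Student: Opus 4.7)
The plan is to follow the classical Kato-style argument for $L^1$-contraction of porous-medium type equations, adapted to account for the inhomogeneous source terms $g_1$ and $g_2$. I will subtract the two equations to obtain, in distributional sense,
\begin{equation*}
\partial_t(\rho_1-\rho_2) \;=\; \Delta(\rho_1^m-\rho_2^m) \;+\; \bigl(g_1(\cdot,\rho_1)-g_2(\cdot,\rho_2)\bigr),
\end{equation*}
and set $u:=\rho_1-\rho_2$ and $w:=\rho_1^m-\rho_2^m$. A key elementary observation is that, since $r\mapsto r^m$ is strictly increasing on $[0,\infty)$ and both $\rho_1,\rho_2\ge 0$, one has $\sign(u)=\sign(w)$ pointwise a.e., which is what makes it possible to convert the $L^1$-derivative of $u$ into an expression controllable via $w$.

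Next I would pick a smooth, bounded, nondecreasing approximation $S_\varepsilon$ of $\sign$ with $S_\varepsilon'\ge 0$, test the subtracted equation against $S_\varepsilon(w)$, and integrate over $\Omega$. The three terms behave as follows. For the diffusion term, integration by parts (legal because $w\in L^2_{\loc}((0,T);H^1_0(\Omega))$ by Theorem~\ref{thm:exist_Pm}, so that the boundary contribution vanishes) yields
\begin{equation*}
\int_\Omega S_\varepsilon(w)\,\Delta w \,dx \;=\; -\int_\Omega S_\varepsilon'(w)\,\lvert\nabla w\rvert^2\,dx \;\le\; 0.
\end{equation*}
For the time-derivative term, I would use the chain-rule identity $S_\varepsilon(w)\,\partial_t u = \partial_t\bigl(J_\varepsilon(u)\bigr) + R_\varepsilon$, where $J_\varepsilon$ is an approximate absolute value chosen via $J_\varepsilon'(u):=S_\varepsilon(w(u))$ — using $\sign(u)=\sign(w)$ — and $R_\varepsilon\to 0$ as $\varepsilon\to 0$; passing $\varepsilon\downarrow 0$ then gives the distributional identity $\tfrac{d}{dt}\|u\|_{L^1}$. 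For the source term, $|S_\varepsilon|\le 1$ immediately gives the bound by $\|g_1-g_2\|_{L^1}$ in the limit.

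Combining the three estimates yields the desired distributional inequality
\begin{equation*}
\tfrac{d}{dt}\|\rho_1-\rho_2\|_{L^1}\;\le\;\|g_1-g_2\|_{L^1}\quad\text{in }\mathcal{D}'(0,T).
\end{equation*}
The main technical obstacle is the rigorous justification of the approximate chain rule and the passage $\varepsilon\to 0$, because $u$ is only $\mathcal{C}([0,T);L^1)\cap L^\infty_{\loc}$ and $w$ only $L^2_{\loc}(H^1_0)$; in particular one cannot pointwise differentiate $|u|$ in time. The standard remedy is to regularize in time by Steklov averages (or to test against $S_\varepsilon(w)\,\phi(t)$ with a nonnegative smooth temporal cutoff), perform the computations on the regularizations where all quantities are classically differentiable, and then remove the regularization; the degeneracy $m\ge 2$ is not an obstacle at this step because the identity $\sign(u)=\sign(w)$ is uniform in $m$. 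Since this has been worked out in detail in \cite{igbida2021a}, I would cite that reference for the technical justification and present only the formal identity with the three estimates above as a guide to the reader.
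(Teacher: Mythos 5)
The paper does not actually prove this statement: it is imported verbatim as Theorem~1.1 of \cite{igbida2021a}, so there is no in-paper argument to compare against, and your decision to present the formal Kato-type computation while deferring the technical justification to that reference is exactly consistent with what the authors do. Your outline is the standard and correct one: the monotonicity of $r\mapsto r^m$ giving $\sign(\rho_1-\rho_2)=\sign(\rho_1^m-\rho_2^m)$, the sign of the diffusion term after testing with $S_\varepsilon(w)$, and the trivial bound on the source term are all right, and you correctly identify the chain rule in time as the genuinely delicate step. One small slip worth flagging: writing $J_\varepsilon'(u):=S_\varepsilon(w(u))$ treats $w$ as a function of $u$, which it is not ($w=\rho_1^m-\rho_2^m$ is not determined by $\rho_1-\rho_2$ alone); the coincidence is only of signs, pointwise, so the limiting identity $\int S_\varepsilon(w)\,\partial_t u\to \frac{d}{dt}\int|u|$ has to be justified directly (e.g.\ via doubling of variables or Steklov averaging against $\sign_\varepsilon$ of $w$), not through an exact primitive in $u$. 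Also, Theorem~\ref{thm:exist_Pm} only gives $\rho^m\in L^2_{\loc}([0,T);H^1(\Omega))$; the membership in $H^1_0$ that kills the boundary term comes from the boundary condition $\rho=p=0$ on $\Sigma_T$ rather than from the regularity statement itself. Neither point invalidates the outline, and both are handled in \cite{igbida2021a}.
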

On the other hand, the limit problem $(P_{\infty})$ possess similar property.  
\begin{thm}[Theorem 2.1,\cite{igbida2021b}]
\label{thm:rho infty L1-cotraction}
If $(\rho_1,p_1)$ and $(\rho_2,p_2)$ are two solutions of $(P_\infty)$ associated with $g_1$ and $g_2$ satisfying Assumption \ref{assump: f and g} respectively, then
\begin{equation*}
    \frac{d}{dt}\norm{\rho_1-\rho_2}_{L^1}\leq\norm{g_1-g_2}_{L^1},\qquad\text{in}\quad\mathcal{D}'(0,T).
\end{equation*}
\end{thm}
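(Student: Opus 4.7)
My plan is to deduce Theorem~\ref{thm:rho infty L1-cotraction} directly from the corresponding $L^1$ contraction for $(P_m)$ in Theorem~\ref{thm:rho m L1-cotraction} by passing to the limit $m\to\infty$ with the help of Theorem~\ref{thm:L1 convergence thm}. This approach recycles the machinery already imported from \cite{igbida2021a} and avoids redoing a Kato / doubling-of-variables argument directly on the degenerate limit equation, where the presence of the separate pressure variable $p_\infty$ and the complementarity constraint $(\rho-1)p_\infty=0$ would complicate a direct sign-test-function calculation.

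Concretely, I would fix two growth functions $g_1,g_2$ satisfying Assumption~\ref{assump: f and g} and, for each $m\ge 2$, let $\rho_i^{(m)}$ be the solution of $(P_m)$ driven by $g_i$, $i=1,2$. Theorem~\ref{thm:rho m L1-cotraction} supplies, for every $m$,
\begin{equation*}
\frac{d}{dt}\bigl\|\rho_1^{(m)}(t)-\rho_2^{(m)}(t)\bigr\|_{L^1(\Omega)} \le \|g_1-g_2\|_{L^1(\Omega)}\quad \text{in }\mathcal{D}'(0,T).
\end{equation*}
Testing against an arbitrary non-negative $\varphi\in C_c^\infty(0,T)$ recasts this as
\begin{equation*}
-\int_0^T \bigl\|\rho_1^{(m)}(t)-\rho_2^{(m)}(t)\bigr\|_{L^1(\Omega)}\,\varphi'(t)\,dt \le \int_0^T \|g_1(t)-g_2(t)\|_{L^1(\Omega)}\,\varphi(t)\,dt.
\end{equation*}
Next I would invoke Theorem~\ref{thm:L1 convergence thm}, which yields $\rho_i^{(m)}\to \rho_i$ in $C((0,T);L^1(\Omega))$ and identifies the limit as the density component of the unique $(P_\infty)$ solution with source $g_i$. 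By the triangle inequality,
\begin{equation*}
\bigl|\|\rho_1^{(m)}-\rho_2^{(m)}\|_{L^1}-\|\rho_1-\rho_2\|_{L^1}\bigr|\le \|\rho_1^{(m)}-\rho_1\|_{L^1}+\|\rho_2^{(m)}-\rho_2\|_{L^1},
\end{equation*}
so the integrand on the left converges uniformly in $t$, allowing a clean passage to the limit in the tested inequality and delivering the asserted distributional bound.

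The main obstacle I foresee is the interpretation of $\|g_1-g_2\|_{L^1}$ once the growth functions depend on the density: the quantity one should really compare is $g_i(\cdot,\cdot,\rho_i^{(m)})$, so the right-hand side must also be passed to the limit. Assumption~\ref{assump: f and g} comes to the rescue here: continuity of $g_i$ in its last argument together with the uniform bound $\rho_i^{(m)}\le M$ justifies an application of dominated convergence once the $L^1$ convergence $\rho_i^{(m)}\to\rho_i$ is in hand. For the concrete source $g(x,t,\rho)=h(x)\rho$ used throughout this paper this is essentially automatic, and the linear structure keeps the limiting step short and conceptually clean, so I expect the bulk of the write-up to be devoted to the convergence bookkeeping rather than to any genuinely hard estimate.
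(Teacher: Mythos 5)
The paper does not actually prove this statement: it is imported verbatim as Theorem 2.1 of \cite{igbida2021b}, where it is established by working directly on the limit system (a Kato-type / doubling argument for the Hele--Shaw complementarity problem). Your proposal to obtain it instead by passing to the limit $m\to\infty$ in Theorem \ref{thm:rho m L1-cotraction} is therefore a genuinely different route. The convergence bookkeeping you describe is sound as far as it goes: the convergence $\rho_i^{(m)}\to\rho_i$ in $\mathcal{C}((0,T);L^1(\Omega))$ gives locally uniform convergence of $t\mapsto\Vert\rho_1^{(m)}(t)-\rho_2^{(m)}(t)\Vert_{L^1}$ on compact subsets of $(0,T)$, which suffices against $\varphi'\in C_c^{\infty}(0,T)$, and the right-hand side $g_i(\cdot,t,\rho_i^{(m)})\to g_i(\cdot,t,\rho_i)$ is handled by continuity of $g_i$ in $r$, the uniform bound $\rho_i^{(m)}\le M$, and the domination $\vert g(\cdot,r)\vert\le \vert g(\cdot,0)\vert+\Tilde{K}(M)M$.

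The gap is in the identification step. Theorem \ref{thm:L1 convergence thm} tells you that $\rho_i^{(m)}$ converges to \emph{some} solution of $(P_\infty)$, namely the one with the projected initial datum $\tilde f_i$; the theorem you are proving concerns \emph{arbitrary} solutions $(\rho_i,p_i)$. To close the argument you need (a) $\tilde f_i=f_i$, which does hold here because any $(P_\infty)$ solution satisfies $0\le\rho\le1$, hence $0\le f_i\le1$ and $\tilde p_\infty=0$ solves the mesa problem --- but this must be said; and (b) uniqueness of the $(P_\infty)$ solution with data $(f_i,g_i)$, so that the given $\rho_i$ coincides with the limit. Uniqueness is supplied only by part (2) of Theorem \ref{thm:L1 convergence thm}, under the extra hypothesis $g(\cdot,1)\le\Tilde{g}\in L^2_{\loc}([0,T),H^{-1}(\Omega))$, which is not part of Assumption \ref{assump: f and g}; without it your argument establishes the contraction only for the particular limit solutions, not for the theorem as stated. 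There is also a faint circularity to keep in mind: in the original references the $L^1$ contraction is the engine that produces uniqueness, not the reverse. For the sources actually used in this paper, $g=h\rho$ with $0<h\in L^{\infty}(\Omega)$, the extra hypothesis holds and your argument goes through; as a proof of the quoted theorem in its stated generality it does not.
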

It is important to observe the fact that Theorem \ref{thm:rho m L1-cotraction} holds uniformly to $m\in I$, which further allows us to control the $L^1$ norm for the family of problems $\left\{(P_m')\right\}_{m=2}^{\infty}$ uniformly. This property brings significant convenience in later showing the well-posedness and stability of the posterior distribution of this family of Bayesian inversion.

\subsection{Set up for the prior measure}
\label{sec:Generation and prior measure for the random variables}
In the Bayesian inversion, we shall focus on the models $(P_m')$ and $(P_\infty')$, and treat $u$ as a random variable. In this section we formulate the prior measure of $u$. 

Recall that $u$ contains two different kinds of random quantities, the parametric unknown $z$, and the non-parametric unknown  $h(x)$. For the former, we can assign a prior measure relatively simply. We denote $X_z$ to be the range of $z$, and $\mu_0^z$ be the prior measure of it. 
For a concrete example, considering the case that $z=(z_1, z_2)$ represents the center of the initial data, then we can let the uniform distribution $\mathbb{U}[0,Z_{\max}]^2$ (with some given $Z_{\max}>0$) to be the prior measure $\mu_0^z$, and take $X_z=[0,Z_{\max}]^2$.

However, on the other hand, $h(x)$ is no longer a simple parameter or vector as $z$, but an element in some function space. Therefore, we have to be more careful about selecting the prior measure of it. Fortunately, there is a natural way for setting probability on separable Banach space, in which the elements can be expressed in the form of an infinite series. That is, one can write $h(x)$ into
\begin{equation}
\label{eqn: h(x) representation}
    h(x)=h_0(x)+\sum_{i=1}^{\infty}\gamma_{i}\zeta_i\phi_i,
\end{equation}
where $h_0(x)$ is a deterministic function, $\gamma=\{\gamma_i\}_{i=1}^{\infty}$ is a deterministic sequence of scalars, $\phi=\{\phi_i\}_{i=1}^{\infty}$ is a set of basis functions, and $\zeta=\{\zeta_i\}_{i=1}^{\infty}$ be an i.i.d. random sequence. 
We demonstrate how to select these scalars and functions in the following.

To begin with, we consider the eigen-problem $-\Delta \phi=\lambda \phi$ with Dirichlet boundary condition on $\Omega$. Let $\phi_i$ denote the $i$-th normalized (with respect to $\left\Vert\cdot\right\Vert_{L^{\infty}}$) eigen-function, and $\lambda_i$ be the corresponding eigen-value. Then we make the following assumptions with respect to the expression \eqref{eqn: h(x) representation}.
\begin{assump}
\label{assump:random f and g}
    \begin{enumerate}
        \item $h_0(x)$ is a known positive deterministic function that belongs to the space $L^{\infty}(\Omega)$,
        \item $\gamma=\{\gamma_i\}_{i=1}^{\infty}$ be a deterministic sequence with $\gamma_i=\lambda_i^{-s/2}$ for some $s>1$,
        \item $\zeta=\{\zeta_i\}_{i=1}^{\infty}$ be an i.i.d. random sequence with $\zeta_i\sim N(0,1)$, thus $\zeta$ can be viewed as a random element in the probability space $(\mathbb{R}^{\infty},\mathcal{B}(\mathbb{R}^{\infty}),\mathbb{P})$, where $\mathbb{P}$ denotes the infinity Cartesian product of $N(0,1)$,
        \item $\{\phi_i\}_{i=1}^{\infty}$ denote the normalized eigen-functions of $-\Delta$ as prescribed.
    \end{enumerate}
\end{assump}
Then we let $X_h$ denote the closure of the linear span of the functions $(h_0,\{\phi_i\}_{i=1}^{\infty})$ with respect to the norm $\norm{\cdot}_{L^{\infty}(\Omega)}$. Thus, the Banach space $(X_h,\norm{\cdot}_{L^{\infty}(\Omega)})$ is separable (recall the fact that $L^{\infty}(\Omega)$ is not separable itself). 
Furthermore, with above setting $h(x)$ becomes a sample from the Gaussian measure $\mu_0^h:=N(h_0(x), (-\Delta)^{-s})$. And by a standard argument (see, e.g., Theorem 2.12. in \cite{dashti2017bayesian}), we have $h(x)\in C^{0,t}(\Omega)$ hold $\mu_0^h$-a.s. for any $t<1\wedge(s-1)$. Then, by embedding theory, one can further conclude $h(x)\in L^{\infty}(\Omega)$ hold $\mu_0^h$-a.s..

For convenience, we further define the Banach space for $u$
\begin{equation}
\label{eqn:def space X}
 X:=X_z\times X_h   
\end{equation}
with respect to the norm
\begin{equation}
\label{eqn:upper bound for u}
   \norm{u}_{X}
    \coloneqq\max \left\{\vert z\vert, \norm{h}_{L^{\infty}(\Omega)}\right\}
\end{equation}
where $\left\vert\cdot\right\vert$ denotes the Euclidean distance on $\mathbb{R}^2$. And the prior measure for $u$, $\mu_0$, is given by the product measure
\begin{equation}
\label{eqn:def mu0}
    \mu_0:=\mu_0^z\times\mu_0^h.
\end{equation}
Then one has $\mu_0(X)=1$. 

\subsection{Well-posedness and stability of the inverse problems} 
In this section we establish the well-posedness and stability results for the inverse problems $(P_m')$ and $(P_\infty')$. And we emphasize that these results hold for any $m\in[2,\infty]$, in particular $m=\infty$ corresponds to $(P_\infty')$. 

For the convenience of the reader, we recall the definition of prior measure and the noise vector here:
\begin{itemize}
    \item \textbf{Prior:} $u\sim \mu_0$ measure on $X$, with $X$ and $\mu_0$ defined in \eqref{eqn:def space X} and \eqref{eqn:def mu0} respectively.
    \item \textbf{Noise:} $\eta\sim N(0,\Gamma)$, where $\Gamma$ is a $JK$ by $JK$ diagonal matrix with the diagonal elements given by $\sigma^2_{j,k}>0$. 
    \item \textbf{Noisy observation:} Consider $(P_m')$ with any given $u\in X$, then the noisy observation $y\sim N(\mathcal{G}^m(u),\Gamma):=\mathbb{Q}_0$, where $\mathcal{G}^m$ is defined in \eqref{eqn:noisy observation for m}. Similarly, for $(P_\infty')$ one has $y\sim N(\mathcal{G}^\infty(u),\Gamma)$.
\end{itemize}
For later convenience, we further define the product measure $\nu_0$ to be
\begin{equation}
\label{eqn:nu0}
    \nu_0(du,dy)=\mu_0(du)\mathbb{Q}_0(dy).
\end{equation}
In the following, we mainly focus on the case $(P_m')$ , but one can establish similar results to $(P_\infty')$ without any difficulty. 

Our interest is the posterior distribution of $u$ given $y$, denote as $\mu^{y}_m$. With the prior, noise, and noisy observation above, one can first write out the Radon-Nikodym relation between $\mu_0$ and $\mu^{y}_m$ as follows:
\begin{equation}
\label{eqn:Phi_Z}
    \frac{d\mu_m^{y}}{d\mu_0}(u)=\frac{1}{Z_{m}(y)}\exp{(-\Phi_m(u,y))},\qquad
    Z_{m}(y)\coloneqq\int_{X}\exp{(-\Phi_m(u,y))}d\mu_0(u),
\end{equation}
where the potential function $\Phi_m(u,y)$ is given by:
\begin{equation}
\label{eqn:Phi_m}
    \Phi_m(u,y)=\frac{1}{2}\vert\Gamma^{-1/2}(\mathcal{G}^m(u)-y)\vert^2-\frac{1}{2}\vert\Gamma^{-1/2}y\vert^2.
\end{equation}
And we can define $(\Phi_{\infty},Z_{\infty}, \mu_{\infty}^y)$ analogously for the problem $(P_\infty')$.

Then to justify the well-posedness and stability of the posterior distribution $\mu_m^y$ reduces to the justification of the well-posedness and stability of the Radon-Nikodym relations \eqref{eqn:Phi_Z}. To do this, following the framework in \cite{dashti2017bayesian}, it is sufficient for us to check the following properties for the potential function $\Phi_m$. And parallelly, $\Phi_\infty$ for $\mu_{\infty}^y$.
\begin{prop}
Consider $(P_m')$ with any $m\geq 2$, let $u\sim\mu_0$. Then the potential $\Phi_{m}$ satisfies
\label{prop:props for Phi and Z}
\begin{enumerate}
    \item 
    $\Phi_{m}(u,y)$ is $\nu_0$ measurable (defined in \eqref{eqn:nu0});
    \item 
    there exist function $M_i:\mathbb{R}^{+}\times\mathbb{R}^{+}\mapsto\mathbb{R}^{+}$, $i=1,2$, monotonic non-decreasing, and $M_2$ strictly positive such that for all $u\in X$, $y,y_1,y_2\in \mathbb{B}_{r}(0)\subseteq Y$:
    \begin{align}
        \Phi_m(u,y)&\geq -M_1(r,\norm{u}_{X}),\\
        \vert\Phi_m(u,y_1)-\Phi_m(u,y_2)\vert&\leq M_2(r,\norm{u}_{X})\vert y_1-y_2\vert;
    \label{eqn: difference between potential fcns}
    \end{align}
    \item
    if further 
    \begin{equation}
        \exp{(M_1(r,\norm{u}_X))}\in L_{\mu_0}^1(X;\mathbb{R}),
    \end{equation}
    for any $r>0$. Then the normalization constant $Z_m$ given by \eqref{eqn:Phi_Z} is positive $\mathbb{Q}_0$-a.s..
\end{enumerate}
\end{prop}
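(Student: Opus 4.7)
All three statements stem from one uniform a priori estimate on $\rho^{(m)}(u)$. Since $f_0^z$ is a characteristic function with $\|f_0^z\|_\infty = 1$ and $g=h\rho$ with $0<h\in L^\infty$, I can take the comparison function $M(t)=e^{\|h\|_\infty t}$ in Assumption~\ref{assump: f and g}(v); Theorem~\ref{thm:exist_Pm} then gives
\begin{equation*}
0 \le \rho^{(m)}(u)(x,t) \le e^{\|u\|_X T}\qquad \text{a.e.\ on } Q_T,\quad \text{uniformly in } m\ge 2.
\end{equation*}
Coupled with the boundedness of $\Omega$ and of the test functions $\xi_k$, this yields
\begin{equation*}
|\mathcal{G}^m(u)| \;\le\; C_1 e^{C_2\|u\|_X},
\end{equation*}
with $C_1,C_2$ depending only on $T$, $|\Omega|$, $J$, $K$, and $\{\|\xi_k\|_\infty\}$, independent of $m$.

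For (1), I will upgrade the bound above to continuity of $\mathcal{G}^m\colon X\to Y$, which immediately yields joint Borel, hence $\nu_0$, measurability of $\Phi_m$. Continuity in $h$ follows from Theorem~\ref{thm:rho m L1-cotraction}: writing $g_i(x,\rho)=h_i(x)\rho$, the $L^1$-contraction together with the uniform $L^\infty$ bound above and a Gronwall argument gives $\|\rho^{(m)}(z,h_1)-\rho^{(m)}(z,h_2)\|_{L^1}\le C(T,\|u\|_X)\|h_1-h_2\|_{L^\infty}$. Continuity in the finite-dimensional parameter $z$ follows from $L^1$-continuity of $z\mapsto f_0^z$ together with the standard $L^1$-stability of PME-type solutions in the initial datum. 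Since each $l_{j,k}$ is bounded on $L^1(\Omega)$, continuity of $\mathcal{G}^m$, and thus of $\Phi_m$, follows.

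For (2), expanding
\begin{equation*}
\Phi_m(u,y) \;=\; \tfrac{1}{2}\bigl|\Gamma^{-1/2}\mathcal{G}^m(u)\bigr|^2 - \bigl\langle \Gamma^{-1}\mathcal{G}^m(u),\,y\bigr\rangle,
\end{equation*}
the quadratic term is nonnegative, and for $|y|\le r$ Cauchy--Schwarz controls the linear term by $C\,r\,e^{C_2\|u\|_X}$, producing the desired lower bound with $M_1(r,s):=C\,r\,e^{C_2 s}$. The Lipschitz estimate is immediate from the cancellation
\begin{equation*}
\Phi_m(u,y_1)-\Phi_m(u,y_2) \;=\; \bigl\langle\Gamma^{-1}\mathcal{G}^m(u),\,y_2-y_1\bigr\rangle,
\end{equation*}
giving $M_2(r,s)=Ce^{C_2 s}$, which is in fact independent of $r$. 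Both $M_i$ are monotone nondecreasing, and $M_2>0$. For (3), the same expansion provides a matching upper bound $\Phi_m(u,y)\le M_3(r,\|u\|_X)$ on $|y|\le r$; restricting the integral defining $Z_m(y)$ to a ball $\{\|u\|_X\le R\}$ then gives $Z_m(y)\ge e^{-M_3(r,R)}\mu_0(\{\|u\|_X\le R\})>0$ for every $y\in\mathbb{B}_r(0)\subseteq Y$, since the prior charges every nonempty open set in $X$. Finiteness of $Z_m(y)$ is in turn guaranteed by the stated integrability hypothesis.

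\textbf{Main obstacle.} The only delicate point is establishing the uniform-in-$m$ continuity of $u\mapsto \rho^{(m)}$ in a strong enough sense; fortunately, the $L^\infty$ comparison from Theorem~\ref{thm:exist_Pm} and the $L^1$-contraction from Theorem~\ref{thm:rho m L1-cotraction} both carry $m$-independent constants, so once they are assembled, the rest is bookkeeping. The critical observation is that $m$ enters only through the nonlinear diffusion, which drops out of the $L^1$-contraction entirely, making the estimates genuinely uniform across $m\in[2,\infty)$.
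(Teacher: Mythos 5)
Your proposal is correct and follows essentially the same route as the paper: a uniform-in-$m$ a priori bound on $\mathcal{G}^m(u)$ of the form $C e^{C\norm{u}_X T}$, followed by direct manipulation of the quadratic potential. Two points of comparison are worth recording. First, for measurability you prove genuine continuity of $u\mapsto\mathcal{G}^m(u)$ (via the $L^1$-contraction in $h$ and $L^1$-stability in the initial datum), whereas the paper only verifies boundedness in each variable before invoking its measurability lemma; since that lemma is stated for continuous maps, your extra work is arguably the more careful reading, and your exact cancellation $\Phi_m(u,y_1)-\Phi_m(u,y_2)=\langle\Gamma^{-1}\mathcal{G}^m(u),y_2-y_1\rangle$ is cleaner than the paper's expansion. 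Second, and this is the one substantive caution: your lower bound discards the nonnegative quadratic term and keeps the cross term, giving $M_1(r,s)=C\,r\,e^{C_2 s}$, which grows exponentially in $\norm{u}_X$. The paper instead uses $\Phi_m(u,y)\geq-\tfrac12\vert\Gamma^{-1/2}y\vert^2\geq -C_\Gamma r^2$, so its $M_1$ is independent of $\norm{u}_X$. For the proposition as stated both choices are admissible (part (3) is conditional on integrability of $\exp(M_1)$, and your positivity argument for $Z_m$ does not even need that hypothesis), but the choice matters downstream: the stability theorem requires $\exp(M_1(r,\norm{u}_X))(1+M_2^2)\in L^1_{\mu_0}$, which the paper verifies via the Fernique theorem precisely because its $M_1$ does not depend on $u$. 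With your $M_1$ one would need $\exp(Cre^{C_2\norm{u}_X})$ integrable against a Gaussian prior, which fails. You should therefore replace your $M_1$ with the $u$-independent bound before proceeding to the stability result.
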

\begin{rem}
\label{rmk:prop for Phi infty}
The above proposition hold for $\Phi_{\infty}$ as well. In particular, the second proposition for $\Phi_{\infty}$ hold with the same $M_1$ and $M_2$ as $\Phi_{m}$. This can be see from the proof of Proposition \ref{prop:props for Phi and Z} and Lemma \ref{lem:L1 bound of rho}.
\end{rem}
Before showing the above properties, we establish following auxiliary lemmas first.
\begin{lem}[Lemma 3.3,\cite{dashti2017bayesian}]
\label{lem:measurable lem}
Let $(Z,B)$ be a Borel measurable topology space and assume that $G\in\mathcal{C}(Z;\mathbb{R})$ and that $\pi(Z)=1$ for some probability measure $\pi$ on $(Z,B)$. Then $G$ is a $\pi$-measurable function.
\end{lem}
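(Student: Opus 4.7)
The plan is to recognize this as the classical fact that continuous functions are Borel measurable, combined with the observation that Borel measurability implies $\pi$-measurability for any probability measure $\pi$ defined on the Borel $\sigma$-algebra. Since $B$ is the $\sigma$-algebra generated by the topology on $Z$, the route is to verify the measurability criterion against a convenient family of generators of the Borel $\sigma$-algebra on $\mathbb{R}$.

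First, I would invoke the standard reduction: to show that a map $G:Z\to\mathbb{R}$ is measurable as a map $(Z,B)\to(\mathbb{R},\mathcal{B}(\mathbb{R}))$, it suffices to check that the preimage of every element of some generating family of $\mathcal{B}(\mathbb{R})$ lies in $B$. A natural choice is the collection of open sets of $\mathbb{R}$ (or even the half-lines $(-\infty,a)$, $a\in\mathbb{R}$). By the continuity hypothesis $G\in\mathcal{C}(Z;\mathbb{R})$, the preimage $G^{-1}(U)$ is open in $Z$ for every open $U\subseteq\mathbb{R}$, and open subsets of $Z$ lie in $B$ by the very definition of the Borel $\sigma$-algebra. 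This establishes that $G$ is Borel measurable.

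To pass from Borel measurability to $\pi$-measurability, I would note that $\pi$-measurability is understood with respect to the $\pi$-completion $B^{\pi}\supseteq B$ of the Borel $\sigma$-algebra. Any $B$-measurable function is automatically $B^{\pi}$-measurable, since the preimage of any Borel set in $\mathbb{R}$ lies in $B\subseteq B^{\pi}$. The hypothesis $\pi(Z)=1$ is used only to ensure that $\pi$ is a probability measure on $(Z,B)$ so that the completion $B^{\pi}$ is well defined; the argument does not rely on any further properties of $\pi$.

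There is no substantive obstacle in this argument; the lemma is essentially a direct unpacking of definitions. The only mild delicacy is making precise the convention about ``$\pi$-measurable'' (whether with respect to $B$ or its completion $B^{\pi}$), but in either interpretation the conclusion follows from Borel measurability of $G$, which in turn follows immediately from continuity.
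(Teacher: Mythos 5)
Your proposal is correct, and it coincides with the intended argument: the paper itself gives no proof of this lemma, quoting it verbatim as Lemma 3.3 of \cite{dashti2017bayesian}, and the standard justification is exactly your two-step unpacking --- continuity gives $G^{-1}(U)$ open, hence in $B$, for every open $U\subseteq\mathbb{R}$, so $G$ is Borel measurable, and Borel measurability trivially implies $\pi$-measurability (with respect to $B$ or its $\pi$-completion). Your side remark is also accurate: as stated here the hypothesis $\pi(Z)=1$ carries no real force; it matters only in the setting of \cite{dashti2017bayesian}, where $Z$ arises as a full-measure subset of a larger space (as with $X$ and $\mu_0(X)=1$ in this paper) and the lemma is invoked to conclude measurability of a potential defined, and continuous, only on that subset.
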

\begin{lem}
\label{lem:L1 bound of rho}
For $u=(z,h(x))$, with $h(x)$ satisfy Assumption \ref{assump:random f and g}. Let $\rho$ be either $\rho^{(m)}$(any $m\geq 2$) or $\rho^{(\infty)}$ with initial condition $f_0(x+z)$. Then for any $0\leq t\leq T$, we have
\begin{equation}
    \norm{\rho(t)}_{L^1}\leq\pi e^{\norm{u}_{X}T}.
\end{equation}
\end{lem}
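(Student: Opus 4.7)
The plan is to derive a Gronwall-type differential inequality for $\norm{\rho(t)}_{L^1(\Omega)}$ and combine it with an explicit bound on the initial mass. First, since $f_0 = \chi_{D_0}$ with $D_0\subset\mathbb{B}_1(0)$, the translated initial condition satisfies
\begin{equation*}
\norm{\rho(\cdot,0)}_{L^1(\Omega)} = \norm{f_0(\cdot+z)}_{L^1(\Omega)} \leq |D_0| \leq |\mathbb{B}_1(0)| = \pi,
\end{equation*}
independently of the shift parameter $z$.

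Next, I would formally integrate the PDE in $(P_m')$ over $\Omega$ to obtain
\begin{equation*}
\frac{d}{dt}\int_\Omega \rho\,dx = \int_\Omega \Delta\rho^m\,dx + \int_\Omega h(x)\rho\,dx,
\end{equation*}
and argue that the diffusion term is non-positive. Since $\rho\geq 0$ in $\Omega$ and $\rho=0$ on $\partial\Omega$ (hence $\rho^m=0$ on $\partial\Omega$), the outward normal derivative $\partial_\nu\rho^m$ must be $\leq 0$, so the divergence theorem gives $\int_\Omega\Delta\rho^m\,dx \leq 0$. The growth term is controlled by the definition \eqref{eqn:upper bound for u}:
\begin{equation*}
\int_\Omega h(x)\rho\,dx \leq \norm{h}_{L^\infty(\Omega)}\norm{\rho}_{L^1(\Omega)} \leq \norm{u}_X\norm{\rho(t)}_{L^1(\Omega)}.
\end{equation*}
Combining the two estimates yields $\frac{d}{dt}\norm{\rho(t)}_{L^1} \leq \norm{u}_X\norm{\rho(t)}_{L^1}$, and Gronwall's inequality produces $\norm{\rho(t)}_{L^1}\leq \pi\,e^{\norm{u}_X T}$ for all $0\leq t\leq T$. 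For the limit problem $(P_\infty')$ the argument is identical after replacing $\Delta\rho^m$ by $\Delta p_\infty$: since $p_\infty\geq 0$ in $Q_T$ with $p_\infty=0$ on $\Sigma_T$, one again has $\int_\Omega \Delta p_\infty\,dx\leq 0$, and the same Gronwall step closes the estimate.

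The main technical obstacle is justifying the boundary-term sign rigorously, since weak solutions only enjoy $\rho^m\in L^2_{\mathrm{loc}}([0,T);H^1(\Omega))$ and the pointwise trace $\partial_\nu\rho^m$ is not directly defined. To remedy this I would test the weak formulation of $(P_m')$ against a family of smooth cutoff functions $\varphi_\epsilon\in C_c^\infty(\Omega)$ with $0\leq\varphi_\epsilon\leq 1$ and $\varphi_\epsilon\nearrow 1$ interiorly, giving
\begin{equation*}
\frac{d}{dt}\int_\Omega \rho\,\varphi_\epsilon\,dx = -\int_\Omega \nabla\rho^m\cdot\nabla\varphi_\epsilon\,dx + \int_\Omega h(x)\rho\,\varphi_\epsilon\,dx.
\end{equation*}
In a sufficiently thin boundary layer, $\nabla\rho^m$ points into $\Omega$ (since $\rho^m$ vanishes at $\partial\Omega$ and is non-negative inside) and so does $\nabla\varphi_\epsilon$, so $\nabla\rho^m\cdot\nabla\varphi_\epsilon\geq 0$, giving a non-positive contribution on the right. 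Passing $\epsilon\to 0$ by dominated convergence (using the $L^1$ continuity in time and the $L^\infty$ bound from Theorem \ref{thm:exist_Pm}) recovers the desired differential inequality without any reliance on classical boundary traces. Once this is established, the Gronwall argument sketched above completes the proof uniformly for $m\in[2,\infty]$.
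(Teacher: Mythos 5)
Your overall skeleton (initial mass bounded by $\pi$, then a differential inequality $\frac{d}{dt}\norm{\rho}_{L^1}\leq \norm{u}_X\norm{\rho}_{L^1}$, then Gronwall) is exactly the skeleton of the paper's proof, and the estimate $\norm{f_0(\cdot+z)}_{L^1}\leq |D_0|\leq\pi$ is fine. The difference is in how the differential inequality is obtained: the paper does not integrate the PDE at all, but simply applies the $L^1$-contraction results already stated as Theorem \ref{thm:rho m L1-cotraction} and Theorem \ref{thm:rho infty L1-cotraction}, with $\rho_1=\rho$, $\rho_2=0$, $g_1=h\rho$, $g_2=0$, which gives $\frac{d}{dt}\norm{\rho}_{L^1}\leq\norm{h\rho}_{L^1}\leq\norm{u}_X\norm{\rho}_{L^1}$ in $\mathcal{D}'(0,T)$ in one line, uniformly in $m$ and for the limit problem alike.

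Your attempt to prove that inequality from scratch has a genuine gap at the boundary term. For the weak solutions of Theorem \ref{thm:exist_Pm} one only has $\rho^m\in L^2_{\loc}([0,T);H^1(\Omega))$, and the claim that ``in a sufficiently thin boundary layer $\nabla\rho^m$ points into $\Omega$'' is a pointwise geometric statement that is simply false in general: a nonnegative $H^1$ function vanishing on $\partial\Omega$ can vanish on interior neighborhoods of parts of the boundary, or oscillate, so $\nabla\rho^m\cdot\nabla\varphi_\epsilon$ has no pointwise sign, and the limit $\epsilon\to 0$ of $\int_\Omega\nabla\rho^m\cdot\nabla\varphi_\epsilon\,dx$ need not exist or be signed without additional regularity of the normal flux. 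What you actually need is an integrated flux inequality for nonnegative functions with zero trace (equivalently, a Kato-type inequality or the $L^1$-contraction structure of the porous-medium semigroup), and that is precisely the content of the cited theorems of Igbida. So the cleanest repair of your argument is to replace the cutoff computation by an appeal to Theorems \ref{thm:rho m L1-cotraction} and \ref{thm:rho infty L1-cotraction}; with that substitution your proof coincides with the paper's. (Minor additional point: Gronwall is applied by the paper in the integrated form $\norm{\rho(t)}_{L^1}\leq\pi+\norm{u}_X\int_0^t\norm{\rho}_{L^1}\,ds$, which avoids having to discuss absolute continuity of $t\mapsto\norm{\rho(t)}_{L^1}$; your differential form would need a word on why the derivative version is legitimate for a distributional inequality.)
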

\begin{proof}
According to Theorem \ref{thm:rho m L1-cotraction} and Theorem \ref{thm:rho infty L1-cotraction} (set $\rho_1=\rho$, $\rho_2=0$, and $g$ given by \eqref{eqn:nonlocalFKPP g}), in either case we have
\begin{align*}
    \norm{\rho}_{L^1}
    &\leq\norm{f_0(x+z)}_{L^1}+\int_0^T \norm{h(x)\rho}_{L_1} dt\\
    &\leq \pi+\norm{u}_{X}\int_0^T\norm{\rho}_{L^1}dt.
\end{align*}
Finally, we complete the proof by applying Grownwall's inequality.
\end{proof}
With the support of above lemmas, we can easily verify the properties in Proposition \ref{prop:props for Phi and Z}.
\begin{proof}[Proof of Proposition \ref{prop:props for Phi and Z}]
For concision, we omit the superscript $m$ and simply use $\rho$ to denote the density. 

For $(1)$, according to Lemma \ref{lem:measurable lem}, it is sufficient to us to check $\Phi_m(u,y)$ is bounded in each variable. Note that for each component of $\mathcal{G}^m(u)$ we have
\begin{align*}
        l_{j,k}(\rho)
        &=\int_{\Omega}\xi_k(x)\rho(x,t_j)dx\\
        &\leq \norm{\xi_k}_{L^{\infty}(\Omega)}\norm{\rho(t_j)}_{L^1}\\
        &\leq \norm{\xi_k}_{L^{\infty}(\Omega)}\pi e^{\norm{u}_{X}T},
\end{align*}
where we used Lemma \ref{lem:L1 bound of rho}. Thus,
\begin{align}
\label{eqn:upbd for Phi}
    \vert\Phi_m(u,y)\vert
    &=\frac{1}{2}\left\vert\vert\Gamma^{-1/2}(\mathcal{G}^m(u)-y)\vert^2-\vert\Gamma^{-1/2}y\vert^2\right\vert\\\nonumber
    &\leq C\left(\vert\mathcal{G}^m(u)\vert^2+\vert y\vert^2\right)\\\nonumber
    &\leq C\left(e^{2\norm{u}_{X}T}+\vert y\vert^2\right),
\end{align}
Therefore, $\Phi_m(u,y)$ is bounded in each variable and we complete the proof.

For $(2)$, the first inequality hold obviously with
\begin{equation}
\label{eqn: select_M1}
    \Phi_m(u,y)\geq -\frac{1}{2}\left\vert\Gamma^{-1/2}y\right\vert^2\geq -C_{\Gamma}\cdot r^2:=-M_1(r,\norm{u}_{X}),
\end{equation}
where $C_{\Gamma}$ is a constant depend on the covariance matrix $\Gamma$. While, for the second inequality, by using the bounds in part $(1)$, we have
\begin{align*}
    &\vert\Phi_m(u,y_1)-\Phi_m(u,y_2)\vert\\
    &=\frac{1}{2}\left\vert\vert\Gamma^{-1/2}(\mathcal{G}^m(u)-y_1)\vert^2-\vert\Gamma^{-1/2}y_1\vert^2-\vert\Gamma^{-1/2}(\mathcal{G}^m(u)-y_2)\vert^2-\vert\Gamma^{-1/2}y_2\vert^2\right\vert\\
    &\leq C\left(\vert y_1+y_2-2\mathcal{G}^m(u)\vert+\vert y_1+y_2\vert\right)\vert y_1-y_2\vert\\
    &\leq C(r+\pi e^{\norm{u}_X T})\vert y_1-y_2\vert.
\end{align*}
Thus, $M_2(r,\norm{u}_X)$ can be chosen as
\begin{equation*}
    M_2(r,\norm{u}_X)= C(r+\pi e^{\norm{u}_X T})\vert y_1-y_2\vert.
\end{equation*}

For $(3)$, utilizing \eqref{eqn:upbd for Phi}  one can show that for $\mathbb{Q}_0$-a.s.  $\Phi(\cdot,y)$ is bounded on 
\begin{equation}
    X_0=[0,Z_{\max}]^2\times\mathbb{B}_1,
\end{equation}
where $\mathbb{B}_1$ stands for the unit ball in $X_h$. We denote the resulting bound by $M=M(y)$, then
\begin{equation*}
    Z_m\geq \int_{X_0}\exp{(-M)}\mu_0(du)>0,
\end{equation*}
where we used the fact that all balls have positive measure for Gaussian measure on a separable Banach space. 
\end{proof}

Before we establish the formal well-posedness and stability results, we introduce the Hellinger distance.
\begin{defn}
\label{def:H-distance}
Assume $\mu_1$ and $\mu_2$ be two probability measures that both absolutely continuous with respect to $\mu_0$ i.e. $\mu_i\ll\mu_0$ for $i=1,2$, then the Hellinger distance $d_{H}(\mu_1,\mu_2)$ between $\mu_1$ and $\mu_2$ is defined as
\begin{equation*}
    d_{H}(\mu_1,\mu_2)=\left(\frac{1}{2}\int_{X}\left(\sqrt{\frac{d\mu_1}{d\mu_0}}-\sqrt{\frac{d\mu_2}{d\mu_0}}\right)^2 d\mu_0\right)^{1/2}.
\end{equation*}
\end{defn}
Proposition \ref{prop:props for Phi and Z} further yields the following two items.
\begin{thm}[Well-posedness of the posterior distribution]
\label{thm:wellposedness}
Consider the inverse problem of finding $u=(z,h(x))$ from noisy observations of the form \eqref{eqn:noisy observation for m} subject to $\rho^{(m)}$ solving $(P_m')$ ($m\geq 2$), with observational noise $\eta\sim N(0,\Gamma)$. Let $\mu_0$ be the prior measure defined in \eqref{eqn:def mu0} such that $\mu_0(X)=1$, where $X$ is the Banach space defined in \eqref{eqn:def space X}. Then the posterior distribution $\mu_m^{y}$ given by the relation \eqref{eqn:Phi_Z} is a well-defined probability measure.
\end{thm}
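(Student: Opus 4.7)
The plan is to verify the three defining properties of a probability measure for $\mu_m^y$ given by \eqref{eqn:Phi_Z}: (i) the unnormalized density $u\mapsto\exp(-\Phi_m(u,y))$ is $\mu_0$-measurable for $\mathbb{Q}_0$-a.s. $y$; (ii) the normalization constant $Z_m(y)$ lies in $(0,\infty)$ for $\mathbb{Q}_0$-a.s. $y$; (iii) the resulting density integrates to one against $\mu_0$. Each of these reduces to a direct invocation of Proposition \ref{prop:props for Phi and Z}, so the proof amounts to carefully piecing these items together in the standard Dashti--Stuart framework.

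First I would address measurability. Since Proposition \ref{prop:props for Phi and Z}(1) gives that $\Phi_m$ is jointly $\nu_0$-measurable, Fubini's theorem ensures that for $\mathbb{Q}_0$-a.s. $y$ the section $u\mapsto\Phi_m(u,y)$ is $\mu_0$-measurable, whence so is $u\mapsto\exp(-\Phi_m(u,y))$. The same joint measurability also guarantees that $y\mapsto Z_m(y)$ is a well-defined measurable function as the integral in $u$ of a jointly measurable nonnegative integrand.

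Next I would establish finiteness of $Z_m(y)$. Fix $y\in\mathbb{B}_r(0)\subset Y$. The lower bound in Proposition \ref{prop:props for Phi and Z}(2) gives $\exp(-\Phi_m(u,y))\leq\exp(M_1(r,\norm{u}_X))$. Inspecting the explicit choice \eqref{eqn: select_M1} made in the proof of Proposition \ref{prop:props for Phi and Z}, one sees that $M_1(r,\norm{u}_X)=C_\Gamma r^2$ is in fact independent of $\norm{u}_X$. Hence the integrability hypothesis of Proposition \ref{prop:props for Phi and Z}(3) is trivially satisfied, and moreover $Z_m(y)\leq\exp(C_\Gamma r^2)<\infty$ since $\mu_0$ is a probability measure. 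Positivity of $Z_m(y)$ for $\mathbb{Q}_0$-a.s. $y$ is exactly Proposition \ref{prop:props for Phi and Z}(3), proved via the fact that all balls in $X$ carry positive mass under the Gaussian-type prior $\mu_0$.

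With (i)--(ii) in hand, the conclusion (iii) is automatic: the density $u\mapsto Z_m(y)^{-1}\exp(-\Phi_m(u,y))$ is nonnegative, $\mu_0$-measurable, and integrates to unity against $\mu_0$ by the very definition of $Z_m(y)$, so \eqref{eqn:Phi_Z} defines a bona fide probability measure on $X$. I do not anticipate a substantial obstacle; the only mildly delicate point is the joint-in-$(u,y)$ measurability needed to pass freely between the sections, which is dispensed with by Fubini--Tonelli. An identical argument works verbatim for $\mu_\infty^y$ thanks to Remark \ref{rmk:prop for Phi infty}.
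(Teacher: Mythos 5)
Your proposal is correct and follows essentially the same route as the paper, which disposes of the theorem by a one-line remark reducing well-posedness of the Radon--Nikodym relation \eqref{eqn:Phi_Z} to Proposition \ref{prop:props for Phi and Z} within the standard Dashti--Stuart framework. You merely spell out the details the paper leaves implicit (Fubini for the sections, finiteness of $Z_m(y)$ from the $u$-independence of $M_1$, positivity from part (3)), all of which check out.
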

\begin{rem}
The well-posedness of the posterior distribution is equivalent to the well-posedness of the Radon-Nikodym relation in \eqref{eqn:Phi_Z}, which has already been checked in Proposition \ref{prop:props for Phi and Z}.
\end{rem}
\begin{thm}[Stability of the posterior distribution]
\label{thm:stability}
With the same set up as in Theorem \ref{thm:wellposedness}, if we additionally assume that, for every fixed $r>0$,
\begin{equation}
\label{eqn: integrable condition}
  \exp{(M_1(r,\norm{u}_X))}(1+M_2(r,\norm{u}_X)^2)\in L_{\mu_0}^1(X;\mathbb{R}).  
\end{equation}
Then  there exists a positive constant $C(r)$ such that for all $y_1,y_2\in\mathbb{B}_r(0)\subseteq Y$
\begin{equation*}
    d_{H}(\mu_m^{y_1},\mu_m^{y_2})\leq C\vert y_1-y_2\vert.
\end{equation*}
\end{thm}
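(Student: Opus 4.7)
The plan is to follow the standard Dashti--Stuart template for Bayesian well-posedness, using the bounds on $\Phi_m$ that were already established in Proposition \ref{prop:props for Phi and Z} together with the newly imposed integrability hypothesis \eqref{eqn: integrable condition}. Fix $y_1,y_2\in\mathbb{B}_r(0)$ and abbreviate $Z_i\defeq Z_m(y_i)$, $\Phi_i(u)\defeq\Phi_m(u,y_i)$, $i=1,2$.

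The first step is to control the normalization constants from below and above. From part (2) of Proposition \ref{prop:props for Phi and Z}, $\Phi_m(u,y)\ge -M_1(r,\norm{u}_X)$ for $y\in\mathbb{B}_r(0)$, so
\begin{equation*}
Z_i \le \int_X \exp(M_1(r,\norm{u}_X))\,d\mu_0(u),
\end{equation*}
which is finite by \eqref{eqn: integrable condition}. For the lower bound, the argument in part (3) of Proposition \ref{prop:props for Phi and Z} actually gives a bound that is uniform for $y\in \mathbb{B}_r(0)$: since $\Phi_m(\cdot,y)$ is bounded on the bounded set $X_0\subset X$ by a constant depending only on $r$, we obtain $Z_i\ge c(r)>0$. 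Next, using the Lipschitz bound \eqref{eqn: difference between potential fcns} and the fact that $|Z_1-Z_2|$ can be estimated by
\begin{equation*}
|Z_1-Z_2| \le \int_X |\exp(-\Phi_1(u))-\exp(-\Phi_2(u))|\,d\mu_0(u),
\end{equation*}
combined with the elementary inequality $|e^{-a}-e^{-b}|\le (e^{-a}+e^{-b})|a-b|$ and another application of the lower bound $\Phi_i\ge -M_1$, I get
\begin{equation*}
|Z_1-Z_2| \le |y_1-y_2|\int_X \exp(M_1(r,\norm{u}_X))\,M_2(r,\norm{u}_X)\,d\mu_0(u) \le C(r)\,|y_1-y_2|,
\end{equation*}
where finiteness of the integral uses \eqref{eqn: integrable condition} (together with $1+x^2\ge x$, or by applying Cauchy--Schwarz after splitting the integrand).

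For the Hellinger distance itself, I write
\begin{equation*}
\sqrt{\tfrac{d\mu_m^{y_1}}{d\mu_0}}-\sqrt{\tfrac{d\mu_m^{y_2}}{d\mu_0}} = Z_1^{-1/2}\bigl(e^{-\Phi_1/2}-e^{-\Phi_2/2}\bigr) + \bigl(Z_1^{-1/2}-Z_2^{-1/2}\bigr)e^{-\Phi_2/2},
\end{equation*}
square both sides, use $(a+b)^2\le 2a^2+2b^2$, and integrate against $\mu_0$. The first contribution is estimated by applying the elementary bound $|e^{-a/2}-e^{-b/2}|\le \tfrac{1}{2}(e^{-a/2}+e^{-b/2})|a-b|$, the Lipschitz bound \eqref{eqn: difference between potential fcns}, and the lower bound on $\Phi_i$; the resulting integral $\int \exp(M_1)\,M_2^2\,d\mu_0$ is finite by precisely the assumption \eqref{eqn: integrable condition}. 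The second contribution is handled by the elementary inequality $|Z_1^{-1/2}-Z_2^{-1/2}|\le \tfrac{1}{2}\max(Z_1,Z_2)^{-3/2}|Z_1-Z_2|$ together with the uniform lower bound on $Z_i$ and the estimate on $|Z_1-Z_2|$ obtained above. Collecting both contributions produces the desired bound $d_H(\mu_m^{y_1},\mu_m^{y_2})\le C(r)|y_1-y_2|$.

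The main obstacle is not conceptual but bookkeeping: one must verify that all integrals against $\mu_0$ are finite, which is exactly the role of the hypothesis \eqref{eqn: integrable condition}; in particular, the term $\exp(M_1)M_2^2$ appearing after squaring must be integrable, which is why the assumption is phrased with the factor $(1+M_2^2)$. A minor subtlety worth confirming is that the lower bound on $Z_m(y)$ remains uniform for $y\in\mathbb{B}_r(0)$, which, as noted, follows by inspecting the proof of Proposition \ref{prop:props for Phi and Z}(3) and noting that the bound $M=M(y)$ constructed there depends on $y$ only through $|y|\le r$.
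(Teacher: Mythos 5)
Your proposal is correct and follows exactly the standard Dashti--Stuart argument that the paper itself invokes (the paper only sketches this proof, citing Section 4 of \cite{dashti2017bayesian}): decompose the difference of the square roots of the Radon--Nikodym densities, bound the normalization constants above and below uniformly for $y\in\mathbb{B}_r(0)$, and combine the Lipschitz estimate \eqref{eqn: difference between potential fcns} with the integrability hypothesis \eqref{eqn: integrable condition}. The only slip is the elementary inequality for the normalization constants, which should read $\vert Z_1^{-1/2}-Z_2^{-1/2}\vert\leq\tfrac{1}{2}\min(Z_1,Z_2)^{-3/2}\vert Z_1-Z_2\vert$ rather than with $\max$; this is immaterial here since both $Z_1$ and $Z_2$ are bounded below by the uniform constant $c(r)>0$ you established.
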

 Regarding the integrability condition \eqref{eqn: integrable condition}, it is worth noting that the function $M_1$ can be chosen independent of $\Vert u\Vert_X$ as specified in \eqref{eqn: select_M1}. Thus, one can apply the Fernique theorem (see Theorem 7.25 in \cite{dashti2017bayesian}) to obtain \eqref{eqn: integrable condition}.
 
 The proof of Theorem \ref{thm:stability} is standard (see Section 4 of \cite{dashti2017bayesian}), so we only describe the main idea but without providing a detailed proof. By a direct calculation, $d_{H}(\mu_m^{y_1},\mu_m^{y_2})$ can be present as an integral in terms of $\vert\Phi_m(u,y_1)-\Phi_m(u,y_2)\vert$ with respect to the prior measure. Then one can complete the proof by applying estimate in \eqref{eqn: difference between potential fcns} and the integrable condition \eqref{eqn: integrable condition}.

 Finally, we remark that according to Remark \ref{rmk:prop for Phi infty}, the above two theorems (well-posedness and stability) hold for problem $(P_\infty')$ similarly.

\subsection{Convergence of the posterior distribution}
In this section, to further exclude the possibility that the posterior distribution for $(P_m')$ diverges as $m$ tends to infinity, we show that $\mu_m^y$ indeed converges to $\mu_{\infty}^y$ in the sense of the Hellinger distance. The formal statement is presented in Theorem \ref{thm:convergence of posterior} below. And we emphasize that the incompressible limit of the forward problems yields pointwise convergence of the potential function $\Phi_m(u,y)$, which plays a crucial role in the proof of Theorem \ref{thm:convergence of posterior}.

\begin{thm}
\label{thm:convergence of posterior}
For any $y\in Y$ and $u\sim\mu_0$, let $\mu_m^{y}$ and $\mu_{\infty}^{y}$ be the posterior distribution respect to $(P_m')$ and $(P_\infty')$, then
\begin{equation}
    d_{H}(\mu_m^{y},\mu_{\infty}^y)\rightarrow 0\qquad\text{as}\quad m\rightarrow\infty.
\end{equation}
And for any $\epsilon>0$, there exists $M>0$ such that 
\begin{equation}
    d_{H}(\mu_{m_1}^{y},\mu_{m_2}^y)<\epsilon,\quad\text{for any}\quad m_1,m_2>M.
\end{equation}
\end{thm}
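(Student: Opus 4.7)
\medskip

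\noindent\textbf{Proof plan.} The strategy is the standard Hellinger-distance estimate between two posterior measures that are both absolutely continuous with respect to the common prior $\mu_0$, combined with the forward incompressible-limit result from Theorem \ref{thm:L1 convergence thm}. Starting from Definition \ref{def:H-distance} and adding/subtracting the mixed term $Z_m^{-1/2}\exp(-\Phi_\infty/2)$, one obtains the decomposition
\begin{equation*}
2\,d_{H}(\mu_m^{y},\mu_{\infty}^{y})^{2}
\;\leq\;
\frac{2}{Z_m}\int_{X}\bigl(e^{-\Phi_m(u,y)/2}-e^{-\Phi_\infty(u,y)/2}\bigr)^{2}\,d\mu_0(u)
\;+\;
2\,Z_\infty\Bigl(Z_m^{-1/2}-Z_\infty^{-1/2}\Bigr)^{2}.
\end{equation*}
Thus it suffices to prove (i) $Z_m\to Z_\infty$ and (ii) the first integral vanishes as $m\to\infty$, after which the $L^1$-to-$L^\infty$ control of $1/\sqrt{Z_m}$ finishes the bound.

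\medskip

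The decisive input is pointwise convergence $\Phi_m(u,y)\to\Phi_\infty(u,y)$ for $\mu_0$-a.e.\ $u$. Fix such a $u=(z,h)$; then $h\in L^{\infty}(\Omega)$ by the discussion following Assumption \ref{assump:random f and g}, so Assumption \ref{assump: f and g} applies to $g(x,t,\rho)=h(x)\rho$, and Theorem \ref{thm:L1 convergence thm} yields $\rho^{(m)}(\hat u)\to\rho^{(\infty)}(\hat u)$ in $\mathcal{C}((0,T);L^{1}(\Omega))$. Since each observation functional satisfies
\begin{equation*}
\bigl|l_{j,k}(\rho^{(m)})-l_{j,k}(\rho^{(\infty)})\bigr|\;\leq\;\|\xi_k\|_{L^\infty(\Omega)}\,\bigl\|\rho^{(m)}(t_j)-\rho^{(\infty)}(t_j)\bigr\|_{L^1(\Omega)},
\end{equation*}
we get $\mathcal{G}^{m}(u)\to\mathcal{G}^{\infty}(u)$ in $Y$, and hence $\Phi_m(u,y)\to\Phi_\infty(u,y)$ by continuity of the quadratic expression in \eqref{eqn:Phi_m}.

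\medskip

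To upgrade pointwise convergence to $L^1(\mu_0)$ convergence (for $Z_m\to Z_\infty$) and to the squared-root $L^2(\mu_0)$ convergence above, I will apply dominated convergence. By Proposition \ref{prop:props for Phi and Z}(2) together with Remark \ref{rmk:prop for Phi infty}, both $\Phi_m$ and $\Phi_\infty$ share the lower bound $\Phi\geq -M_1(r,\|u\|_X)$ with $M_1$ independent of $\|u\|_X$ (taking $r$ so that $y\in\mathbb{B}_r(0)$). Consequently $e^{-\Phi_m},\, e^{-\Phi_\infty}\leq e^{M_1(r)}$ uniformly in $m$, which is a $\mu_0$-integrable dominating function for the sequence $\{e^{-\Phi_m}\}$; dominated convergence yields $Z_m\to Z_\infty>0$. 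For the squared integral, the elementary inequality
\begin{equation*}
\bigl(e^{-a/2}-e^{-b/2}\bigr)^{2}\;\leq\;\tfrac{1}{4}|a-b|^{2}\bigl(e^{-a}+e^{-b}\bigr)
\end{equation*}
combined with the same upper envelope $e^{-\Phi_m}+e^{-\Phi_\infty}\leq 2e^{M_1(r)}$ and the uniform bound $|\Phi_m(u,y)|,|\Phi_\infty(u,y)|\leq C(e^{2\|u\|_X T}+|y|^{2})$ from the proof of Proposition \ref{prop:props for Phi and Z}(1) produces a dominating function of the form $C(r)\,e^{4\|u\|_X T}$ (plus constants). Since $\|u\|_X\leq Z_{\max}+\|h\|_{L^{\infty}(\Omega)}$, Fernique's theorem on the Gaussian prior $\mu_0^{h}$ implies $e^{C\|u\|_X T}\in L^{1}(\mu_0)$ for every $C$, so dominated convergence gives the desired vanishing.

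\medskip

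Combining (i) and (ii) with the uniform positivity of $Z_m$ established above yields $d_{H}(\mu_m^{y},\mu_{\infty}^{y})\to 0$. The Cauchy statement follows immediately from the triangle inequality for $d_H$, since convergence to the limit $\mu_\infty^{y}$ forces $d_{H}(\mu_{m_1}^{y},\mu_{m_2}^{y})\leq d_{H}(\mu_{m_1}^{y},\mu_{\infty}^{y})+d_{H}(\mu_{m_2}^{y},\mu_{\infty}^{y})<\varepsilon$ for $m_1,m_2$ large. I expect the main technical obstacle to be verifying that the dominating function indeed lies in $L^{1}(\mu_0)$: this hinges on the fact that $M_1$ can be chosen \emph{independent} of $\|u\|_X$ (so that $e^{M_1}$ is a constant, not an exponential of a Gaussian norm), which is precisely the feature of $(P_m')$ exploited in \eqref{eqn: select_M1} and which makes the uniform-in-$m$ $L^1$-contraction of Theorem \ref{thm:rho m L1-cotraction} indispensable.
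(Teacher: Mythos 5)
Your proposal is correct and follows essentially the same route as the paper: the identical Hellinger decomposition into the $Z_m^{-1}\int_X\bigl(e^{-\Phi_m/2}-e^{-\Phi_\infty/2}\bigr)^2\,d\mu_0$ term and the $\vert Z_m^{-1/2}-Z_\infty^{-1/2}\vert^2$ term, the same pointwise convergence $\Phi_m\to\Phi_\infty$ deduced from the forward $L^1$ limit of Theorem \ref{thm:L1 convergence thm} through the observation functionals, and dominated convergence to conclude. The only (immaterial) difference is in the dominating function for the DCT step: the paper exploits the nonnegativity of the shifted potentials $\tilde{\Phi}_m=\frac{1}{2}\vert\Gamma^{-1/2}(\mathcal{G}^m(u)-y)\vert^2\geq 0$ to bound the integrands by constants after splitting on $\vert\tilde{\Phi}_m-\tilde{\Phi}_\infty\vert\lessgtr 1$, whereas you combine a mean-value inequality with the exponential upper bound on $\vert\Phi_m\vert$ and Fernique's theorem --- both are valid, though the paper's choice avoids invoking Fernique in this particular step.
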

\begin{cor}
Given $y_1,y_2\in Y$, there exists $M>0$ such that 
\begin{equation*}
    d_{H}(\mu_{m_1}^{y_1},\mu_{m_2}^{y_2})<C\vert y_1-y_2\vert ,\quad\text{for any}\quad m_1,m_2>M.
\end{equation*}
\end{cor}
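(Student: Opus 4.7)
The plan is to derive the corollary from the two preceding tools: the stability of the posterior at the limit level (Theorem \ref{thm:stability} applied to $(P_\infty')$, valid by the remark at the end of Section~3.3) and the convergence of posteriors as $m\to\infty$ (Theorem \ref{thm:convergence of posterior}). The natural strategy is a three-term triangle-inequality decomposition around the limiting posteriors $\mu_\infty^{y_1}$ and $\mu_\infty^{y_2}$, which are the common objects linking the stability estimate (which requires fixing the index) to the convergence estimate (which requires fixing the datum).

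Concretely, I would first write
\begin{equation*}
 d_H(\mu_{m_1}^{y_1},\mu_{m_2}^{y_2})
 \;\le\; d_H(\mu_{m_1}^{y_1},\mu_{\infty}^{y_1})
 \;+\; d_H(\mu_{\infty}^{y_1},\mu_{\infty}^{y_2})
 \;+\; d_H(\mu_{\infty}^{y_2},\mu_{m_2}^{y_2}),
\end{equation*}
using that the Hellinger distance is a genuine metric on probability measures absolutely continuous with respect to $\mu_0$, which is guaranteed by Theorem \ref{thm:wellposedness} applied at each of the indices $m_1$, $m_2$, and $\infty$. Next, I would bound the middle term by Theorem \ref{thm:stability} for the limit problem (using the fact, noted in Remark \ref{rmk:prop for Phi infty}, that the $M_1,M_2$ controls are uniform in the index) to get $d_H(\mu_\infty^{y_1},\mu_\infty^{y_2})\le C|y_1-y_2|$ whenever $y_1,y_2\in\mathbb{B}_r(0)$. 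Finally, I would invoke Theorem \ref{thm:convergence of posterior} with both data $y_1$ and $y_2$ fixed: for any prescribed $\epsilon>0$ there is $M=M(\epsilon,y_1,y_2)$ so that the two outer terms are each smaller than $\epsilon/2$ whenever $m_1,m_2>M$, yielding
\begin{equation*}
 d_H(\mu_{m_1}^{y_1},\mu_{m_2}^{y_2})\le C|y_1-y_2|+\epsilon
\end{equation*}
and hence the claimed bound once $\epsilon$ is absorbed (or, equivalently, once $M$ is taken large enough that the residual $\epsilon$ is dominated by the stability term when $y_1\ne y_2$; if one prefers to keep the strict inequality uniform in $y_1,y_2$ one states the result with $C|y_1-y_2|+\epsilon$ on the right).

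The main potential obstacle is making the choice of $M$ manifestly independent of the pair $(y_1,y_2)$ inside a given ball $\mathbb{B}_r(0)$, because Theorem \ref{thm:convergence of posterior} is stated pointwise in $y$. To address this I would inspect the proof of that theorem and verify that the rate of convergence $d_H(\mu_m^y,\mu_\infty^y)\to 0$ is locally uniform in $y$, which should follow from dominated convergence applied to the explicit integrand $\sqrt{d\mu_m^y/d\mu_0}-\sqrt{d\mu_\infty^y/d\mu_0}$ together with the $M_1$-bound on $\Phi_m$ (uniform in $m$) and the pointwise convergence $\Phi_m(u,y)\to\Phi_\infty(u,y)$ inherited from the $L^1$-convergence of Theorem \ref{thm:L1 convergence thm}. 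With that uniformity in hand, the same $M$ works for both endpoints and the triangle inequality argument closes cleanly.
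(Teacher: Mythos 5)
Your proposal is correct and matches the paper's own (very brief) argument, which likewise derives the corollary from Theorem \ref{thm:stability} and Theorem \ref{thm:convergence of posterior} via the triangle inequality through the limiting posteriors. Your worry about choosing $M$ uniformly over a ball of data is unnecessary here, since the statement only asserts the existence of $M$ for the given pair $(y_1,y_2)$, so the pointwise form of Theorem \ref{thm:convergence of posterior} suffices.
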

The above corollary directly follows from Theorem \ref{thm:stability} and Theorem \ref{thm:convergence of posterior} with triangle inequality. 
Now we turn to the proof of Theorem \ref{thm:convergence of posterior}, we first show that the convergence of the forward problem yields pointwise convergence of the potential function $\Phi_m(u,y)$.
\begin{lem}
\label{lem:convergence of potential}
For any $u\in X$ and $y\in Y$, let $\Phi_m$ and $\Phi_{\infty}$ be the potential functions for $(P_m')$ and $(P_\infty')$ defined in \eqref{eqn:Phi_m}, then
\begin{equation*}
     \lim_{m\rightarrow\infty}\vert \Phi_m(u,y)-\Phi_{\infty}(u,y)\vert = 0.
\end{equation*}
\end{lem}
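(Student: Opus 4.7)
The plan is to reduce the difference of potentials to the difference of the forward operators evaluated at the same input, and then exploit the $L^1$ convergence of the density solutions provided by Theorem \ref{thm:L1 convergence thm}. Using the identity $|a|^2 - |b|^2 = \langle a-b, a+b\rangle$, I would first write
\begin{equation*}
\Phi_m(u,y) - \Phi_\infty(u,y) = \tfrac{1}{2}\bigl\langle \Gamma^{-1}(\mathcal{G}^m(u) - \mathcal{G}^\infty(u)),\; \mathcal{G}^m(u) + \mathcal{G}^\infty(u) - 2y\bigr\rangle,
\end{equation*}
and bound this by Cauchy--Schwarz in terms of $|\mathcal{G}^m(u) - \mathcal{G}^\infty(u)|$ times a factor depending on $|\mathcal{G}^m(u)|$, $|\mathcal{G}^\infty(u)|$, $|y|$, and the operator norm of $\Gamma^{-1}$.

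Next I would control the two factors separately. For the second factor, applying Lemma \ref{lem:L1 bound of rho} component-wise to each $l_{j,k}(\rho^{(m)})$ and $l_{j,k}(\rho^{(\infty)})$, together with $|l_{j,k}(\rho)| \leq \|\xi_k\|_{L^\infty}\|\rho(t_j)\|_{L^1}$, yields a bound of the form $|\mathcal{G}^m(u)| + |\mathcal{G}^\infty(u)| \leq C(u,T)$ uniformly in $m$; combined with the fixed $|y|$ this makes the factor harmless. The main work then lies in showing that the first factor $|\mathcal{G}^m(u) - \mathcal{G}^\infty(u)|$ tends to $0$.

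For that, I would use the explicit form of the observation functionals. For each $(j,k)$,
\begin{equation*}
|l_{j,k}(\rho^{(m)}) - l_{j,k}(\rho^{(\infty)})| = \Bigl|\int_\Omega \xi_k(x)\bigl(\rho^{(m)}(x,t_j) - \rho^{(\infty)}(x,t_j)\bigr)\,dx\Bigr| \leq \|\xi_k\|_{L^\infty(\Omega)}\,\|\rho^{(m)}(t_j) - \rho^{(\infty)}(t_j)\|_{L^1(\Omega)}.
\end{equation*}
By Theorem \ref{thm:L1 convergence thm}, $\rho^{(m)} \to \rho^{(\infty)}$ in $\mathcal{C}((0,T);L^1(\Omega))$, so every such term vanishes as $m\to\infty$. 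Since $J$ and $K$ are finite, summing over the finitely many components delivers $|\mathcal{G}^m(u) - \mathcal{G}^\infty(u)|\to 0$.

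I do not anticipate a genuine obstacle here: the Hele-Shaw limit (Theorem \ref{thm:L1 convergence thm}) does all the heavy lifting, and the quadratic structure of $\Phi_m$ is the only reason some care is needed. The main subtlety is merely to verify that the prefactor arising from expanding the quadratic difference is bounded uniformly in $m$, which is immediate from the uniform $L^1$ bound in Lemma \ref{lem:L1 bound of rho}; this same observation is what lets one upgrade pointwise convergence of the potentials to convergence of the posterior measures in the subsequent proof of Theorem \ref{thm:convergence of posterior}.
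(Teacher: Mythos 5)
Your proposal is correct and follows essentially the same route as the paper's proof: expand the quadratic difference of potentials, bound the prefactor uniformly in $m$ via the $L^1$ bound of Lemma \ref{lem:L1 bound of rho}, and send $|\mathcal{G}^m(u)-\mathcal{G}^\infty(u)|\to 0$ component-wise using $|l_{j,k}(\rho^{(m)})-l_{j,k}(\rho^{(\infty)})|\leq\|\xi_k\|_{L^\infty}\|\rho^{(m)}(t_j)-\rho^{(\infty)}(t_j)\|_{L^1}$ together with part (1) of Theorem \ref{thm:L1 convergence thm}. No substantive differences.
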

\begin{proof}
Direct compute the difference between $\Phi_m(u,y)$ and $\Phi_{\infty}(u,y)$ to get
\begin{align*}
    \vert\Phi_m(u,y)-\Phi_{\infty}(u,y)\vert
    &=\frac{1}{2}\vert\Gamma^{-1/2}(y-\mathcal{G}^m(u))\vert^2-\frac{1}{2}\vert\Gamma^{-1/2}(y-\mathcal{G}^\infty(u))\vert^2\\
    &\leq C\vert 2y-\mathcal{G}^m(u)-\mathcal{G}^\infty(u)\vert\cdot\vert\mathcal{G}^m(u)-\mathcal{G}^\infty(u)\vert\\
    &\leq C(\vert y\vert+\pi e^{\norm{u}_X T})\vert\mathcal{G}^m(u)-\mathcal{G}^\infty(u)\vert.
\end{align*}
Observe that for each component of $\vert\mathcal{G}^m(u)-\mathcal{G}^\infty(u)\vert$ we have 
\begin{align*}
    \vert l_{j,k}(\rho^{(m)}(u))-l_{j,k}(\rho^{(\infty)}(u))\vert
    &\leq \int_{\Omega}\left\vert\xi_k(x)\left(\rho^{(m)}(x,t_j)-\rho^{(\infty)}(x,t_j)\right)\right\vert dx\\
    &\leq \norm{\xi_k}_{L^{\infty}(\Omega)}\norm{\rho^{(m)}(\cdot,t_j)-\rho^{(\infty)}(\cdot,t_j)}_{L^1(\Omega)}
\end{align*}
Thus by Theorem \ref{thm:L1 convergence thm} part $(1)$, we can conclude 
\begin{equation*}
\label{eqn:limit of Phi_m and P_infty}
    \lim_{m\rightarrow\infty}\vert\Phi_m(u,y)-\Phi_{\infty}(u,y)\vert=0.
\end{equation*}
\end{proof}
We now proceed to the proof of Theorem \ref{thm:convergence of posterior}. We would like to clarify that the proof is similar to that of stability (see Theorem 4.5 in \cite{dashti2017bayesian}). We emphasize the differences here. In the proof of stability, one needs to estimate the difference between $\vert\Phi_m(u,y_1)-\Phi_m(u,y_2)\vert$ and via check the integrability condition \eqref{eqn: integrable condition} to complete the proof. However, in the proof of Theorem \ref{thm:convergence of posterior}, one obtains a sequence of probability integrals involved with $\vert\Phi_m(u,y)-\Phi_{\infty}(u,y)\vert$, which possess a uniform upper bound with respect to $m$. Therefore, one can direct complete the proof by applying Lemma \ref{lem:convergence of potential} and the dominant convergence theorem.
\begin{proof}[Proof of Theorem \ref{thm:convergence of posterior}]
Let $Z_m(y)$ and $Z_{\infty}(y)$ denote the normalization constants for $\mu_m^{y}$ and $\mu_{\infty}^y$ so that 
\begin{align*}
    Z_m &=\int_{X}\exp{\left(-\Phi_m(u,y)\right)}\mu_0(du)>0,\\
    Z_{\infty} &=\int_{X}\exp{\left(-\Phi_{\infty}(u,y)\right)}\mu_0(du)>0.
\end{align*}
we checked $Z_m>0$ in Proposition \ref{prop:props for Phi and Z}, and the strict positivity of $Z_\infty$ can be shown in a similar way. Let $\Tilde{\Phi}_m(u,y)$ denote the positive part of $\Phi_m(u,y)$ in \eqref{eqn:Phi_m}, that is
\begin{equation}
    \Tilde{\Phi}_m(u,y)=\frac{1}{2}\vert\Gamma^{-1/2}(\mathcal{G}^m(u)-y)\vert^2>0,
\end{equation}
and define $\Tilde{\Phi}_{\infty}(u,y)$ similarly. Let $\one_{\text{E}}$ denote the indicator function for the event $E$. Then by a direct calculation we get 
\begin{align*}
    \vert Z_m-Z_{\infty}\vert
    &\leq\exp{(\frac{1}{2}\vert\Gamma^{-1/2}y\vert^2)}\int_{X}\left\vert\exp{(-\Tilde{\Phi}_m)}-\exp{(-\Tilde{\Phi}_{\infty})}\right\vert \mu_0(du)\\
     &\leq C\int_{X}\left(\one_{\vert\Tilde{\Phi}_{m}-\Tilde{\Phi}_{\infty}\vert\leq 1}+\one_{\vert\Tilde{\Phi}_{m}-\Tilde{\Phi}_{\infty}\vert>1}\right)\left\vert\exp{(-\Tilde{\Phi}_m)}-\exp{(-\Tilde{\Phi}_{\infty})}\right\vert \mu_0(du)\\
    &\leq C\int_{X}\one_{\vert\Tilde{\Phi}_{m}-\Tilde{\Phi}_{\infty}\vert\leq 1}\cdot\exp{(-\Tilde{\Phi}_{\infty})}\cdot\Big\vert\exp{(-(\Tilde{\Phi}_{m}-\Tilde{\Phi}_{\infty}))}-1\Big\vert\mu_0(du)\\
    &\quad+C\int_X\one_{\vert\Tilde{\Phi}_{m}-\Tilde{\Phi}_{\infty}\vert>1}\cdot\left\vert\exp{(-\Tilde{\Phi}_m)}-\exp{(-\Tilde{\Phi}_{\infty})}\right\vert\mu_0(du)\\
    &\leq C\int_{X}\one_{\vert\Tilde{\Phi}_{m}-\Tilde{\Phi}_{\infty}\vert\leq 1}\cdot\exp{(-\Tilde{\Phi}_{\infty})}\left(\vert\Tilde{\Phi}_{m}-\Tilde{\Phi}_{\infty}\vert+O(\vert\Tilde{\Phi}_{m}-\Tilde{\Phi}_{\infty}\vert^2)\right)\mu_0(du)\\
    &\quad+C\int_X\one_{\vert\Tilde{\Phi}_{m}-\Tilde{\Phi}_{\infty}\vert>1}\cdot\left(\exp{(-\Tilde{\Phi}_{m})}+\exp{(-\Tilde{\Phi}_{\infty})}\right)\mu_0(du)\\
    &\coloneqq\mathcal{P}_1+\mathcal{P}_2.
\end{align*}
Note that by using the fact that $\Tilde{\Phi}_{\infty}$ and $\Tilde{\Phi}_{m}$ are both positive, one can easily check $\mathcal{P}_1$ and $\mathcal{P}_2$ are both integrable, and possess uniform upper bounds with respect to $m$. Thus by the dominated converge theorem (DCT) and Lemma \ref{lem:convergence of potential}, we get
\begin{equation}
\label{eqn:zm to z_infty}
    \lim_{m\rightarrow\infty}\vert Z_m-Z_{\infty}\vert = 0.
\end{equation}
Since both $\mu_m^{y}$ and $\mu_{\infty}^{y}$ are absolutely continuous with respect to $\mu_0$, by the definition of Hellinger distance we have
\begin{equation*}
    \left( d_{H}(\mu_m^{y},\mu_{\infty}^y)\right)^2\leq I_m^1+I_m^2,
\end{equation*}
where
\begin{align*}
    I_m^1&=\frac{1}{Z_m}\int_{X}\left(\exp{(-\frac{1}{2}\Phi_m(u,y))}-\exp{(-\frac{1}{2}\Phi_{\infty}(u,y))}\right)^2 \mu_0(du),\\
    I_m^2&=\vert Z_m^{-1/2}-Z_{\infty}^{-1/2}\vert^2 \int_{X}\exp{(-\Phi_{\infty}(u,y))} \mu_0(du).
\end{align*}
By using a similar argument used to show \eqref{eqn:zm to z_infty}, one can also split the integral $I_m^1$ into the sets where $\vert \Tilde{\Phi}_{m}-\Tilde{\Phi}_{\infty}\vert\leq 1$ and $\vert \Tilde{\Phi}_{m}-\Tilde{\Phi}_{\infty}\vert>1$. Then by using the fact that $\Tilde{\Phi}_{\infty}$ and $\Tilde{\Phi}_{m}$ are both positive, one can apply DCT to show $\lim_{m\rightarrow\infty}I_m^1=0$ similarly. And for $I_m^2$, we have
\begin{equation*}
    \lim_{m\rightarrow\infty}I_m^2\leq\lim_{m\rightarrow\infty}\left(Z_{m}^{-3}\vee Z_{\infty}^{-3}\right)\vert Z_m-Z_{\infty}\vert^2=0.
\end{equation*}
By now, we have completed the proof of the first part of Theorem \ref{thm:convergence of posterior}, and the second part directly follows from the triangle inequality.
\end{proof}

\section{Numerical Experiments}
\label{sec: Numerical Experiments}

In this section, we aim to carry out systematic numerical experiments to illustrate the properties of the unified numerical method for the Bayesian inversion problems that we have constructed. In particular, we aim to show that the method is able to produce uniformly accurate parameter inferences with respect to the physical index $m$ and the noise level $\sigma$ as well as a quantitative study of the numerical error with various sample sizes.


\subsection{Numerical tests setting}

In our numerical experiments, we consider the tumor growth model in 2D: 
\begin{equation}
\left\{ 
\begin{array}{ll}
\partial_t \rho + \nabla \cdot (\rho  \mathbf{v}) = h( \mathbf{x})\rho, \qquad \mathbf{x} \in \Omega=[a,b]\times [a,b], \\[4pt]
\rho(\mathbf{x},0) = \rho_0(\mathbf{x}), \qquad \mathbf{x} \in \Omega, 
\end{array}
\right.
\end{equation}
with no-flux boundary condition $\rho \mathbf{v} = 0$ for $\mathbf{x}\in \partial\Omega$. 
Here $\mathbf{v}$ is determined by the gradient of the pressure 
$p(\mathbf{x},t)$ which is related to a power of density $\rho(\mathbf{x},t)$, precisely
$$ \mathbf{v} = - \nabla p, \qquad p = \frac{m}{m-1}\rho^{m-1}, \quad, m>1. $$

We first introduce how we measure the accuracy of our numerical algorithm. As an illustrative example, let $u$ be the parameter of interest and the posterior samples generated from the Metropolis-Hastings MCMC method is denoted by $\{u_i\}_{i=1}^N$, with $N$ the sample size after $25 \%$ of burn-in phase (we denote $M$ below as the sample size before the burn-in phase). Since the MCMC approach is a sampling method, we need to repeatedly run the simulation and take the average, in order to improve the accuracy of the algorithm. Set the simulation runs to be $K$ ($K=15$ in our tests), then we estimate the expected value of posterior $h$ by 
\begin{equation} \mathbb{E}(\bar u) \approx \frac{1}{K} \sum_{k=1}^K \bar{u}^{(k)} = \frac{1}{K} \frac{1}{N} \sum_{k=1}^K \sum_{i=1}^N  u_i^{(k)}, 
\end{equation}
where $\{u_i^{(k)}\}_{i=1}^N$ are the posterior samples obtained by $k$-th simulation run for the MCMC algorithm, and 
\begin{equation}\label{G_bar}
\bar u^{(k)} = \frac{1}{N} \sum_{i=1}^N u_i^{(k)}
\end{equation}
is the corresponding estimator for the mean value. 
To compare the distance between $\mathbb{E}(\bar u)$ and the true data $u^{\ast}$ which is assumed known, the mean squared error is evaluated as the following: 
$$ \text{MSE} := \mathbb{E}\left[ (\bar u - u^{\ast})^2 \right] \approx \frac{1}{K} \sum_{k=1}^K \left(\bar u^{(k)} - u^{\ast}\right)^2. $$

\subsection{Numerical experiments}
\subsubsection{Test 1}

In this test, we assume that the growth rate $h$ is spatially homogeneous, and it is only the unknown parameter to be inferred. Let the computational domain be
$\Omega = [-2.2,2.2]\times [-2.2,2.2]$, set the spatial step $\Delta x = \Delta y = 0.1$ and temporal step $\Delta t=0.005$. 
In all of our tests, the Gaussian noise is assumed to follow the distribution $N(0,\sigma^2)$, and we set 
$m=40$ unless otherwise specified. 

\vspace{2mm}

\noindent\textbf{Test 1 (a)}
Consider the initial data 
\begin{equation}
\rho(x,y,0) = \begin{cases}
                  0.9, & \sqrt{x^2 + y^2} - 0.5 - 0.5\sin(4\arctan(\frac{y}{x})) <0,  \\[4pt]
                  0, & \text{otherwise}, 
                  \end{cases} 
\end{equation}
for $(x,y)\in\Omega$. 
Assume the prior distribution for the constant growth rate $h$ is the Gaussian distribution $N(\mu, c_0^2)$ with $\mu=c_0=0.5$. 
Let the true $h^{\ast}=1$, and the observation data be the density at time $T=0.5$ added by the Gaussian noise. 

\begin{table}[htbp]
\begin{tabular}{c c c c c c}
\hline
$\sigma$ & 0.05 & 0.1  &  0.2  &  0.4   \\
$\mathbb{E}(h)$ & 1.0042 &  0.9863 &  0.966 & 0.8253   \\
$\text{MSE}(h)$ & 0.0042  & 0.0174   & 0.034  & 0.1747  \\
\hline
\end{tabular}
\caption{Test 1 (a). Errors for different $\sigma$, by using $M=1000$. }
\label{Test1a_table1}
\end{table}

\begin{table}[htbp]
\begin{tabular}{c c c c c c}
\hline
& $M$ & 100 & 200 & 400 & 800  \\
$\text{MSE}(h)$ & $\sigma=0.1$ &  0.4039 & 0.2799 & 0.0688 & 0.0308   \\
$\text{MSE}(h)$ & $\sigma=1$ &  0.3039 & 0.1085 & 0.0486 & 0.0377 \\
\hline
\end{tabular}
\caption{Test 1 (a).  Error convergence with respect to sample size $M$ for $\sigma=0.1$ and $\sigma=1$, respectively. }
\label{Test1a_table2}
\end{table}

\begin{figure}[H]
	\centering
	\includegraphics[width=0.48\textwidth]{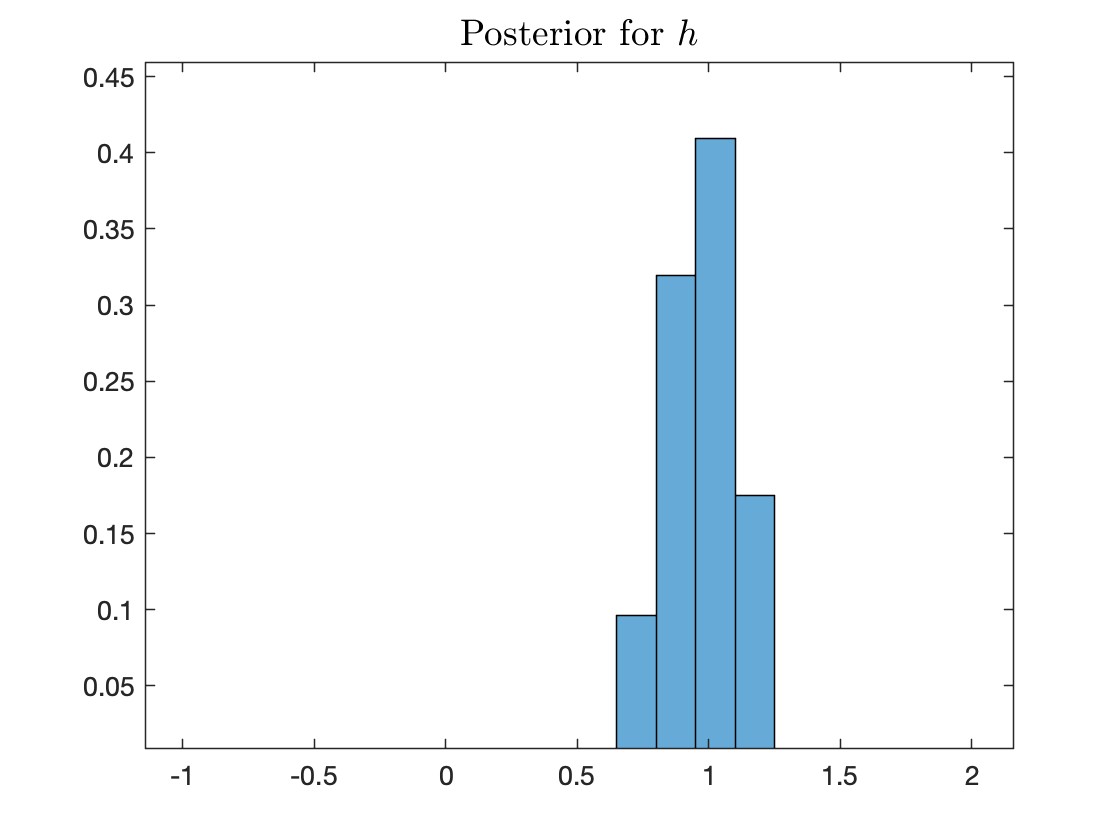}
	\includegraphics[width=0.48\textwidth]{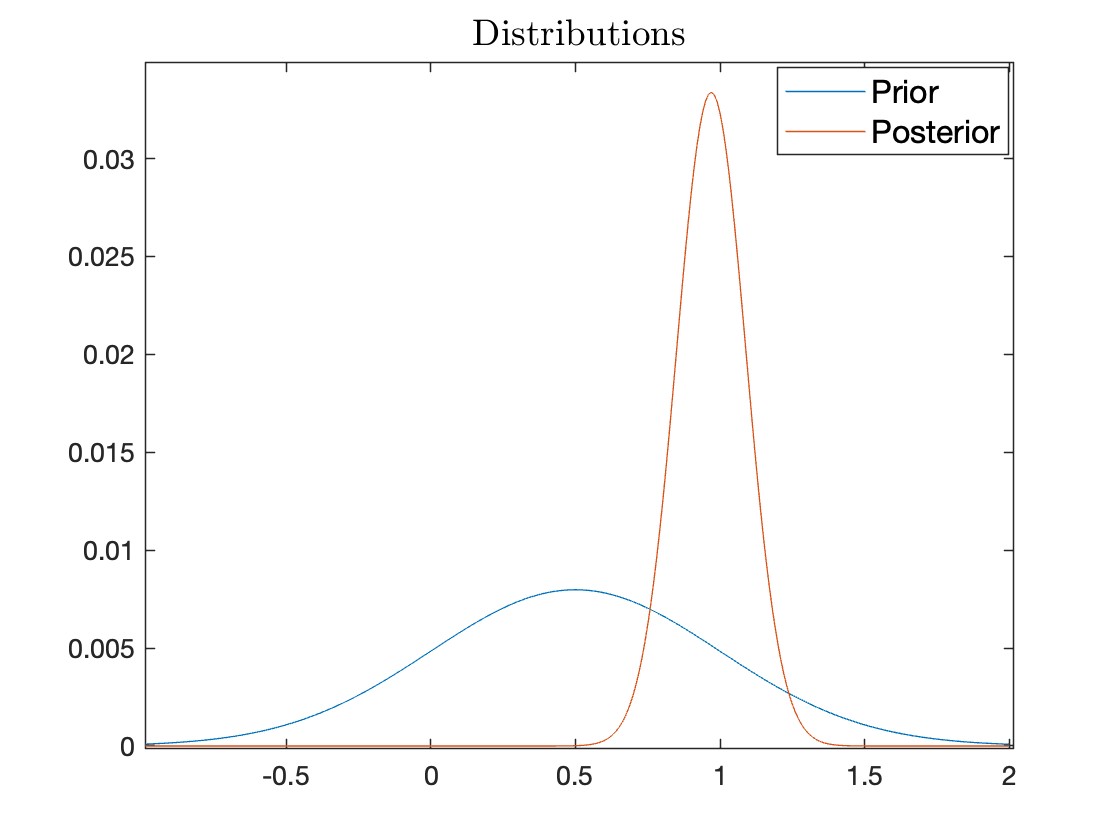}
	\caption{Test 1 (a) with $\sigma=0.1$, by using $M=800$. Left: histogram for the posterior samples. Right: prior and posterior distributions for $h$. }
 \label{Test1a_fig}
\end{figure}
In Table \ref{Test1a_table1}, we fix the physical index $m=40$ and the number of iterations $M=1000$, while letting the noise level $\sigma$ vary. One can observe the accuracy is improved as $\sigma$ decreases, with the level of mean square error of $O(10^{-1})$ to $O(10^{-3})$. This also implies as the noise level is relatively small, the numerical method correctly captures the quantity of interest with satisfactory accuracy. 

In Table \ref{Test1a_table2}, for different $\sigma$ we test by adopting different numbers of sampling iterations $M$. As $M$ increases from $100$ to $800$, the level of mean square errors decreases from $O(10^{-1})$ to $O(10^{-2})$, which is expected due to the decrease of the sampling error.

In Fig. \ref{Test1a_fig}, we plot the histogram for the posterior samples for the parameter $h$ and see how the data is accumulated around the true value $h^{\ast}=1$. A comparison between the prior and posterior distributions for $h$ is shown on the right, with the prior as the Gaussian distribution. 

\vspace{2mm}

\noindent\textbf{Test 1 (b)}  
In this test, we consider the observation data as the density convoluted with Gaussian functions plus noise, which are to model the blurry and noisy observations. 
The centers of the Gaussian functions are chosen to be the grid points $(x_i,y_j)$, where 
$$
\quad i\in \{16, 20, 22, 24, 24, 26, 27, 28, 32\}, \quad j \in \{20, 24, 30, 26, 30, 15, 20, 30, 25\},
$$
and the standard deviation of the noise is $0.1$. The above set of Gaussian functions mimics the local observations of the tumors, i.e., it corresponds to the deterministic test functions $\xi_k$ in equation \eqref{eqn:linear-functional}. The prior distribution for $h$ is assumed as the Gaussian distribution 
$N(\mu, c_0^2)$ with $\mu=c_0=0.5$. Other settings are the same as in Test 1 (a), and we fix the sample size $M=800$ in all tests of Test 1 (b). 

In the following, we further investigate the numerical performance of the proposed method for different physical indexes $m$ and noise levels $\sigma$. In the upper panel of Table \ref{Test1b_table}, we let $m=40$ and test on different  $\sigma$; in the lower panel, we fix $\sigma=0.25$ and make $m$ vary. To help interpret the numerical results,  we plot in Fig. \ref{Test1b_fig} the posterior distributions for different $\sigma$ while fixing $m=40$, and for different $m$  while fixing $\sigma=0.25$.

We observe that from the left panel of Fig. \ref{Test1b_fig} that as $\sigma$ decrease, the posterior distribution contacts to be more peaked while its center is moving towards the true value. And our numerical results give a faithful representation of such a contracting behavior of the posterior distribution: as the variance and the bias of the posterior decreases, the mean squared error of the estimator decreases accordingly.   

When the physical index $m$ changes, we observe from the right panel of Fig. \ref{Test1b_fig} that the posterior does not exhibit a clear trend, however, their profiles do not differ much either. Such an observation confirms our analysis of the convergence behavior of the posterior distributions, and our numerical results also show comparable accuracy although the observation data are actually different for those models. Recall that, given the unknown the forward models generate different results even in the absence of noise. In addition, we have only assumed that the noises added to these models share the same statistical properties.

\begin{table}[htbp]
\begin{subtable}
\centering
\begin{tabular}{c c c c c}
\hline
$\sigma$ & 0.125 & 0.25  &  0.5  &  1   \\
$\text{MSE}(h)$  & 0.00033  & 0.00129  & 0.00591  & 0.02872 \\
\hline
\end{tabular}
\end{subtable}
\hfill
\begin{subtable}
\centering
\begin{tabular}{c c c c c}
\hline
$\text{m}$  &  8  & 16  & 32  & 64   \\
$\text{MSE}(h)$  & 0.0024 & 0.0016 & 0.0010 & 0.0013 \\
\hline
\end{tabular}
\end{subtable}
\caption{Test 1 (b). Errors for different $\sigma$ and $m$. }
\label{Test1b_table}
\end{table}

\begin{figure}[H]
	\centering
	\includegraphics[width=0.5\textwidth]{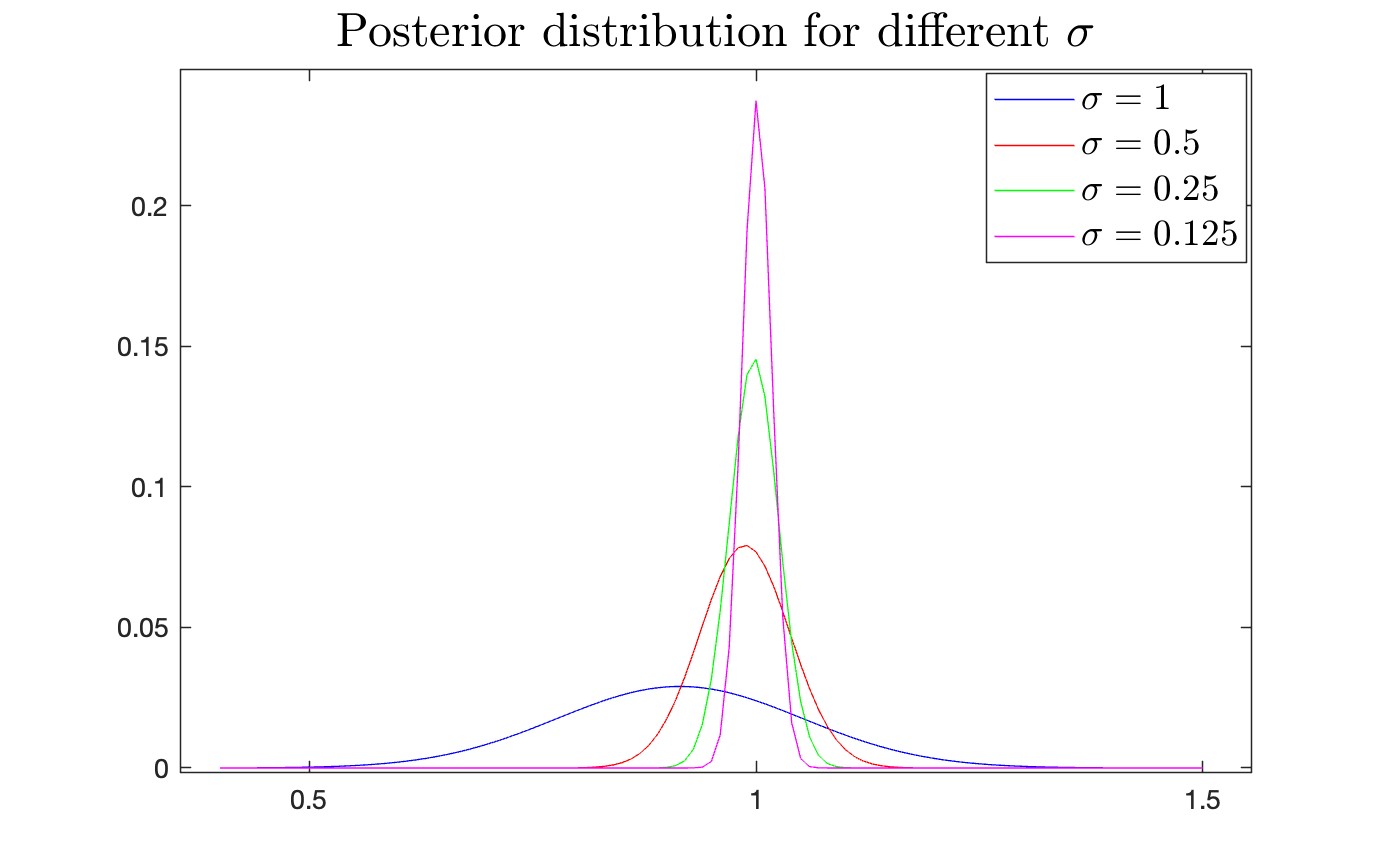}
    \includegraphics[width=0.42\textwidth]{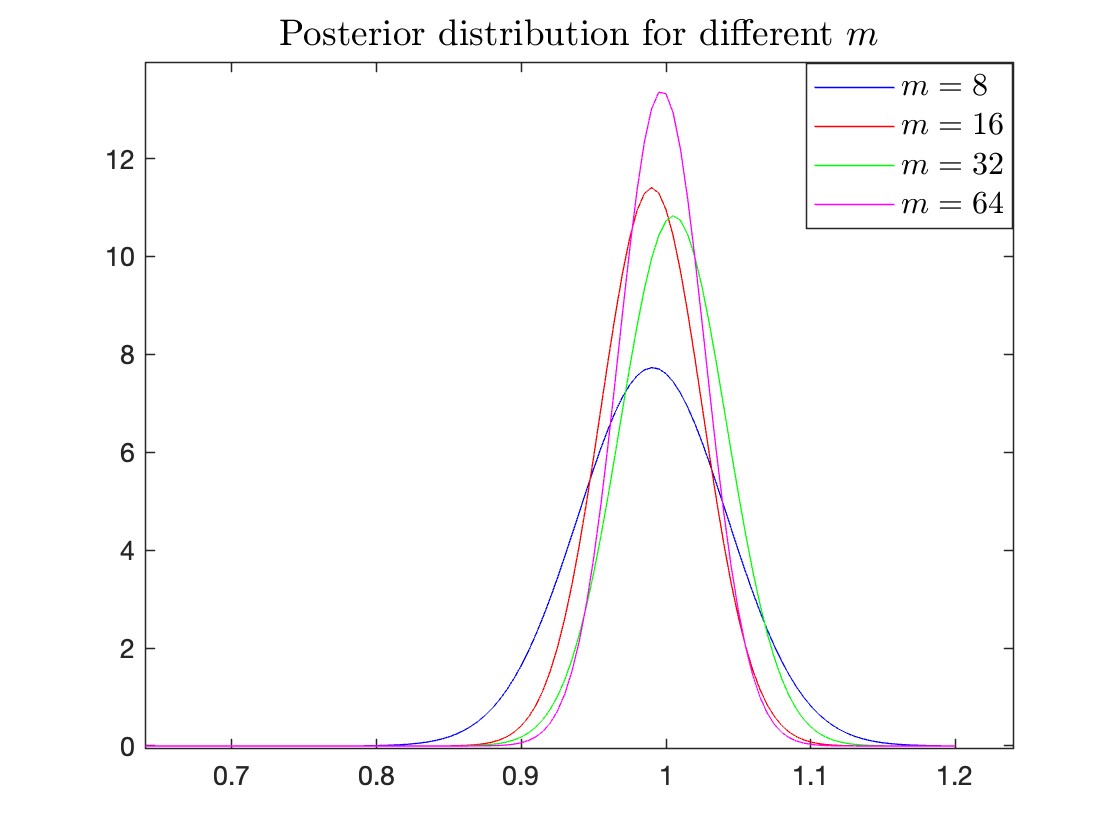}
\caption{Test 1 (b). Posterior distributions of $h$ for different $\sigma$ (fix $m=40$) and different $m$ (fix $\sigma=0.25$). }
\label{Test1b_fig}
\end{figure}

\subsubsection{Test 2}

In Test 2, we consider multi-dimensional random parameters that contain the constant growth rate $h$ and spatial centers of the initial density $c_1$, $c_2$. Let the initial data given by 
\begin{equation}
\rho(x,y,0) = \begin{cases}
                  0.9, & \sqrt{(x-c_1)^2 + (y-c_2)^2} - 0.5 - 0.5 \sin(4\arctan(\dfrac{y-c_2}{x-c_1})) <0,   \\[4pt]
                  0, & \text{otherwise}, 
                  \end{cases} 
\end{equation}
for $(x,y)\in\Omega$. For the prior distributions, we assume the constant growth rate $h$ follow the uniform distribution on $[0.5,0.8]$, while $c_1$ and $c_2$ follow the uniform distribution on $[-0.5, 0.5]$. Let the underlying true data $h^{\ast}=0.6$, $c_1^{\ast}=0.2$, $c_2^{\ast}=-0.3$, and the observation data be the density obtained at final time $T=0.5$, added by the Gaussian noise. In Test 2, we let $M=600$ and $m=40$ unless otherwise specified. 

Note that in this case, the sampling space is three-dimensional, and we can no longer expect the posterior distributions to have simple asymptotic behavior as $\sigma$ or $m$ varies. But still, our results below show that we are able to obtain accurate results for a large range of parameter combinations.  

In the upper panel of Table \ref{Test2_table}, we fix $m=40$ and vary $\sigma$; in the lower panel, we set $\sigma=0.1$ and let $m$ change. In both cases, the mean square errors for $h$ and $c_1$, $c_2$ all remain at the level of $O(10^{-3})$ to $O(10^{-2})$. A similar conclusion can be drawn as before: our algorithm is uniformly accurate with respect to both $\sigma$ and $m$.

In Fig. \ref{Test2_fig}, we plot the histogram of posterior samples for parameters $h$ and $c_1$. One can notice that with a finite noise level $\sigma$, the ``center'' of the distribution for the posterior samples may {\it not} be close to the underlying true data which is given by $h^{\ast}=0.6$ and $c_1^{\ast}=0.2$. Comparing the two examples with $\sigma=0.5$ and $\sigma=0.02$, one can observe that the smaller the $\sigma$ is, the closer and more concentrated the samples are towards the true data for $h$ and $c_1$. 

\begin{table}[H]
\begin{subtable}
\centering
\begin{tabular}{c c c c c c}
\hline
$\sigma$ &  0.0625  & 0.125 & 0.25 & 0.5  & 1 \\
$\text{MSE}(h)$  &  0.0088 &  0.0116  &  0.0236  &  0.0131 & 0.0138 \\ 
$\text{MSE}(c_1)$  &  0.0128 &  0.0157  &  0.0295  &  0.0540 & 0.0432 \\ 
$\text{MSE}(c_2)$  & 0.0039 &  0.0026  &  0.0106  &  0.0476 & 0.0176 \\  
\hline
\end{tabular}
\end{subtable}
\hfill
\begin{subtable}
\centering
\begin{tabular}{c c c c c}
\hline
$\text{m}$  &  8  & 16  & 32  & 64   \\
$\text{MSE}(h)$  &  0.0028 & 0.0039 & 0.0108 & 0.0084 \\
$\text{MSE}(c_1)$  & 0.0262 & 0.0428 & 0.0069 & 0.0460  \\
$\text{MSE}(c_2)$ & 0.0153 & 0.0117 & 0.0523 & 0.0058  \\
\hline
\end{tabular}
\end{subtable}
\caption{Test 2. Errors for different $\sigma$ and $m$. 
}
\label{Test2_table}
\end{table}

\begin{figure}[htbp]
	\centering
	\includegraphics[width=0.48\textwidth]{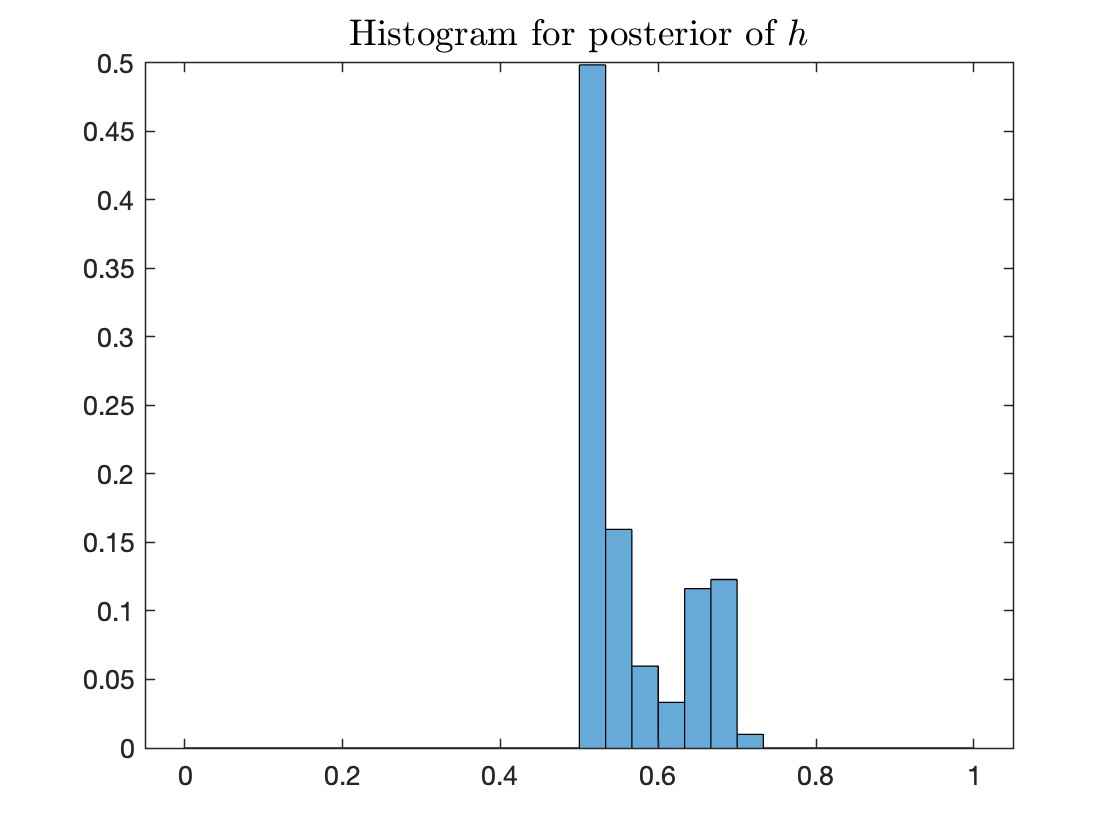}
	\includegraphics[width=0.48\textwidth]{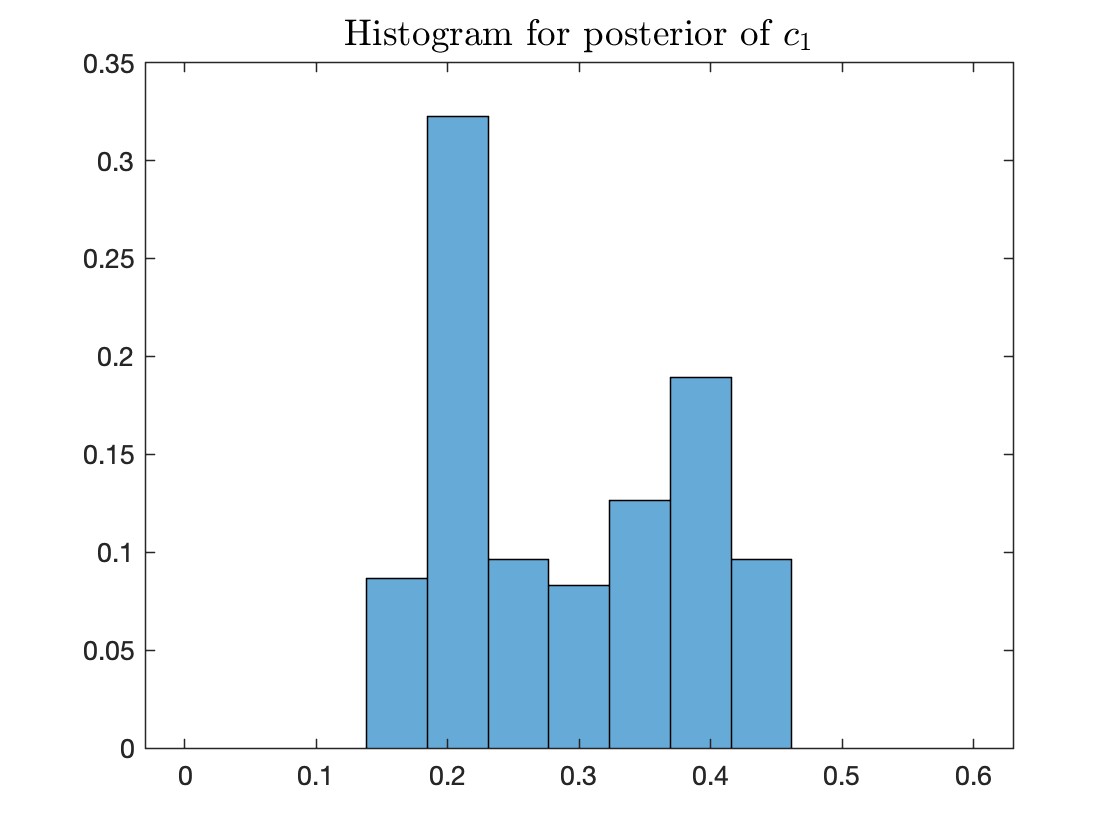}
 \includegraphics[width=0.48\textwidth]{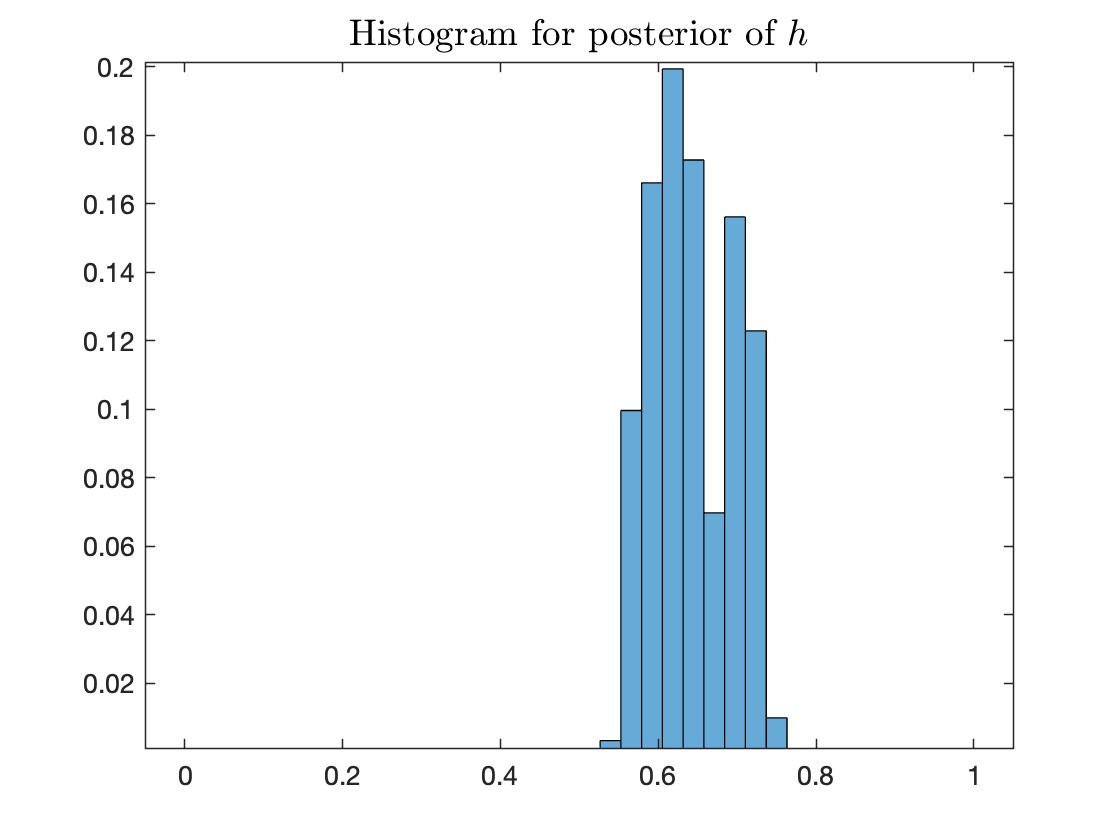}
	\includegraphics[width=0.48\textwidth]{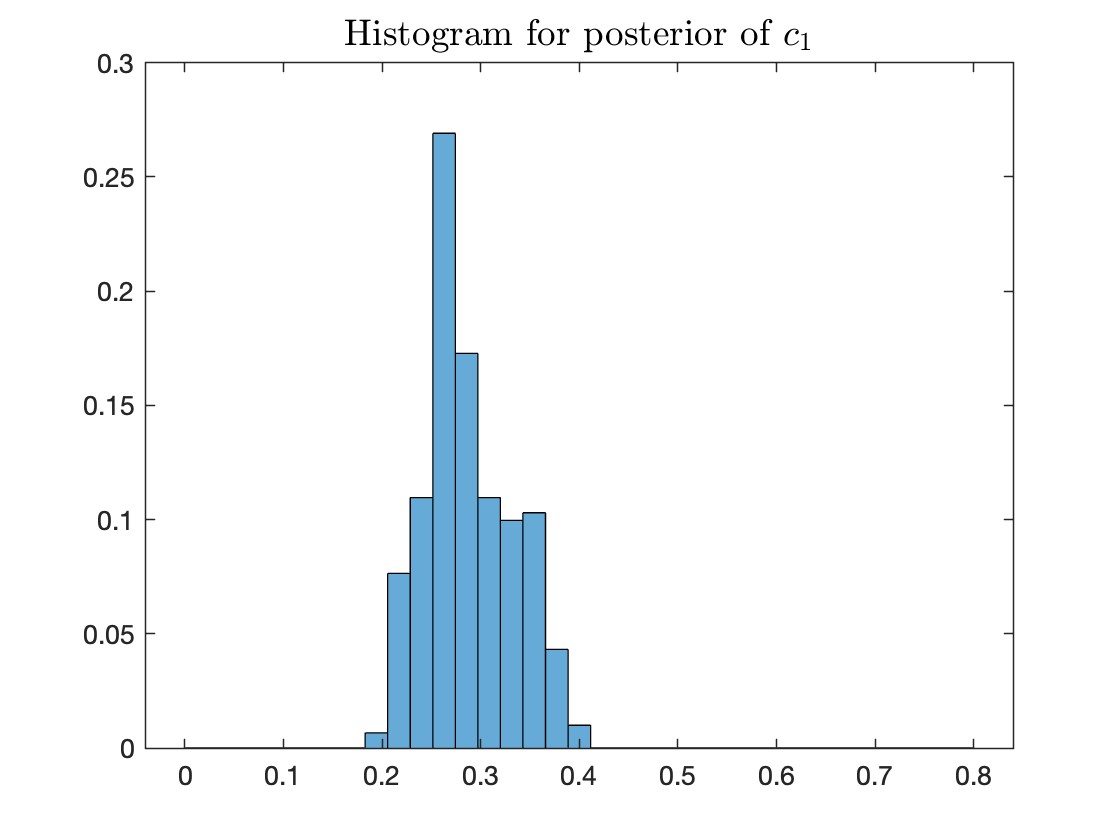}
	\caption{Test 2 with $\sigma = 0.5$ (top) and $\sigma=0.02$ (bottom). Histogram for posterior samples of $h$ and $c_1$.}
	\label{Test2_fig}
\end{figure}
	
\subsubsection{Test 3}

In Test 3, we consider the case when the growth function $h$ is spatially dependent and owns the truncated form of \eqref{eqn: h(x) representation} given by 
$$ h(\mathbf{x}) = h_0(\mathbf{x}) + \sum_{i=1}^3 \gamma_i \zeta_i \phi_i(\mathbf{x}). $$
Let the observation data be the density at the final time $T=0.5$, added by the Gaussian noise. Let the computational domain be $\tilde\Omega = [-1,1]\times[-1,1]$, and $\mathbf{x} = (x, y)\in \tilde\Omega$. We set $h_0=2$ and the initial data given by 
\begin{equation}
\rho(x,y,0) = \begin{cases}
                  0.9, & x^2 + y^2 < 0.2,    \\[4pt]
                  0, & \text{otherwise}. 
                  \end{cases} 
\end{equation} 

In this set up, the first three eigenfunctions $\phi_i$ ($i=1,2,3$) are given by:
\[
\phi_1 (\mathbf{x})= \sin(\pi x_1), \quad \phi_2 (\mathbf{x})= \sin(\pi x_2),\quad \phi_3 (\mathbf{x})= \cos(\pi x_1)\cos(\pi x_1),
\]
and the corresponding eigenvalues are $\gamma_1 = \gamma_2= \frac{1}{\pi^2}$, $\gamma_3 = \frac{1}{2 \pi^2}$. Then we define $g_i := \gamma_i \zeta_i$ and consider $g_i$ as the random variables. More specific, we let the true data for $\zeta$ be $\zeta = (0.8, 0.5, 0.3)$, thus the true data for $g=(0.0811, 0.0507, 0.0152)$, and we assume the prior distribution for $g_i$ follow the Gaussian $N(0, c_i^2)$ with $c_1=0.4$, $c_2=0.3$ and $c_3=0.2$.

In this test, since $h(\mathbf{x})$ is spatially dependent, we approximate the expected value and relative mean squared error by using the following formulas: 
$$ \mathbb{E}(\bar h(\mathbf{x})) \approx \frac{1}{K} \sum_{k=1}^K \bar{h}^{(k)}(\mathbf{x}) = \frac{1}{K}\frac{1}{N} \sum_{k=1}^K  \sum_{i=1}^N  h_i^{(k)}(\mathbf{x}), $$
$$ \text{MSE}: = \mathbb{E} \left[ \|\bar h(\mathbf{x}) - h^{\ast}(\mathbf{x})\|_{L^2}^2\right] \approx \frac{1}{K}\sum_{k=1}^K \|\bar h^{(k)}(\mathbf{x}) - h^{\ast}(\mathbf{x})\|_{L^2}^2, $$
where $h^{\ast}(\mathbf{x})$ is the true data for $h(\mathbf{x})$, shown on the left-hand-side of Fig.\ref{Test3_fig1}, and $\bar h^{(k)}$ is defined in \eqref{G_bar}. In all tests of Test 3, we let the sample size $M=500$.

\begin{table}[htbp]
\begin{subtable}
\centering
\begin{tabular}{c c c c c}
\hline
$\sigma$ & 0.125 & 0.25 & 0.5 & 1 \\
$\text{MSE}(h)$  & 0.0064 & 0.0081 & 0.0076 & 0.0073 \\
\hline
\end{tabular}
\end{subtable}
\hfill
\begin{subtable}
\centering
\begin{tabular}{c c c c c}
\hline
$\text{m}$  &  8  & 16  & 32  & 64   \\
$\text{MSE}(h)$  & 0.0056 & 0.0045  & 0.0034  & 0.0062 \\
\hline
\end{tabular} 
 \end{subtable}
\caption{Test 3. Mean square errors of posterior $h$ for different $\sigma$ and $m$.
 }
\label{Test3_table}
\end{table}

\begin{figure}[h]
	\centering
	\includegraphics[width=0.49\textwidth]{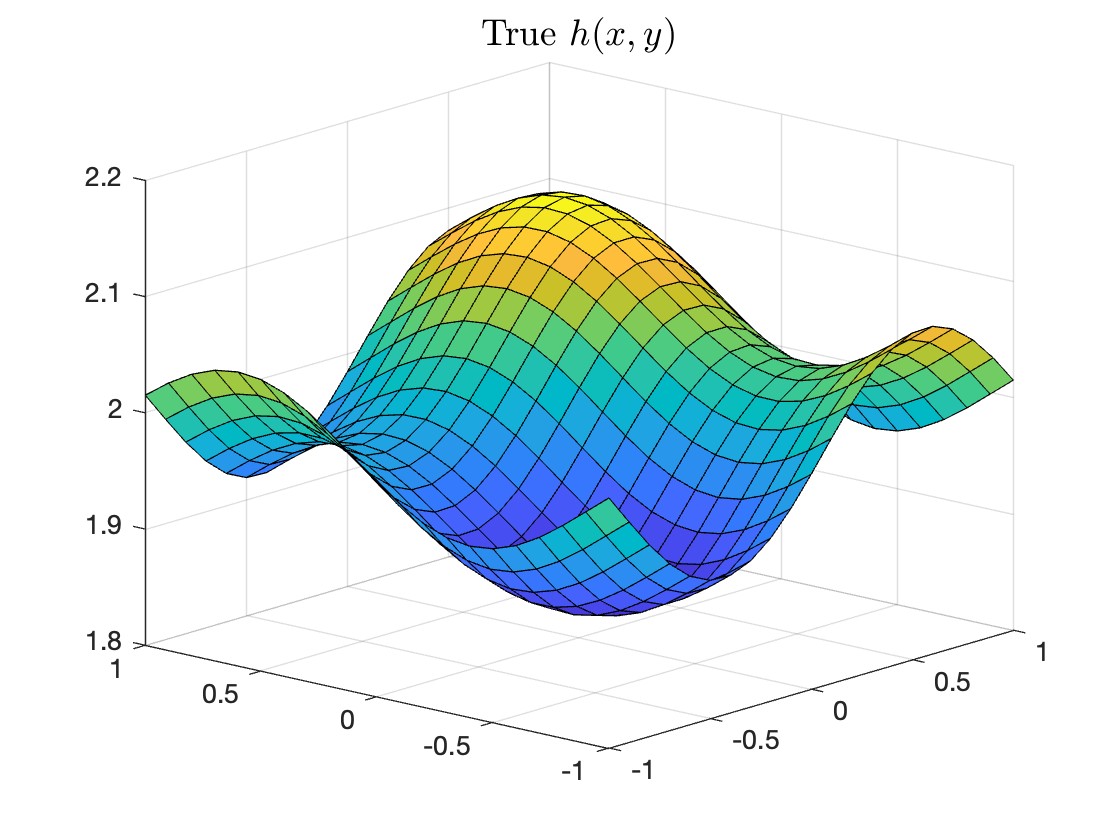}
	\includegraphics[width=0.49\textwidth]{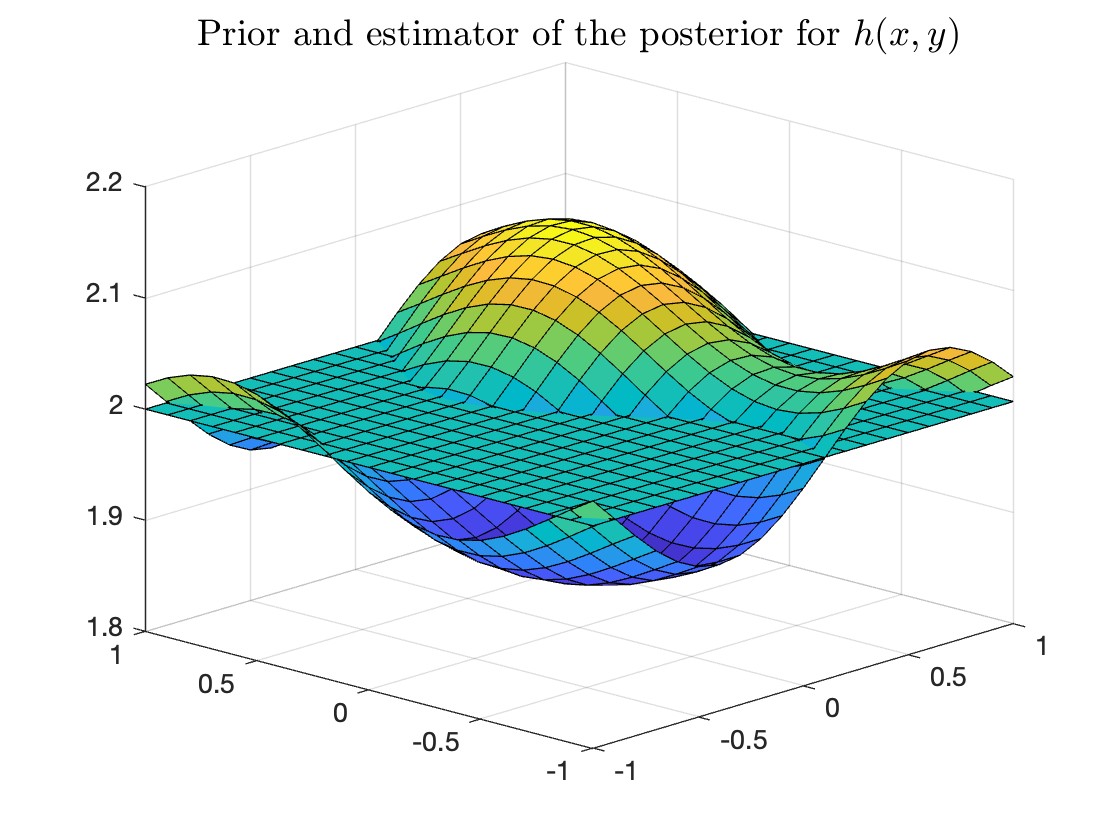}
\caption{ Test 3 with $\sigma=0.25$ and $m=40$. True $h(x,y)$ function, the prior mean (which is $h_0=2$ at all points), and the estimator of the posterior mean of $h(x,y)$ computed pointwisely at each $(x,y)$ point. } 
\label{Test3_fig1}
\end{figure}

\begin{figure}[!h]
\centering
 \includegraphics[width=0.48\textwidth]{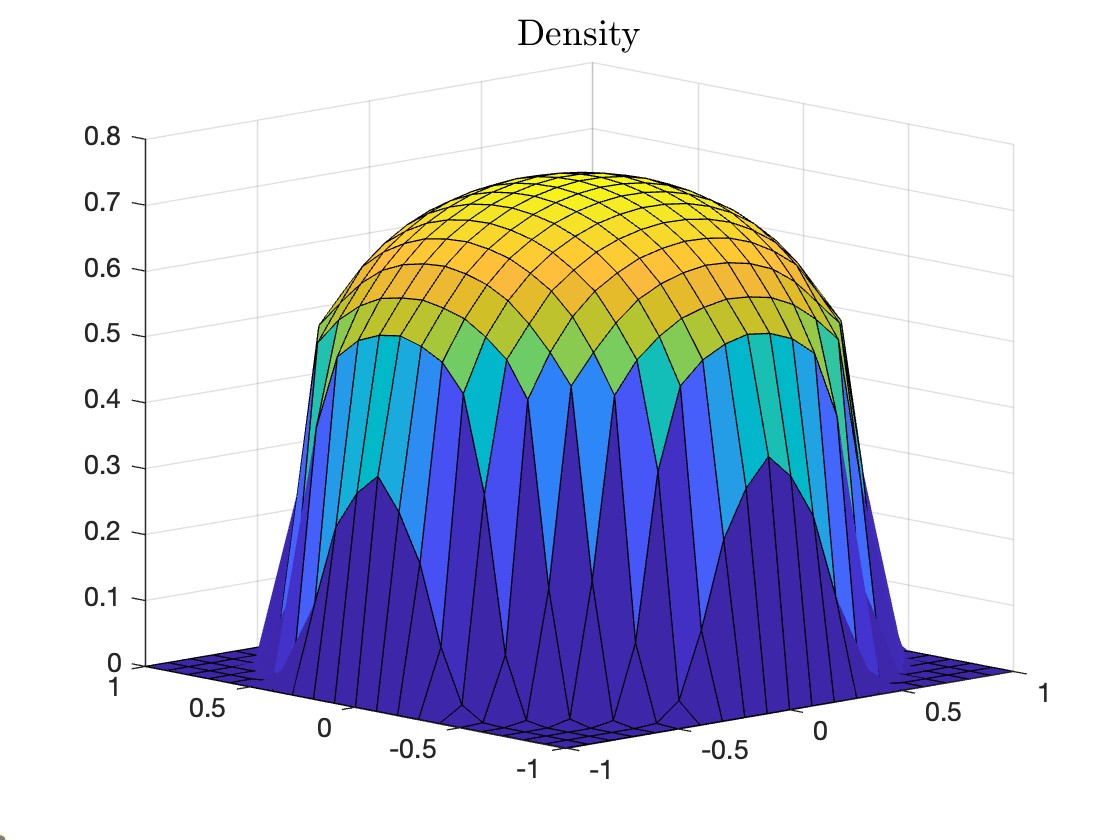}
 \includegraphics[width=0.48\textwidth]{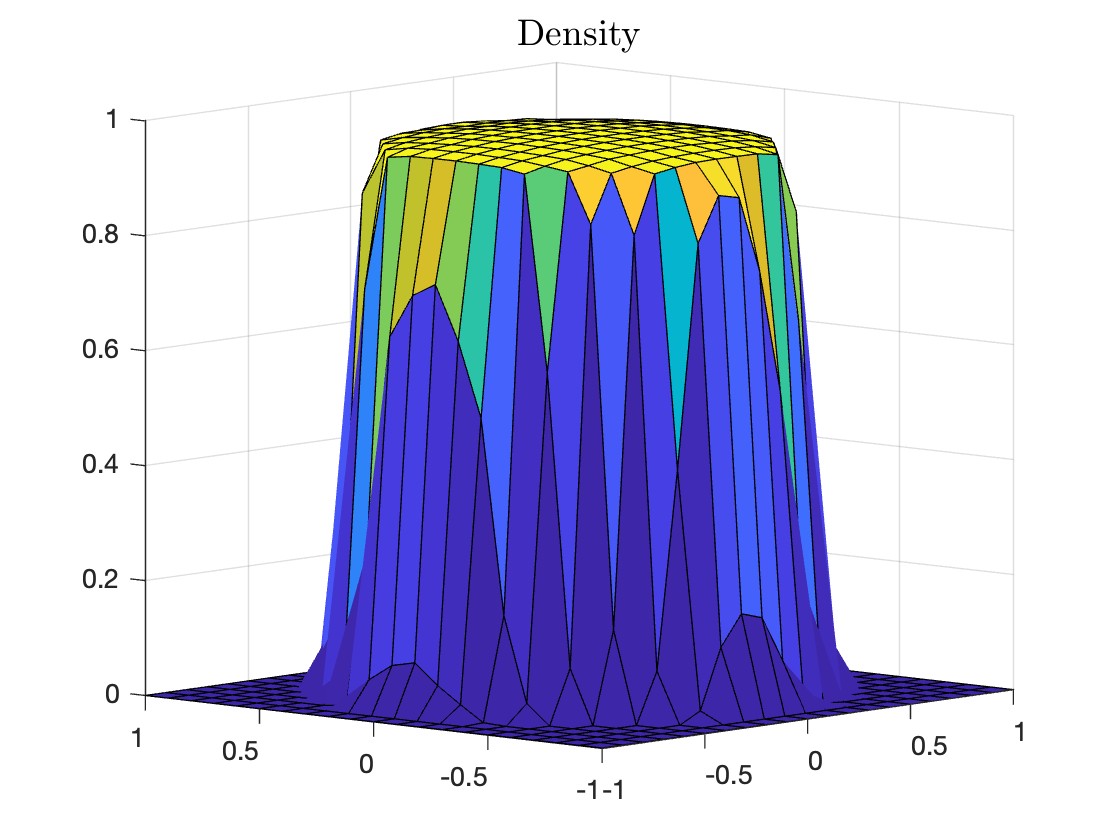}
  \includegraphics[width=0.48\textwidth]{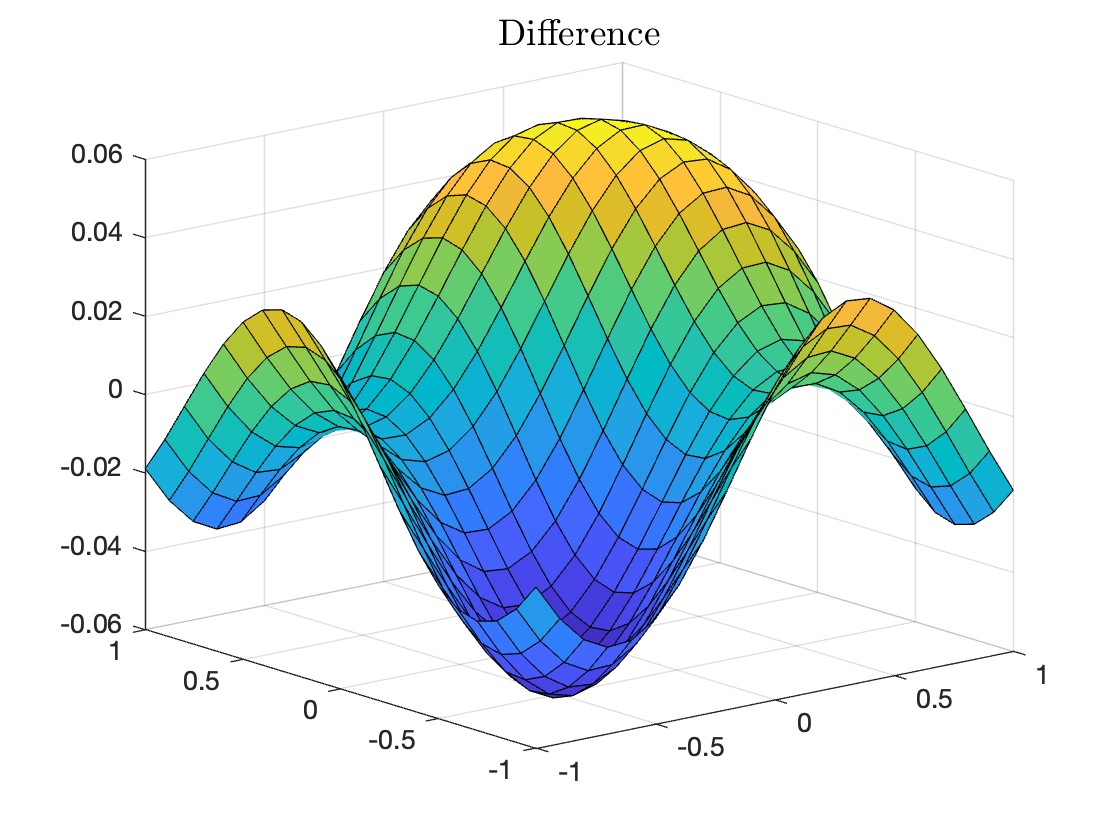}
 \includegraphics[width=0.48\textwidth]{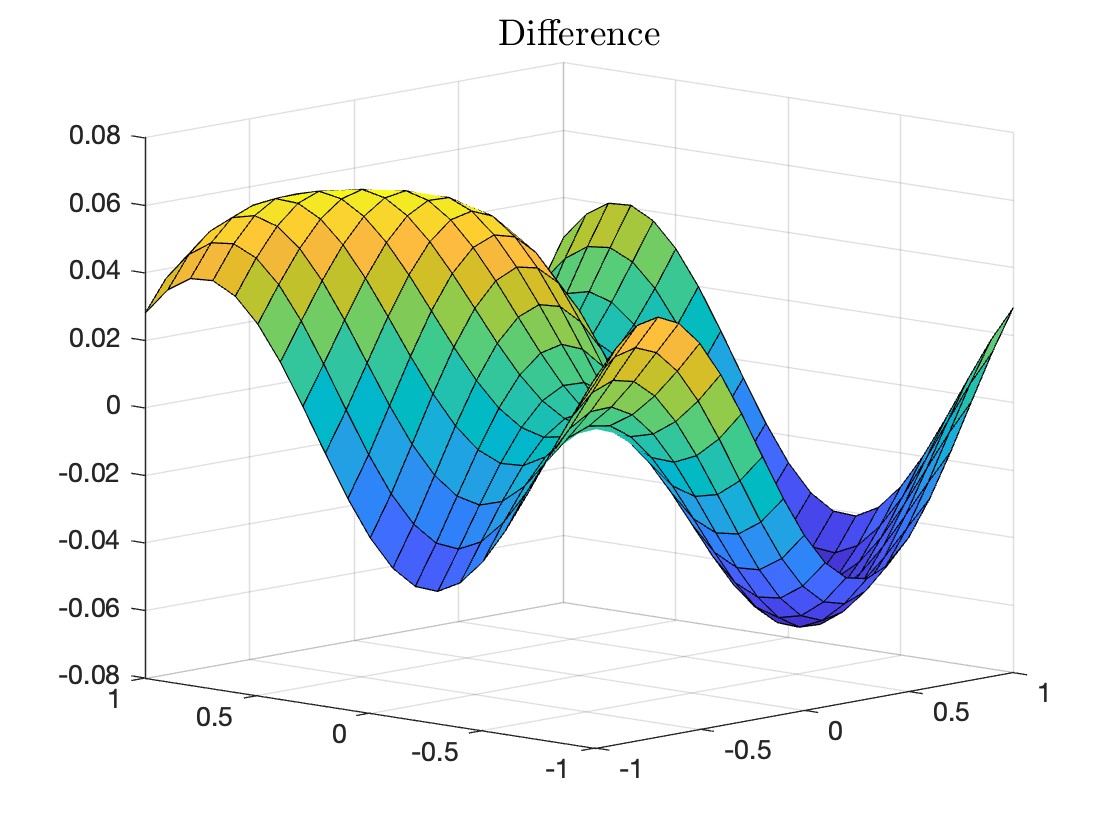}
 \caption{Test 3 with $\sigma=0.5$. Density is computed by using the posterior mean of $h(x,y)$, where $m=5$ (left) and $m=50$ (right). The second row shows the corresponding difference between the posterior mean of $h(x,y)$ and the true function. }
 \label{Test3_fig2}
\end{figure}

\begin{figure}[!h]
\centering
 \includegraphics[width=0.32\textwidth]{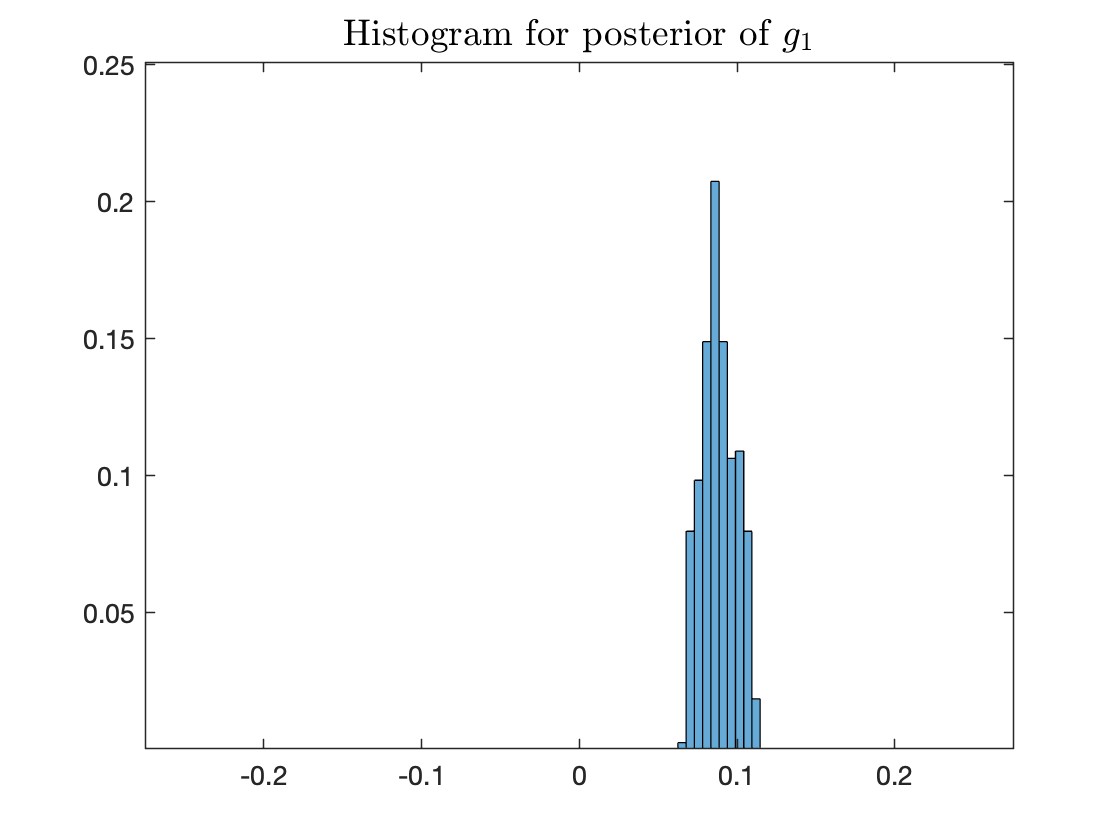}
\includegraphics[width=0.32\textwidth]{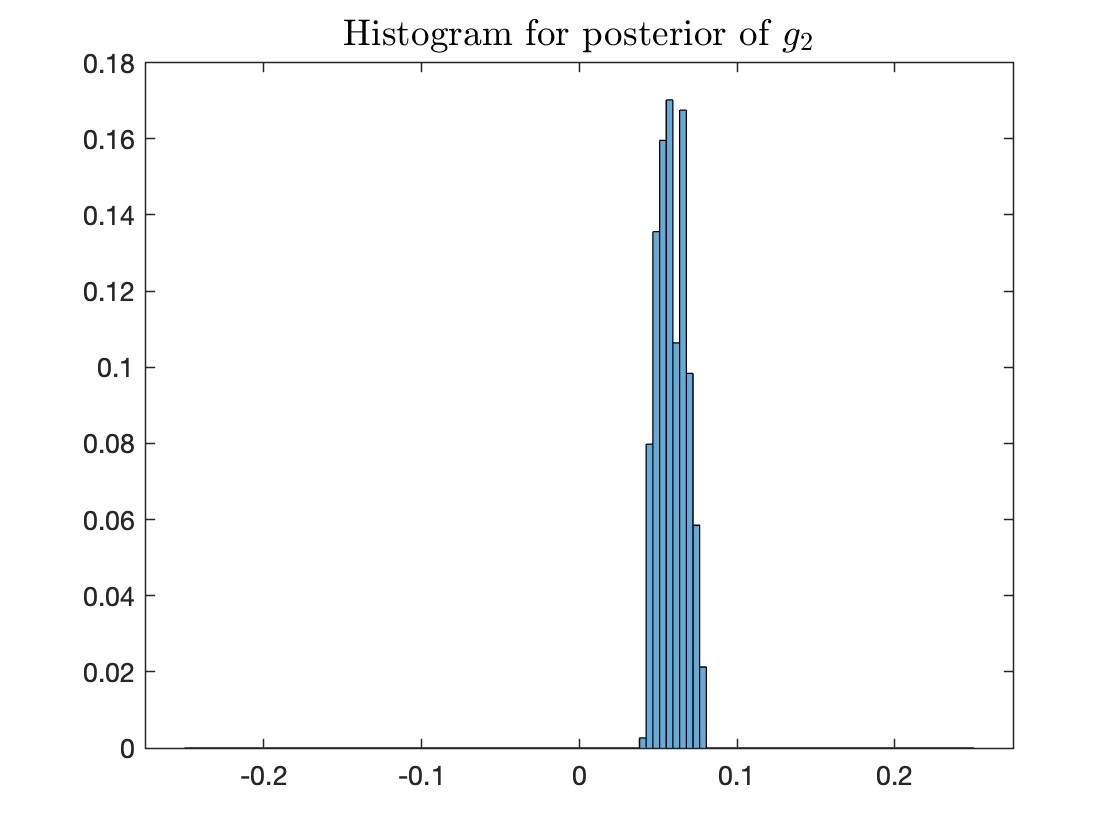}
\includegraphics[width=0.32\textwidth]{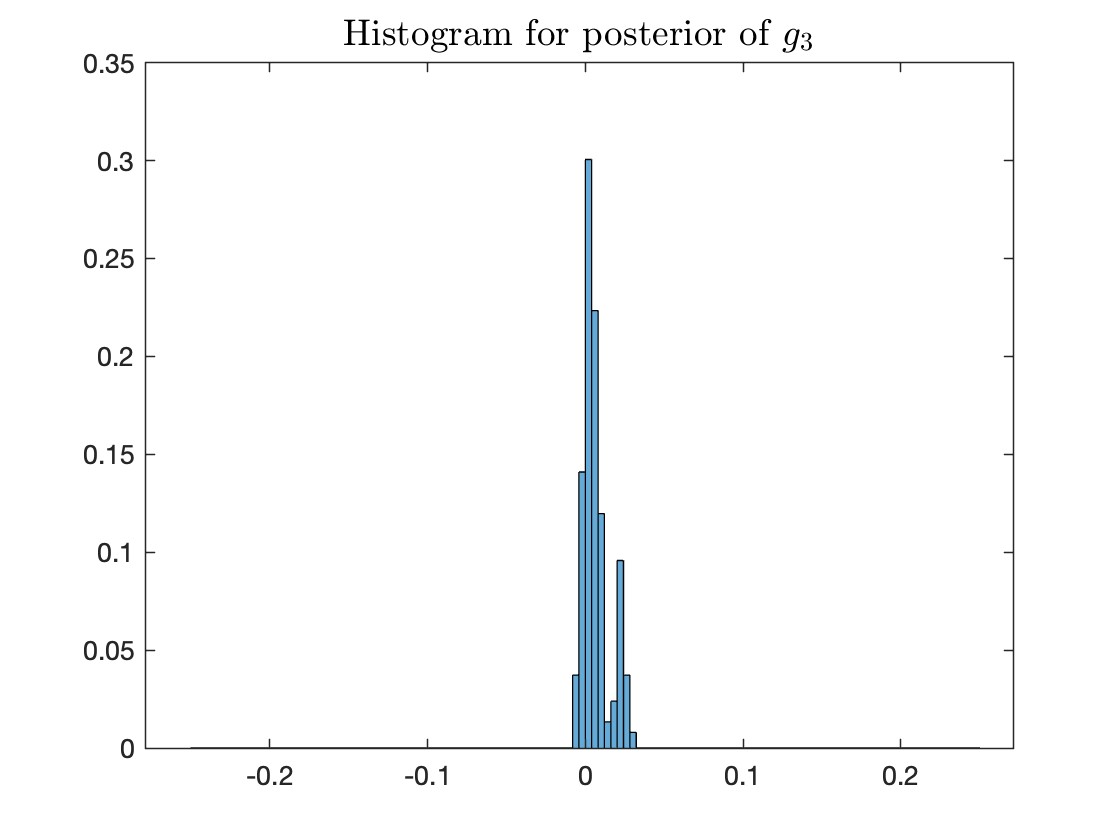}
\caption{ Test 3 with $\sigma=0.125$ and $m=16$. Posterior distributions for $g=(g_1,g_2,g_3)$, compared with the true data $g=(0.0811, 0.0507, 0.0152)$. 
}
 \label{Test3_fig3}
\end{figure}
  
In the upper panel of Table \ref{Test3_table}, we fix $m=40$ and change $\sigma$; in the lower panel, we set $\sigma=0.125$ and let $m$ change. One can observe a uniform accuracy in both cases of varying $m$ and $\sigma$, since the mean square errors remain at the level of as small as $O(10^{-3})$. 

In Fig. \ref{Test3_fig1}, for Test 3 with $\sigma=0.25$ and $m=40$, on the left we plot the true $h(x,y)$ function; on the right we compare the prior and posterior means of $h(x,y)$ which are computed pointwisely at each mesh point $(x,y)$ in the domain. In Fig. \ref{Test3_fig2}, for different choices of $m$ ($m=5$ or 50), we plot the density solution at time $T=0.5$, by using the posterior mean of $h(x,y)$ at each position $(x,y)\in\Omega$. We observe that, with different pressure laws indexed by $m$, the density profiles, as well as their free boundaries, show noticeable discrepancies. However, our numerical method generates accurate inferences of the growth rate functions in both cases as they also deviate from the true data by as small as $O(10^{-2})$ shown in the lower panels of Fig. \ref{Test3_fig2}. 

In Fig. \ref{Test3_fig3}, for Test 3 with $\sigma=0.125$ and $m=16$, we plot the histogram of posterior samples for the parameter $g=(g_1,g_2,g_3)$, comparing with its true data of $g=(0.0811, 0.0507, 0.0152)$. We notice that with a finite noise level $\sigma$ and $m$, the ``centers" of the distribution for the posterior samples of $g$ may not be exactly close to the growth true data, but are acceptably concentrated in a small neighborhood near the true data. 

\section{Conclusion and future work}
In this paper, we investigate the data assimilation problem for a family of tumor growth models that are represented by porous-medium type equations, which is indexed by a physical parameter $m\in[2,\infty]$ characterizing the constitutive relation between the pressure and density. We employ the Bayesian framework to infer parametric and non-

parametric unknowns that affect tumor growth from noisy observations of tumor cell density. We establish the well-posedness and stability theories for the whole family of Bayesian inversion problems. Additionally, to guarantee the posterior has unified behavior concerning the constitutive relations, we further prove the convergence of the posterior distribution in the limit referred to as the incompressible limit, $m \rightarrow \infty$. These theoretical findings guide us in the development of the numerical inference method for the unknowns. We propose a general computational framework for such inverse problems, which encompasses a typical sampling algorithm and an asymptotic preserving solver for the forward problem. We verify through extensive numerical experiments that our proposed framework provides satisfactory and unified accuracy in the Bayesian inference of the family of tumor growth models. 

Finally, we conclude our paper by outlining potential directions for future research. We propose that at least three worthwhile directions merit further exploration. Firstly, we will further employ the real experimental data like that in \cite{falco2023quantifying} for the data assimilation problems of such tumor growth models. Secondly, in this paper, $m$ is assumed to be a known parameter, but it remains interesting to explore the possibility of inferring the index $m$ as well as other unknowns in the model. Thirdly, we may study the Bayesian inversion for other problems that possess nontrivial asymptotic limits. We save these topics for future studies.

\section*{Acknowledgments}
The work of Y.F. is supported by the National Key R\&D Program of China, Project Number 2021YFA1001200. The work of L.L. is supported by the National Key R\&D Program
of China, Project Number 2021YFA1001200, the start-up grant of CUHK, Early Career Scheme (No. 24301021) and General Research Fund (No. 14303022 \& 14301423) by Research Grants Council of Hong Kong from 2021-2023. The work of Z.Z. is supported by the National Key R\&D Program of China, Project Number 2021YFA1001200, and NSFC grant number 12031013, 12171013.  We thank Xu'an Dou for the help in numerical simulations, and Min Tang for the helpful discussions. 

\section*{Appendix} \label{sec:appendix}
We give a summary of the numerical discretization studied in \cite[Section 3]{jianguo2018}. A time-splitting method based on prediction-correction is proposed: 
\begin{equation}
\label{pc_system}
\begin{split}
\left\{\begin{array}{l} 
{ \partial_{t} \rho + \nabla\cdot(\rho \mathbf{u}) = \rho G(c) , } \\[4pt]
{ \partial_{t} u = m \nabla(\rho^{m-2}(\nabla\cdot (\rho\mathbf{u}) - \rho G(c))), }
\end{array} \quad 
\left\{\begin{array}{l}
\partial_t \rho=0 \\[4pt]
\partial_t \mathbf{u}=-\frac{1}{\varepsilon^2}\left(\mathbf{u}+\frac{m}{m-1} \nabla \rho^{m-1}\right) .
\end{array}\right.\right.
\end{split}
\end{equation}
Given $(\rho^n, \mathbf{u}^n)$, one solves the left system in \eqref{pc_system} for one time step and obtains the intermediate values 
$(\rho^{\ast}, \mathbf{u}^{\ast})$, then solve the second system in \eqref{pc_system} to get $(\rho^{n+1}, \mathbf{u}^{n+1})$. 

When $\varepsilon\to 0$, the second system in \eqref{pc_system} reduces to 
\begin{equation}
\partial_t \rho=0, \quad \mathbf{u}(x, t)=-\frac{m}{m-1} \nabla \rho^{m-1}(x, t). 
\end{equation}
In this projection step, notice that $\rho^{\ast} = \rho^{n+1}$. The time-splitting method for the fully relaxed system becomes 
\begin{equation}
\label{relax_system}
\begin{split}
\left\{\begin{array}{l}
\partial_t \rho+\nabla \cdot(\rho \mathbf{u})=\rho G(c), \\[4pt]
\partial_t \mathbf{u}=m \nabla\left(\rho^{m-2}(\nabla \cdot(\rho \mathbf{u})-\rho G(c))\right), 
\end{array}\right.
\quad \mathbf{u}(x, t)=-\frac{m}{m-1} \nabla \rho^{m-1}(x, t) .
\end{split}
\end{equation}
An implicit-explicit temporal discretization for the system \eqref{relax_system} is given as follows: 
\begin{equation}
\label{scheme_time}
\begin{aligned}
\frac{\mathbf{u}^{n*}-\mathbf{u}^n}{\Delta t} & =m \nabla\left(\left(\rho^n\right)^{m-2}\left(\nabla \cdot\left(\rho^n \mathbf{u}^{n *}\right)-\rho^n G\left(c^n, p\left(\rho^n\right)\right)\right)\right),  \\[4pt]
\frac{\rho^{n+1}-\rho^n}{\Delta t} & =-\nabla \cdot\left(\rho^n \mathbf{u}^{n *}\right)+\rho^{n+1} G\left(c^n, p\left(\rho_n\right)\right), \\[4pt]
\mathbf{u}^{n+1} & =-\frac{m}{m-1} \nabla\left(\rho^{n+1}\right)^{m-1}.
\end{aligned}
\end{equation}
Each of the equation above can be solved consecutively, which means that nonlinear solver is not needed in implementing the scheme. For the spatial discretization, we refer to \cite[Section 4]{jianguo2018} for details. 

In the 1D case, staggered grid for $u$ and regular grid for $\rho$ is used, namely
$$ \rho_i(t)=\frac{1}{\Delta x}\int_{x_{i-1/2}}^{x_{i+1/2}}\rho(x,t) dt, \qquad
u_{i+1/2}(t)=u(x_{i+1/2},t). $$
In the 1D case, the space discretization for $u^{n*}$ in \eqref{scheme_time} is by the centered finite difference method, 
\begin{equation}
\begin{aligned}
\frac{u_{i+1/2}^{n*} - u_{i+1/2}^n}{\Delta t} & = 
\frac{m}{\Delta x}\Big\{ (\rho_{i+1}^n)^{m-2}
\left( \frac{\rho_{i+3/2}^n u_{i+3/2}^{n *} - \rho_{i+1/2}^n u_{i+1/2}^{n *}}{\Delta x} - \rho_{i+1}^n G_i^n\right)  \\[4pt]
&\quad - (\rho_{i}^n)^{m-2}
\left( \frac{\rho_{i+1/2}^n u_{i+1/2}^{n *} - \rho_{i-1/2}^n u_{i-1/2}^{n *}}{\Delta x} - \rho_{i}^n G_i^n\right)
\Big\}, 
\end{aligned}
\end{equation}
where $G_i^n\approx G(x_i, n\Delta t)$ and the half grid values of $\rho$ are taken as 
$$ \rho_{i+1/2}^n = \frac{\rho_i^n + \rho_{i+1}^n}{2}. $$
In the second step of \eqref{scheme_time}, we use central scheme to discretize it. More specifically, 
$$ \frac{\rho_i^{n+1}-\rho_i^n}{\Delta t} + 
\frac{F_{i+1/2}^n - F_{i-1/2}^n}{\Delta x} = \rho_i^{n+1} G_i^n, $$
where the flux is given by
$$ F_{i\pm 1/2}^n = \frac{1}{2}\left[ \rho^{Ln} u^{n*} + \rho^{R n} u^{n*} - |u^{n*}| (\rho^{Rn} - \rho^{Ln})\right]_{i\pm 1/2}, $$
and $\rho_{i\pm 1/2}^{Ln}$ or $\rho_{i\pm 1/2}^{Rn}$
are edge values constructed as below. 
On the cell $[x_{i-1/2}, x_{i+1/2}]$, let 
$$ \rho_i^n(x) \approx \rho_i^n + (\partial_x\rho)_i^n (x-x_i). $$
At the interface $x_{i+1/2}$, the two approximations are given from the left or from the right, i.e., 
$$ \rho_{i+1/2}^{Ln} = \rho_i^n + \frac{\Delta x}{2}(\partial_x\rho)_i^n, \qquad \rho_{i+1/2}^{Rn} = \rho_{i+1}^n - \frac{\Delta x}{2}(\partial_x\rho)_{i+1}^n, $$
where $(\partial_x\rho)_i$ is computed by the minmod limiter \cite{CF12}. 
In the correction step of \eqref{scheme_time}, the centered difference approximation is employed, i.e., 
$$ u_{i+1/2}^{n+1} = -\frac{m}{m-1}\frac{(\rho_{i+1}^{n+1})^{m-1} - (\rho_{i}^{n+1})^{m-1}}{\Delta x}. $$

For the high-dimensional cases, the extension is straightforward and is thus omitted in this paper. Readers may refer to \cite{jianguo2018} for the explicit construction of the 2D schemes. 


\end{document}